\newtheorem{theorem}{Theorem}[section]
\newtheorem{proposition}[theorem]{Proposition}
\newtheorem{lemma}[theorem]{Lemma}
\newtheorem{corollary}[theorem]{Corollary}
\theoremstyle{definition}
\newtheorem{definition}[theorem]{Definition}
\newtheorem{remark}[theorem]{Remark}
\newtheorem{example}[theorem]{Example}
\newcounter{RomanNumber}
\newcommand{\MyRoman}[1]{\setcounter{RomanNumber}{#1}\Roman{RomanNumber}}
\newcommand{\conn}{\ensuremath{\#}}
\newcounter{bean}
\newcommand{\namedright}[3]{\ensuremath{#1\stackrel{#2}
 {\longrightarrow}#3}}
\newcommand{\nameddright}[5]{\ensuremath{#1\stackrel{#2}
 {\longrightarrow}#3\stackrel{#4}{\longrightarrow}#5}}
\newcommand{\namedddright}[7]{\ensuremath{#1\stackrel{#2}
 {\longrightarrow}#3\stackrel{#4}{\longrightarrow}#5
  \stackrel{#6}{\longrightarrow}#7}}
\newcommand{\larrow}{\relbar\!\!\relbar\!\!\rightarrow}
\newcommand{\llarrow}{\relbar\!\!\relbar\!\!\larrow}
\newcommand{\lnameddright}[5]{\ensuremath{#1\stackrel{#2}
 {\larrow}#3\stackrel{#4}{\larrow}#5}}
\newcommand{\llnameddright}[5]{\ensuremath{#1\stackrel{#2}
 {\llarrow}#3\stackrel{#4}{\llarrow}#5}}
\newcommand{\qqed}{\hfill\Box}
\begin{document}
\begin{sloppypar}

\title{Homotopy of blow ups after looping} 

\author{Ruizhi Huang} 
\address{Institute of Mathematics, Academy of Mathematics and Systems Science, 
 Chinese Academy of Sciences, Beijing 100190, China} 
\email{huangrz@amss.ac.cn} 
   \urladdr{https://sites.google.com/site/hrzsea/}
   \thanks{}
   
\author{Stephen Theriault}
\address{School of Mathematics, University of Southampton, Southampton 
   SO17 1BJ, United Kingdom}
\email{S.D.Theriault@soton.ac.uk}

\subjclass[2010]{Primary 
55P35, 
57R22, 
57R19,  
14F35;  
Secondary 
55Q52, 
57R65,  
55Q50,  
55P40,  
55P62. 
}
\keywords{blow up, surgery, fibrewise homotopy, loop space, gauge group}
\date{}


\begin{abstract} 
The homotopy theory of the blow up construction in algebraic and symplectic geometry is 
investigated via two approaches. The first approach introduces and develops fibrewise 
surgery theory, for which the fibrewise framing is characterized by the homotopy groups 
of a certain gauge group. This is used to obtain a homotopy decomposition of the based loop 
space on a blow up that holds $p$-locally for all but finitely many primes $p$ and holds rationally. 
The second approach is purely homotopy theoretic and obtains a homotopy decomposition of the based 
loop space on a blow up that holds integrally provided a certain condition is satisfied by an associated 
homotopy action. 

As applications, we obtain $p$-local homotopy decompositions of the based loop 
space of a focal genus $2$ manifold, improve an earlier result of the authors on the homotopy of 
manifolds stabilized by projective spaces, and obtain for blow-ups a refinement of 
the rational dichotomy.
\end{abstract}

\maketitle
\tableofcontents

\section{Introduction}
The blow up construction is fundamental to both complex algebraic geometry and symplectic geometry (Gromov \cite{Gro} and McDuff \cite{Mc}). 
Via Sullivan's rational homotopy theory \cite{Sul}, it is particularly useful for constructing non-K\"{a}hler symplectic manifolds \cite{Thu, BaT, RT, TO}. While the rational homotopy type of blow ups has been systematically studied in depth by Lambrechts and Stanley \cite{LS1, LS2, LS3}, the local homotopy type of blow ups has received little attention.

In this paper, we study the local homotopy of blow ups by mixing techniques from geometric topology and unstable homotopy theory. 
To explain our main results, let us fix some initial data. 
Following \cite{LS3}, we work in a general context which is sufficient for defining the blow up construction.
Let $B\hookrightarrow N$ be a codimension $2n$ embedding of connected closed oriented smooth manifolds. 
Suppose the normal bundle $\nu$ of the embedding has been given a complex structure. 
As in \cite{Mc}, \cite[Section~2]{LS3}, or Section \ref{sec: blowup} below, we can define the {\it blow up} 
$\widetilde{N}$ of $N$ along $B$ as follows. Let $V$ be a closed tubular neighborhood of $B$ in $N$ 
diffeomorphic to the disk bundle $D\nu$ of $\nu$; $V$ and this disc bundle shall usually not be distinguished. 
Let $N_c=\overline{N\backslash V}$ be the closure of the complement of $V$ in $N$, and notice that 
$\partial N_c=\partial V$. The manifold $N$ is the pushout of the inclusions 
\(\namedright{\partial V}{}{V}\) 
and 
\(\namedright{\partial V}{}{N_{c}}\). 
Replacing the inclusion 
\(\namedright{\partial V}{}{V\cong D\nu}\) 
by the projectivization 
\(\namedright{\partial V}{}{P\nu}\), 
a topological interpretation of the blow up $\widetilde{N}$ is as the pushout of 
\(\namedright{\partial V}{}{P\nu}\) 
and 
\(\namedright{\partial V}{}{N_{c}}\).
\medskip 

\noindent 
\textbf{Results via surgery in Part \ref{part1}}.  

Our first result concerns the local homotopy of blow ups in the stable range. This is done by introducing fibrewise surgery theory and mixing it with Cube Lemma techniques in homotopy theory. We state the result first and discuss the methodology after seeing applications and examples.  
To ensure the existence of localization, we assume that the manifolds involved $N$, $B$, $N_c$, $\partial V$ and $\widetilde{N}$ are all connected nilpotent spaces. In particular, this is the case when~$N$ and $B$ are simply connected and $n\geq 2$ as is shown in Lemma \ref{nillemma}. 
For any $CW$-complex~$X$, let~$\Sigma^i X$ be the $i$-fold suspension of~$X$ and let $\Omega X$ be its based loop space. 
\begin{theorem}\label{loopblowupthmintro}
Let $N$ be a connected closed oriented smooth manifold. 
Let $\widetilde{N}$ be the blow up of $N$ along a $(k-1)$-connected $l$-dimensional submanifold $B$ with a normal bundle of complex dimension~$n$.
Suppose that $l\leq 2n-4$ and $k\geq 1$. 
Let $p$ be a prime such that $p>\frac{1}{2}(l-k)+1$, $(p-1) \nmid 2s$ for any $k+2\leq 4s \leq l+2$, and $H_\ast(B;\mathbb{Z})$ is $p$-torsion free. 
Then there is a $p$-local homotopy equivalence 
\[
\Omega \widetilde{N}\simeq S^1\times \Omega N_c\times \Omega \Sigma^2 F,
\]
where $F$ is the homotopy fibre of the inclusion $ \partial V\hookrightarrow N_c$ of the boundary.
\end{theorem}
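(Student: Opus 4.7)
\emph{Plan.} I would work with the pushout $\widetilde N \simeq P\nu \cup_{\partial V} N_c$, in which $\partial V \to P\nu$ is the principal $S^1$-bundle underlying the sphere-to-projective bundle map, and extract the three factors in the decomposition one at a time.

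\emph{Step 1: the $S^1$ factor.} The classifying map $P\nu \to BS^1 = \mathbb{CP}^\infty$ of the principal bundle $\partial V \to P\nu$ becomes null when restricted to $\partial V$, since the pullback of the bundle to $\partial V$ is trivialised by the diagonal. The pushout property therefore extends it to a map $\varphi\colon \widetilde N \to \mathbb{CP}^\infty$ whose restriction to $N_c$ is null. A $\mathbb{CP}^1$-fibre of $P\nu \to B$ gives an element of $\pi_2(\widetilde N)$ mapping to a generator of $\pi_2(\mathbb{CP}^\infty) = \mathbb Z$, so $\Omega\varphi$ admits a section and
\[
\Omega\widetilde N \simeq S^1 \times \Omega \widetilde N_0, \qquad \widetilde N_0 := \mathrm{hofib}(\varphi).
\]

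\emph{Step 2: the remaining factors.} Pulling the defining pushout back along the fibration $\widetilde N_0 \to \widetilde N$, and using that $N_c \to \mathbb{CP}^\infty$ is null while $P\nu \to \mathbb{CP}^\infty$ classifies $\partial V \to P\nu$, one gets
\[
\widetilde N_0 \simeq \partial V \cup_{\partial V \times S^1}(N_c \times S^1),
\]
with structure maps the projection $\partial V \times S^1 \to \partial V$ and $i\times\mathrm{id}$, where $i\colon\partial V\hookrightarrow N_c$ is the inclusion. This is precisely the set-up of the fibrewise surgery developed in Part~I. A Cube Lemma analysis of this pushout, controlled by the fibrewise framing theory there, should produce the remaining splitting
\[
\Omega\widetilde N_0 \simeq \Omega N_c \times \Omega\Sigma^2 F
\]
$p$-locally; the double suspension arises because the $S^1$-twist on the $\partial V$-side of the pushout translates, after looping, into smashing the homotopy fibre $F$ of $i$ with $S^2 = \Sigma S^1$.

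\emph{Main obstacle.} The decisive step is Step~2, i.e., the loop-space splitting of $\widetilde N_0$. It rests on $p$-local vanishing of the fibrewise framing obstructions, which live in the homotopy of a gauge group $\mathcal G(\nu)$ attached to the normal bundle, in a degree range controlled by $l-k$. The prime hypotheses --- $p > \tfrac12(l-k)+1$, $(p-1)\nmid 2s$ for $k+2\leq 4s\leq l+2$, and $p$-torsion-freeness of $H_*(B;\mathbb Z)$ --- appear calibrated to ensure exactly this vanishing: roughly, the first controls the connectivity range, the second handles the $J$-image contribution from $U(n)$, and the third handles the contribution from $B$. Once the framing is trivialised, the Cube Lemma produces the claimed splitting, and combining with the $S^1$-factor delivers the theorem.
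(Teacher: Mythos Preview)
Your overall strategy matches the paper's: split off $S^1$ via the canonical map $\widetilde N\to\mathbb{CP}^\infty$, describe the homotopy fibre $\widetilde N_0$ (the paper's $\widetilde E$) as a pushout, then run a Cube Lemma argument. Step~1 is essentially Lemma~\ref{canbundletriv}.

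The gap is in Step~2, in the identification of the pushout structure maps. When you pull the defining pushout back along $\widetilde N_0\to\widetilde N$, the induced map $\partial V\times S^1\to\partial V$ is \emph{not} the projection: it is the homotopy action $\theta$ of the principal fibration $S^1\to\partial V\to P\nu$ (Lemma~\ref{abidentify}). Equivalently, in the geometric description via fibrewise surgery (Proposition~\ref{tildeEprop}), the map $\partial V\times S^1\to N_c\times S^1$ is not $\iota_c\times\mathrm{id}$ but $(\iota_c\times\mathrm{id})\circ\phi$, twisted by a fibrewise framing $\phi$ coming from a class $\tau\in\pi_1(\mathcal G^+(\nu))$. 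If both structure maps were the untwisted ones you wrote, the Cube Lemma argument would give the decomposition \emph{integrally}, with no conditions on $p$.

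The actual work, and the reason for every prime hypothesis, is to show that this twist can be undone $p$-locally: concretely, that the composite $S^1\xrightarrow{\tau}\mathcal G^+(\nu)\xrightarrow{\chi_\nu}\mathrm{aut}_B(\partial V)$ becomes null after $p$-localization. This is Proposition~\ref{hextlemma}: one lifts through the evaluation fibration for $B\mathcal G^+(\nu)$, stabilises, and reduces to the triviality of a composite $\Sigma B\to SO\to\mathrm{aut}(S^\infty)$, which is a stable $J$-image computation. The conditions $p>\tfrac12(l-k)+1$ and $H_*(B;\mathbb Z)$ $p$-torsion-free make $\Sigma B$ split $p$-locally as a wedge of spheres, and $(p-1)\nmid 2s$ for $k+2\le 4s\le l+2$ kills the image of $J$ in the relevant stems. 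Your ``Main obstacle'' paragraph gestures at these obstructions but places them downstream of the pushout identification rather than \emph{in} it; as written, your Step~2 has already assumed away the only thing that needs proving.
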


The local condition in Theorem \ref{loopblowupthmintro} is due to the role of a gauge group in the fibrewise framing of a fibrewise surgery and will be explained in the lead up to Theorem~\ref{method_theorem}. 

Theorem \ref{loopblowupthmintro} describes the loop space homotopy type of 
a blow up in terms of its internal structure. 
In particular, it allows us to compute the homotopy groups of~the blow up $\widetilde{N}$ in terms of those of~$S^1$, the complement $N_c$ and the double suspension of the homotopy fibre $F$. 
Since a blow up along a point is a connected sum with a complex projective space, Theorem \ref{loopblowupthmintro} generalizes an earlier result of the authors \cite{HT2} about the homotopy type of 
$\Omega(N\conn\mathbb{C}P^{n})$. 

From an embedding point of view, Theorem \ref{loopblowupthmintro} implies that the homotopy theory of the blow up~$\widetilde{N}$ is strongly related to the homotopy theory of the complement $N_c$ of the embedding. 
This point has two consequences. 
First, a subtle fact is that different embeddings may lead to very different complements. For instance, in knot theory, the famous Gordon-Luecke \cite{GL} states that a knot in~$S^3$ is determined by its complement, so the complement of a circle embedded in $S^{3}$ can vary considerably. In our case, different embeddings of $B$ in $N$ may lead to different complements $N_c$, and hence different blow-ups $\widetilde{N}$ with very different homotopy theory. 
Second, there is no essential relation between the homotopy theory of a manifold and the homotopy theory of its submanifold complement. This again can be easily seen from knot theory. 
However, as the homotopy theory of the blow up~$\widetilde{N}$ is strongly related to the homotopy theory of the complement $N_c$, in general it is dramatically different from the homotopy theory of $N$ (for example, it would not be expected that the loops on $N_{c}$ retracts off the loops on $N$).

Nevertheless, in the following example we provide a family of manifolds $N$ such that the complement $N_c$ and the homotopy fibre $F$ in Theorem~\ref{loopblowupthmintro} can be well understood. 

\begin{example}[Focal genus $2$ manifolds]\label{genus2ex}
Let $N$ be a closed manifold such that it can be decomposed into a union of two disk bundles
\[
N\cong D\xi\cup_g D\nu,
\]
where $\xi$ and $\nu$ are oriented real vector bundles over manifolds $A$ and $B$ respectively, and $g: S\xi\stackrel{}{\rightarrow} S\nu$ is a diffeomorphism between their sphere bundles.
Following \cite{D2}, $N$ is called a manifold of {\it focal genus $2$}. In differential geometry, interesting examples of focal genus $2$ manifolds appear in the study of Dupin hypersurfaces \cite{Tho}, including isoparametric hypersurfaces \cite{Mun, Heb}, and the principal orbits of strict cohomogeneity one actions \cite{Mos}. 
In algebraic topology, a focal genus $2$ manifold is a special case of a double mapping cylinder, the rational homotopy of which was thoroughly studied by Grove and Halperin \cite{GH}.

Suppose $\xi$ is of dimension $m$, $\nu$ is of dimension $2n$, and $\nu$ admits a complex structure. Then there is a blow up $\widetilde{N}$ of $N$ along the zero section $B$ of $D\nu$. 
Following the notation and hypotheses in Theorem \ref{loopblowupthmintro}, it is clear that $V\cong D\nu\simeq B$, $N_c\cong D\xi\simeq A$ and $\partial V\cong S\xi\cong S\nu$. Since the inclusion $S\xi \stackrel{}{\hookrightarrow}D\xi$ of the boundary is homotopic to the sphere bundle projection $S\xi \stackrel{}{\rightarrow} A$, the homotopy fibre~$F$ is homotopy equivalent to $S^{m-1}$.
Then under the hypotheses of Theorem \ref{loopblowupthmintro} there is a $p$-local homotopy equivalence
\begin{equation} 
  \label{focalgenusdecomp} 
  \Omega \widetilde{N}\simeq S^1\times \Omega A\times \Omega S^{m+1}. 
\end{equation} 
\end{example}

In rational homotopy theory, Lambrechts and Stanley \cite[Theorem 7.8]{LS3} determined the rational homotopy type of the blow up $\widetilde{N}$ in the stable range in terms of the rational homotopy type of the embedding $B\hookrightarrow N$ and its rational Chern classes. In local homotopy theory, Theorem \ref{loopblowupthmintro} implies that at any sufficiently large prime $p$ the loop space homotopy type of~$\widetilde{N}$ only depends on the complement of the embedding, in terms of $N_{c}$ itself and the homotopy fibre of the inclusion $\partial N_{c}=\partial V\hookrightarrow N_c$. 
Furthermore, rationally the statement of Theorem~\ref{loopblowupthmintro} simplifies and holds 
in greater generality.

\begin{theorem}\label{loopblowupthmintrorational}
Let $N$ be a connected closed oriented smooth manifold.  
Let $\widetilde{N}$ be the blow up of~$N$ along a connected $l$-dimensional submanifold $B$ with a normal bundle of complex dimension $n$. Suppose that $l\leq 2n-4$. Then there is a rational homotopy equivalence 
\[
\Omega \widetilde{N}\simeq S^1\times \Omega N_c\times \Omega \Sigma^2 F,
\]
where $F$ is the homotopy fibre of the inclusion $ \partial V\hookrightarrow N_c$ of the boundary.
\end{theorem}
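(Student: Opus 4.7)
The plan is to follow the same fibrewise-surgery and Cube-Lemma strategy used to prove Theorem~\ref{loopblowupthmintro}, observing that the three families of auxiliary hypotheses in that theorem---nilpotence, the prime restrictions, and $p$-torsion-freeness of $H_\ast(B;\mathbb{Z})$---all become vacuous rationally. Since the target is ultimately a rationalization of loop spaces (which admit rationalizations regardless of nilpotence of the underlying space, and equivalently can be treated throughout via Sullivan models), the nilpotence condition used in Theorem~\ref{loopblowupthmintro} to enable $p$-local localization is unnecessary here. Connectedness of $B$ is retained so that the constituents of the pushout defining $\widetilde{N}$ remain connected and rationalize consistently.

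Next I would revisit the two obstruction-theoretic inputs in the surgery construction. The fibrewise framing required to turn $D\nu$ into $P\nu$ has obstructions in a homotopy group of a gauge group related to $U(n)$; rationally these groups are finite-dimensional in each degree and concentrated in odd degrees (by Bott periodicity), and the relevant obstructions vanish after tensoring with $\mathbb{Q}$. This eliminates the need for the conditions $p>\tfrac{1}{2}(l-k)+1$ and $(p-1)\nmid 2s$ from Theorem~\ref{loopblowupthmintro}. Similarly, the sphere-splittings invoked in the Cube-Lemma step hold universally after rationalization, since every finite-dimensional rational graded vector space is free; this removes the need for the $p$-torsion-freeness hypothesis on $H_\ast(B;\mathbb{Z})$. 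The remaining geometric input---the cofibration $\partial V\hookrightarrow N_c$ with homotopy fibre $F$, together with the stable range $l\le 2n-4$ that enables the Ganea-type comparison between $N_c/\partial V$ and $\Sigma F$---is unchanged, so the decomposition is assembled just as in Theorem~\ref{loopblowupthmintro}. The resulting $S^1$ factor arises from the fibrewise Hopf fibration $S^1\to S\nu\to P\nu$, which realises $\Omega\mathbb{C}P^{n-1}\simeq_{\mathbb{Q}} S^1\times\Omega S^{2n-1}$ in each fibre of $\Omega P\nu$; the $\Omega N_c$ factor arises from the complement; and the $\Omega\Sigma^2 F$ factor is produced by the Cube-Lemma assembly applied to the pushout for $\widetilde{N}$.

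The main obstacle I anticipate is ensuring that the fibrewise-surgery construction and the intermediate cubical diagrams rationalize cleanly in the absence of the nilpotence and torsion-freeness assumptions. One approach is to replace the topological arguments at the spatial level with CDGA-level arguments using Sullivan models of the pushout, where the vanishing of the relevant rational obstructions becomes a direct algebraic computation. Alternatively, one can work topologically on loop spaces (which always admit rationalizations) and derive the rational statement by combining the $p$-local conclusion of Theorem~\ref{loopblowupthmintro} at all but finitely many primes with a standard arithmetic-square assembly to extract the rational equivalence. Either route should deliver the stated decomposition under the substantially weaker hypotheses of Theorem~\ref{loopblowupthmintrorational}.
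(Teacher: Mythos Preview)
Your proposal is workable in principle but significantly over-engineered, and it misses the simple derivation the paper actually uses. The paper proves Theorem~\ref{loopblowupthmintrorational} as an immediate corollary of Theorem~\ref{loopblowupthmintro}: since $B$ is a finite-dimensional manifold, $H_\ast(B;\mathbb{Z})$ has only finitely many torsion primes; and the conditions $p>\tfrac{1}{2}(l-k)+1$ and $(p-1)\nmid 2s$ for $k+2\le 4s\le l+2$ exclude only finitely many primes. So one chooses a \emph{single} sufficiently large prime $p$ satisfying all three conditions, applies Theorem~\ref{loopblowupthmintro} to get the $p$-local equivalence $\Omega\widetilde{N}\simeq S^1\times\Omega N_c\times\Omega\Sigma^2 F$, and then rationalizes further (rationalization factors through any $p$-localization). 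No re-running of the fibrewise-surgery argument, no CDGA models, and no arithmetic-square assembly are needed.

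You do gesture toward this at the very end, but the phrase ``combining the $p$-local conclusion \ldots\ at all but finitely many primes with a standard arithmetic-square assembly'' is still more than is required: one large prime is enough. A few of your technical remarks are also off relative to the paper: the fibrewise framing is governed by $\pi_1(\mathcal{G}^{+}(\nu))$ with structure group $SO(2n)$, not a gauge group for $U(n)$; and the stable range $l\le 2n-4$ is used (in Lemma~\ref{QimJ=0lemma} and Proposition~\ref{hextlemma}) to ensure the relevant $J$-image vanishes and to compare $\mathrm{Map}(S^{2n-1},S^{2n-1})$ with $\Omega^{2n}S^{2n}$ in the needed range, not for a Ganea-type comparison of $N_c/\partial V$ with $\Sigma F$. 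Finally, the paper does not drop the nilpotence hypothesis for the rational statement; it is implicitly retained as the blanket assumption stated before Theorem~\ref{loopblowupthmintro}, so your discussion of circumventing nilpotence is orthogonal to what the paper does.
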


An interesting application of Theorem \ref{loopblowupthmintrorational} is a refinement of the classical dichotomy characterizing rational spaces~\cite[page 452]{FHT}, \cite[Section 2.5.3]{FOT}. The dichotomy states: 

\textit{Any connected nilpotent space $X$ with rational homology of finite type and finite rational Lusternik-Schnirelmann category is either:
\begin{itemize}
\item rationally elliptic, that is, $\pi_\ast(X)\otimes \mathbb{Q}$ is finite dimensional, or else
\item rationally hyperbolic, that is, $\pi_\ast(X)\otimes \mathbb{Q}$ grows exponentially.
\end{itemize}
}
\noindent 
A connected finite dimensional $CW$-complex, for example, has finite Lusternik-Schnirelmann 
category. Since a connected compact smooth manifold can be given the structure of a finite dimensional $CW$-complex, it has rational homology of finite type and finite rational Lusternik-Schnirelmann category.

Let $\mathbb{Q}(i)$ be a copy of the vector space $\mathbb{Q}$ of degree $i$.

\begin{theorem}
\label{dichotomythmintro} 
Let $N$ be a connected closed oriented smooth manifold.  
Let $\widetilde{N}$ be the blow up of $N$ along an $l$-dimensional submanifold $B$ with a normal bundle of complex dimension $n$. 
Suppose that $l\leq 2n-4$. 
Then either:
\begin{itemize}
\item[(1)] $\widetilde{N}$ is rationally elliptic, in which case $N_c$ is also rationally elliptic, the homotopy fibre of 
the inclusion $\partial V\hookrightarrow N_c$ is rationally homotopy equivalent to a sphere $S^{q}$, and 
\[
\pi_\ast(\widetilde{N})\otimes \mathbb{Q}\cong (\pi_\ast(N_c)\otimes \mathbb{Q})\oplus
\left\{\begin{array}{cc}
\mathbb{Q}(2)\oplus \mathbb{Q}(q+2)& q \ {\rm is} \ {\rm odd}, \\
\mathbb{Q}(2)\oplus \mathbb{Q}(q+2)\oplus \mathbb{Q}(2q+3) & q \ {\rm is} \ {\rm even};
\end{array}\right. 
\]
\item[(2)] $\widetilde{N}$ is rationally hyperbolic, in which case either $N_c$ is rationally hyperbolic or the homotopy fibre of the inclusion $\partial V\hookrightarrow N_c$ is not rationally homotopy equivalent to a sphere.
\end{itemize}
\end{theorem}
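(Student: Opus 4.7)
The plan is to read off the dichotomy for $\widetilde{N}$ directly from the rational loop space decomposition in Theorem \ref{loopblowupthmintrorational}. That theorem provides, under the hypothesis $l\leq 2n-4$, a rational homotopy equivalence
\[
\Omega \widetilde{N}\;\simeq\;S^{1}\times \Omega N_{c}\times \Omega \Sigma^{2}F.
\]
Passing to rational homotopy groups and using $\pi_{k}(\Omega X)\otimes\mathbb{Q}\cong \pi_{k+1}(X)\otimes\mathbb{Q}$, the $S^{1}$ factor contributes a single copy of $\mathbb{Q}$ in $\pi_{2}(\widetilde{N})\otimes\mathbb{Q}$ (reflecting the rational identification $S^{1}\simeq \Omega \mathbb{C}P^{\infty}$), giving
\[
\pi_{*}(\widetilde{N})\otimes\mathbb{Q}\;\cong\;\mathbb{Q}(2)\;\oplus\;(\pi_{*}(N_{c})\otimes\mathbb{Q})\;\oplus\;(\pi_{*}(\Sigma^{2}F)\otimes\mathbb{Q}).
\]

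The next step is to characterise when the last summand is finite dimensional. Since $\Sigma^{2}F$ is simply connected and a suspension, it is formal with trivial rational cup products, so $\Sigma^{2}F$ is rationally equivalent to a wedge of spheres. By Hilton--Milnor, the loop space of a wedge of two or more spheres has exponentially growing rational homotopy, so $\pi_{*}(\Sigma^{2}F)\otimes\mathbb{Q}$ is finite dimensional precisely when $\Sigma^{2}F\simeq_{\mathbb{Q}} S^{q+2}$ for a single sphere, equivalently when $F\simeq_{\mathbb{Q}} S^{q}$.

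Combining these two observations yields (1): if $\widetilde{N}$ is rationally elliptic then every summand above must be finite dimensional, which forces $N_{c}$ to be rationally elliptic and $F\simeq_{\mathbb{Q}} S^{q}$; substituting Serre's computation of the rational homotopy groups of $S^{q+2}$---a single $\mathbb{Q}$ in degree $q+2$ when $q$ is odd, and $\mathbb{Q}$ in degrees $q+2$ and $2q+3$ (the latter from the Whitehead square) when $q$ is even---produces the stated formula. For (2), the rational dichotomy applies to the closed smooth manifold $\widetilde{N}$, so failure of ellipticity forces hyperbolicity; the contrapositive of (1) states that if both $N_{c}$ is rationally elliptic and $F\simeq_{\mathbb{Q}} S^{q}$ then $\widetilde{N}$ is rationally elliptic, so rational hyperbolicity of $\widetilde{N}$ forces either $N_{c}$ to be non-elliptic (and hence hyperbolic by the dichotomy applied to the compact manifold $N_{c}$) or $F$ to fail to be rationally a sphere.

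The main subtlety I expect is the degree bookkeeping for the $S^{1}$ factor: a circle in the loop space contributes to $\pi_{2}$, not $\pi_{1}$, of the base, which is correctly accounted for by the rational equivalence $S^{1}\simeq \Omega \mathbb{C}P^{\infty}$, so that the decomposition is rationally the delooping of a $\mathbb{C}P^{\infty}$ summand. A secondary point will be checking that the dichotomy's hypotheses (nilpotency, finite-type rational homology, finite rational LS category) hold for both $\widetilde{N}$ and $N_{c}$; the last two are automatic for compact smooth manifolds, so only nilpotency requires care, and it is inherited from the hypotheses already in force.
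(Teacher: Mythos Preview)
Your approach matches the paper's almost step for step: apply Theorem~\ref{loopblowupthmintrorational}, read off $\pi_\ast(\widetilde{N})\otimes\mathbb{Q}\cong\mathbb{Q}(2)\oplus(\pi_\ast(N_c)\otimes\mathbb{Q})\oplus(\pi_\ast(\Sigma^2 F)\otimes\mathbb{Q})$, note that $\Sigma^2 F$ is rationally a wedge of spheres, and split on whether this wedge is a single sphere.

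There is one small but genuine omission. Your claim that $\pi_\ast(\Sigma^2 F)\otimes\mathbb{Q}$ is finite dimensional \emph{precisely} when $F\simeq_{\mathbb{Q}} S^q$ is not quite right as stated: a wedge of \emph{zero} spheres also has finite dimensional rational homotopy, so you have not excluded the possibility that $F$ is rationally contractible. If that happened, case~(1) would be false as written, since it asserts $F$ is rationally a sphere. The paper closes this gap by observing that $N_c$ is a compact $(l+2n)$-manifold with boundary $\partial V$, so $H^{l+2n}(N_c,\partial V;\mathbb{Z})\cong\mathbb{Z}$; hence $\iota_c\colon\partial V\hookrightarrow N_c$ is not a rational homotopy equivalence and $F$ is not rationally contractible. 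You should add this one-line argument, after which your proof is complete and coincides with the paper's.
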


In practice, it is usually much harder to construct rationally elliptic manifolds than rationally hyperbolic manifolds. For instance, a result of Beben and the second author \cite{BT1} implies that most $(n-1)$-connected closed $2n$-manifolds are rationally hyperbolic. Nevertheless, a particular aspect of the following example indicates that we can construct examples of rationally elliptic manifolds from the blow up construction of rationally elliptic focal genus $2$ manifolds.

\begin{example}[Focal genus $2$ manifolds continued]\label{genus2ex.2}
Let $N$ be the focal genus $2$ manifold in Example~\ref{genus2ex}. By either the local homotopy equivalence~(\ref{focalgenusdecomp}) or Theorem \ref{dichotomythmintro}, the blow up $\widetilde{N}$ is rationally elliptic if and only if $A$ is rationally elliptic. On the other hand, a result of Grove and Halperin \cite{GH} implies that $A$ is rationally elliptic if and only if $N$ is.
Hence, for a focal genus $2$ manifold, the blow up construction preserves rational ellipticity and rational hyperbolicity. 
\end{example}

Theorems \ref{loopblowupthmintro}, \ref{loopblowupthmintrorational} and~\ref{dichotomythmintro} will be proved in Part \ref{part1} from a surgery theory point of view. To motivate the ideas involved, we first briefly review the strategy in \cite{HT2} for studying the loop space homotopy type of $N\#\mathbb{C}P^n$, the connected sum of $N$ and the complex projective space $\mathbb{C}P^n$. This connected sum can be viewed as the effect of a $0$-surgery on $N\amalg \mathbb{C}P^n$. Moreover, there is a canonical circle bundle over $N\#\mathbb{C}P^n$ determined by a generator of $H^2(\mathbb{C}P^n)$, such that its total manifold $E$ is the effect of a $1$-surgery on $N\times S^1$, the product manifold of $N$ with the circle $S^1$. This leads to a homotopy equivalence $\Omega(N\#\mathbb{C}P^{n})\simeq S^{1}\times\Omega E$, so to study the homotopy type of $\Omega(N\#\mathbb{C}P^n)$ it is equivalent to study the homotopy type of $\Omega E$.
To do this we consider the framing of the $1$-surgery, which is determined by a map from $S^1$ to the special orthogonal group $SO(2n)$. Recall the classical $J$-homomorphism can be realized by a map $SO(2n) \stackrel{J}{\longrightarrow} \Omega^{2n}S^{2n}$. 
It turns out that the homotopy effect of the framing is determined by a class in the image of $\pi_1(SO(2n)) \stackrel{J_\ast}{\longrightarrow} \pi_1(\Omega^{2n}S^{2n})$ that is of order at most $2$. A homotopy theoretic argument involving the Cube Lemma is then used to prove a  decomposition of $\Omega E$ after localization away from $2$. This implies a decomposition of 
$\Omega(N\#\mathbb{C}P^n)$ after localization away from $2$. One conclusion is that the homotopy effect of the framing can be viewed as an obstruction to a homotopy decomposition of 
$\Omega (N\#\mathbb{C}P^n)$ that holds for all $N$ simultaneously.

In Part \ref{part1}, we apply a ``fibrewise'' version of the above argument to prove Theorem \ref{loopblowupthmintro}. Roughly speaking, as the blow up at a point is simply a connected sum with complex projective space, the general blow up $\widetilde{N}$ can be viewed as a kind of {\it fibrewise connected sum} of $N$ with a {\it fibrwise projective space} over $B$. This view is carried out explicitly by introducing the more general notion of {\it fibrewise surgery} in Section~\ref{sec: semi} and constructing a manifold $P_B\nu$ as a fibrewise projective space over~$B$ in Section~\ref{sec: PBnu}.
Similar to the non-fibrewise case, a fibrewise connected sum is exactly the effect of a {\it fibrewise $0$-surgery}. In Section \ref{sec: blowup} we show that the blow up $\widetilde{N}$ is a fibrewise connected sum $N\mathop{\#}\limits_{B}P_B\nu$, which descends to the usual connected sum $N\# \mathbb{C}P^n$ when $B$ is a point. 
From this point of view, there is a canonical circle bundle over the blow up $\widetilde{N}=N\mathop{\#}\limits_{B}P_B\nu$ such that its total manifold $\widetilde{E}$ is the effect of a {\it fibrewise $1$-surgery} on $N\times S^1$.
This is proved in Section \ref{sec: hatPBnu}. This leads to a homotopy equivalence 
$\Omega\widetilde{N}\simeq S^{1}\times\Omega\widetilde{E}$, so to study the homotopy type of 
$\Omega\widetilde{N}$ it is equivalent to study the  homotopy type of $\Omega\widetilde{E}$. To do this we consider the {\it fibrewise framing} of the fibrewise $1$-surgery as for the usual surgery. It turns out that in this fibrewise case the framing is determined by a map from $S^1$ to the gauge group $\mathcal{G}^{+}(\nu)$ of the normal bundle $\nu$, and its homotopy effect can be detected by a class in the image of $[\Sigma B, SO(2n))] \stackrel{\mathfrak{J}_\ast}{\longrightarrow} [\Sigma B, \Omega^{2n}S^{2n}]$. Notice that when $B$ is a point, this reduces to the $J$-homomorphism in the non-fibrewise case. Under the hypotheses of Theorem~\ref{loopblowupthmintro}, it can be shown that this class will be trivial after localization at the prime $p$. Then, similar to the treatment in~\cite{HT2}, a homotopy theoretic argument involving the Cube Lemma is used to prove a decomposition of $\Omega\widetilde{E}$ after localization at the prime $p$. This implies a decomposition of $\Omega\widetilde{N}$ after localization at the prime $p$, proving Theorem \ref{loopblowupthmintro}. The analysis of the homotopy theory of the fibrewise framing and the fibrewise $1$-surgery will be carried out in Section \ref{sec: homotopy}.

This development of fibrewise surgery is interesting in its own right and may have other applications. We summarize the fibrewise surgery statements used to prove Theorem \ref{loopblowupthmintro}.

\begin{theorem}[Lemma \ref{blowup=sumlemma}, Proposition \ref{tildeEprop} and Examples \ref{blowuppointex} and \ref{S1blowuppointex}]\label{surgerythmintro}\label{method_theorem} 
Let $\widetilde{N}$ be the blow up of a closed manifold $N$ along a submanifold $B$ with a normal bundle $\nu$ of complex dimension $n$. Then $\widetilde{N}$ is a fibrewise connected sum $N\mathop{\#}\limits_{B}P_B\nu$ of $N$ with a fibrewise projective space $P_B\nu$ over $B$. 
Further, there is a canonical circle bundle over $\widetilde{N}$ such that its total manifold $\widetilde{E}$ is the effect of a fibrewise $1$-surgery on $N \times S^1$ with fibrewise framing determined by a class of $\pi_1(\mathcal{G}^{+}(\nu))$. 

In the special case when $B$ is a point, the fibrewise projective space $P_B\nu\cong \mathbb{C}P^n$ is the usual projective space and the blow up $\widetilde{N}$ is the usual connected sum $N\#\mathbb{C}P^n$. Further, the total manifold $\widetilde{E}$ is the effect of a usual $1$-surgery on $N\times S^1$ with framing determined by a class of $\pi_1(SO(2n))$. 
\end{theorem}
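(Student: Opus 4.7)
The plan is to prove the three assertions in turn: first, that the blow-up is a fibrewise connected sum $N\mathop{\#}\limits_{B} P_{B}\nu$; second, that there is a canonical circle bundle over $\widetilde{N}$ whose total space $\widetilde{E}$ is a fibrewise $1$-surgery on $N\times S^{1}$ with framing class in $\pi_{1}(\mathcal{G}^{+}(\nu))$; and third, that all three constructions specialise to their classical counterparts when $B$ is a point.

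For the first assertion, I would construct $P_{B}\nu$ as the bundle over $B$ with fibre $\overline{\mathbb{C}P^{n}\setminus \mathrm{int}(D^{2n})}$; concretely this is the mapping cylinder of the fibrewise Hopf projection $S\nu\to P\nu$, compactified by $P\nu$ at the ``zero end'' and with boundary $S\nu$ at the ``disk end''. Since the topological blow-up was defined as the pushout of $\partial V\to P\nu$ and $\partial V\to N_{c}$, this pushout coincides with $N_{c}\cup_{S\nu} P_{B}\nu$, i.e.\ the gluing of $N_{c}$ and $P_{B}\nu$ along their common boundary $S\nu$. Fibrewise, removing the zero disk fibre of $D\nu$ and replacing it with $\mathbb{C}P^{n}\setminus \mathrm{int}(D^{2n})$ is exactly the connected sum with $\mathbb{C}P^{n}$ in each fibre, so this gluing is precisely the fibrewise $0$-surgery/fibrewise connected sum $N\mathop{\#}\limits_{B}P_{B}\nu$ defined in Section~\ref{sec: semi}.

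For the second assertion, the Hopf principal $S^{1}$-bundle $S^{2n+1}\to \mathbb{C}P^{n}$ globalises fibrewise over $B$ to a principal $S^{1}$-bundle over $P_{B}\nu$. Its restriction to the boundary $S\nu$ is (fibrewise) the restriction of $S^{2n+1}$ to $\mathbb{C}P^{n}\setminus \mathrm{int}(D^{2n})$, which is $D^{2}\times S^{2n-1}$ with boundary $S^{1}\times S^{2n-1}$. On the complementary side, the trivial circle bundle $N_{c}\times S^{1}$ restricts to $\partial V\times S^{1}=S\nu\times S^{1}$ on the common boundary. These two boundary restrictions match, and I would glue them to produce the circle bundle $\widetilde{E}\to \widetilde{N}$. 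Comparing with the decomposition $N\times S^{1}=(N_{c}\times S^{1})\cup (D\nu\times S^{1})$, the total space $\widetilde{E}$ is obtained from $N\times S^{1}$ by removing $D\nu\times S^{1}$ and gluing in the fibrewise $D^{2}\times S\nu$, which is by definition a fibrewise $1$-surgery along the core $B\times S^{1}\subset N\times S^{1}$. The framing datum needed to perform this surgery is precisely a trivialisation of the normal bundle of $B\times S^{1}$ twisted along the $S^{1}$ direction, and a map $S^{1}\to \mathcal{G}^{+}(\nu)$ encodes exactly this twist, yielding a class in $\pi_{1}(\mathcal{G}^{+}(\nu))$ read off from the Hopf clutching data.

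The main obstacle is the second step: identifying the fibrewise Hopf circle bundle on $P_{B}\nu$ coherently as a global principal $S^{1}$-bundle, matching its boundary trivialisation with the trivial $S^{1}$-bundle over $N_{c}$, and extracting from this matching the specific element of $\pi_{1}(\mathcal{G}^{+}(\nu))$ that serves as the fibrewise framing. Once this is carried out, the third assertion is immediate: when $B$ is a point, $\nu$ is just $\mathbb{C}^{n}$, the gauge group $\mathcal{G}^{+}(\nu)$ collapses to $SO(2n)$, the fibrewise projective space $P_{B}\nu$ reduces to $\mathbb{C}P^{n}$, and the fibrewise connected sum and fibrewise $1$-surgery become their classical analogues; the framing class then corresponds to the element of $\pi_{1}(SO(2n))$ that appeared in the connected sum analysis of \cite{HT2}.
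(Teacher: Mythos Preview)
Your outline matches the paper's strategy closely, but there is a definitional mismatch you should fix. What you call $P_{B}\nu$ --- the bundle over $B$ with fibre $\overline{\mathbb{C}P^{n}\setminus\mathrm{int}(D^{2n})}$ --- is what the paper calls $D\lambda_{\nu}$, the disk bundle of the tautological line bundle $\lambda_{\nu}$ over $P\nu$. The paper's $P_{B}\nu$ is instead the \emph{closed} manifold obtained as the geometric pushout of $\iota_{\nu}\colon\partial V\to V$ and $\iota_{\lambda}\colon\partial V\to D\lambda_{\nu}$; it fibres over $B$ with fibre $\mathbb{C}P^{n}$ (Lemma~\ref{kappanulemma}). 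This matters because the fibrewise connected sum of Example~\ref{fibre0surex} is defined for two closed manifolds each containing an embedded copy of $V$; your piece with boundary is $P_{B}\nu\setminus\jmath_{\lambda}(\accentset{\circ}{V})$, not $P_{B}\nu$ itself. The geometric content of your first step is correct once this is sorted out.

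For the second step, your plan and the paper's agree, and you have correctly identified the hard point. The paper handles it as follows. Rather than ``globalising the Hopf bundle fibrewise'' directly, it builds the circle bundle cohomologically: the Euler class $e\in H^{2}(P\nu)$ of $\lambda_{\nu}$ pulls back to $D\lambda_{\nu}$ and extends by zero over $N_{c}$ to a class $\widetilde{e}\in H^{2}(\widetilde{N})$ via the homotopy pushout property (Lemma~\ref{tildeelemma}); this class classifies $\widetilde{E}\to\widetilde{N}$. The restriction of this bundle over $D\lambda_{\nu}$ is shown to be fibrewise isomorphic to $\partial V\times D^{2}$ over $B$ by a pullback argument (Lemma~\ref{hatPBnulemma}), and over $N_{c}$ it is trivially $N_{c}\times S^{1}$. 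The extraction of the framing class in $\pi_{1}(\mathcal{G}^{+}(\nu))$ is then not read off directly from clutching data; instead, the paper compares the resulting fibrewise framed $1$-embedding $g\colon V\times S^{1}\hookrightarrow N\times S^{1}$ to the standard product embedding $i\times\mathrm{id}$ and invokes tubular neighbourhood uniqueness (Lemma~\ref{fib-frame-lemma}) to conclude that, up to isotopy, they differ by a bundle automorphism of $V\times S^{1}$ over $B\times S^{1}$ given by some $\tau\colon S^{1}\to\mathcal{G}^{+}(\nu)$. This is the step your sketch leaves open, and it is the substance of Proposition~\ref{tildeEprop}.
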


\noindent 
\textbf{Results via homotopy in Part \ref{part2}}.

We now turn to Part \ref{part2} of the paper. Unlike Part \ref{part1}, where we study a canonical circle bundle over the blow up from a surgery theory point of view, in Part \ref{part2} we study the blow up $\widetilde{N}$ from a purely homotopy theoretic point of view.

From the data of the blow up construction there is the sphere bundle $S^{2n-1}\stackrel{}{\longrightarrow} \partial V\stackrel{}{\longrightarrow} B$ associated to the normal bundle $\nu$. The standard free action of $S^1$ on $S^{2n-1}$ induces a fibrewise free action on $\partial V$, 
\[
\theta: \partial V\times S^1 \stackrel{}{\longrightarrow} \partial V.
\]
In Section \ref{sec:topblowup}, by applying the Cube Lemma twice, we show the following integral loop decomposition of the blow up $\widetilde{N}$. Let $\iota_c: \partial V\hookrightarrow N_c$ be the inclusion of the boundary.
\begin{theorem}[Theorem \ref{loopNtilde}] 
   \label{loopNtildeintro} 
   Suppose that the composite $\partial V\times S^1 \stackrel{\theta}{\longrightarrow} \partial V\stackrel{\iota_c}{\hookrightarrow}N_c$ is homotopic to the composite $\partial V\times S^1 \stackrel{\pi_1}{\longrightarrow} \partial V\stackrel{\iota_c}{\hookrightarrow}N_c$, where $\pi_1$ the projection onto the first factor. Then 
   there is a homotopy equivalence 
   \[\Omega\widetilde{N}\simeq S^{1}\times\Omega N_{c}\times\Omega\Sigma^{2} F,\] 
   where $F$ is the homotopy fibre of $\iota_c$.
\end{theorem}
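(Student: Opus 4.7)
I plan to rewrite the defining pushout of the blow up in a form amenable to the Cube Lemma, then apply the Cube Lemma twice --- first to extract a canonical principal $S^{1}$-bundle $\widetilde{E}\to\widetilde{N}$ which splits off an $S^{1}$-factor from $\Omega\widetilde{N}$, and second to decompose the total space as $\widetilde{E}\simeq N_{c}\times\Sigma^{2}F$. Since $P\nu=\partial V/S^{1}$ is the homotopy coequaliser of the action $\theta$ and the first projection $\pi_{1}$ on $\partial V\times S^{1}$, concatenation with $\widetilde{N}=N_{c}\cup_{\partial V}P\nu$ expresses the blow up as
\[
\widetilde{N}\;\simeq\;\mathrm{hocolim}\bigl(\partial V\xleftarrow{\pi_{1}}\partial V\times S^{1}\xrightarrow{\iota_{c}\circ\theta}N_{c}\bigr).
\]
The hypothesis then replaces $\iota_{c}\circ\theta$ by $\iota_{c}\circ\pi_{1}$ up to homotopy, giving a simplified pushout $(\ast)$ in which both maps out of $\partial V\times S^{1}$ factor through $\pi_{1}$.

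The first Cube Lemma is used to extract the principal $S^{1}$-bundle. The exceptional projective line $S^{2}\cong\mathbb{C}P^{1}\hookrightarrow P\nu\hookrightarrow\widetilde{N}$ classifies a principal $S^{1}$-bundle $\widetilde{E}\to\widetilde{N}$, and the hypothesis is precisely what is needed to ensure this bundle restricts trivially to $N_{c}$. A Mather cube having $(\ast)$ as bottom face and the pushout describing the pullback bundle,
\[
\mathrm{hocolim}\bigl(\partial V\xleftarrow{\theta}\partial V\times S^{1}\xrightarrow{\iota_{c}\times\mathrm{id}}N_{c}\times S^{1}\bigr),
\]
as top face, with verticals the $S^{1}$-bundle projections (which are pullback squares by functoriality of pullback bundles), identifies the top pushout as $\widetilde{E}$. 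Because the exceptional $\mathbb{C}P^{1}$ has degree one on $H^{2}$, the loop fibration $\Omega\widetilde{E}\to\Omega\widetilde{N}\to S^{1}$ admits a section, giving $\Omega\widetilde{N}\simeq S^{1}\times\Omega\widetilde{E}$.

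The second cube has the $\widetilde{E}$-pushout as bottom face and, as top face, the corresponding pushout with $F=\mathrm{hofib}(\iota_{c})$ in place of $\partial V$ and a point in place of $N_{c}\times S^{1}$. The vertical faces are pullbacks arising from the fibration $F\to\partial V\xrightarrow{\iota_{c}}N_{c}$. The top pushout is the mapping cone of $\pi_{1}\colon F\times S^{1}\to F$ which, because $\pi_{1}$ is split by the basepoint inclusion $F\hookrightarrow F\times S^{1}$, has the homotopy type of $\Sigma^{2}F\vee S^{2}$ (the reduced homology verifies this via Künneth and the Puppe sequence); the $S^{2}$-summand is absorbed into the bundle splitting already extracted in Step 2. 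The Cube Lemma then identifies $\widetilde{E}\simeq N_{c}\times\Sigma^{2}F$, so $\Omega\widetilde{E}\simeq\Omega N_{c}\times\Omega\Sigma^{2}F$, and combining with Step 2 gives the desired decomposition.

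The most delicate point is the compatibility between the two cube applications: the $S^{2}$-summand produced by the section of $\pi_{1}\colon F\times S^{1}\to F$ in the second cube must coincide with the $S^{2}$ captured by the $S^{1}$-bundle splitting in the first, so that the two decompositions fit together without a spurious $\Omega S^{2}$-factor. This coherence is what the hypothesis $\iota_{c}\circ\theta\simeq\iota_{c}\circ\pi_{1}$ ultimately delivers.
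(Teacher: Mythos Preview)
There is a genuine gap at the very first step. You assert that $P\nu=\partial V/S^{1}$ is the homotopy coequaliser of $\theta,\pi_{1}\colon\partial V\times S^{1}\rightrightarrows\partial V$, and hence that $\widetilde{N}$ is the homotopy pushout of $\pi_{1}$ and $\iota_{c}\circ\theta$. This is false: the \emph{strict} coequaliser is indeed $P\nu$, but the homotopy coequaliser is not. For a concrete check take $B=\ast$ and $N=S^{2n}$, so $\partial V=S^{2n-1}$, $N_{c}\simeq\ast$, and $\widetilde{N}=\mathbb{C}P^{n}$. Your pushout then collapses to the cofibre of $\pi_{1}\colon S^{2n-1}\times S^{1}\to S^{2n-1}$, which has reduced homology $\mathbb{Z}$ in degrees $2$ and $2n+1$ only; but $\mathbb{C}P^{n}$ has top homology in degree $2n$. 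The two spaces do not even have the same dimension. In general the homotopy orbit space of a free $S^{1}$-action is recovered from the full bar construction, not from its $1$-truncation.

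This error propagates: your first cube cannot have both $(\ast)$ as bottom face and the $\widetilde{E}$-pushout as top face with $S^{1}$-bundle verticals, since the back-left corner is $\partial V$ on both faces and the identity is not an $S^{1}$-bundle. The paper's route is to keep the \emph{original} pushout $N_{c}\leftarrow\partial V\to P\nu$ for $\widetilde{N}$, pull it back along $\widetilde{e}\colon\widetilde{N}\to\mathbb{C}P^{\infty}$, and thereby obtain the pushout involving $\theta$ only at the level of $\widetilde{E}$, not $\widetilde{N}$. The hypothesis together with a shearing self-equivalence of $\partial V\times S^{1}$ then replaces $\theta$ by $\pi_{1}$ in the $\widetilde{E}$-pushout, after which a retraction $\widetilde{E}\to N_{c}$ exists and a second cube over $N_{c}$ identifies the fibre as the join $F\ast S^{1}\simeq\Sigma^{2}F$. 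Two further slips downstream: in that second cube the fibre of $N_{c}\times S^{1}\to N_{c}$ is $S^{1}$, not a point, so the top pushout is the join and no stray $S^{2}$ ever appears; and the Cube Lemma yields only a fibration $\Sigma^{2}F\to\widetilde{E}\to N_{c}$ that splits after looping, not a product decomposition $\widetilde{E}\simeq N_{c}\times\Sigma^{2}F$.
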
 

The decomposition for $\Omega\widetilde{N}$ in Theorems~\ref{loopblowupthmintro} 
and~\ref{loopNtildeintro} takes the same form but the hypotheses in each case are different and the 
category in which the conclusion holds is also different. Theorem~\ref{loopblowupthmintro} is proved in the stable range and after suitable localization, based on the study of the blow up from the perspective of geometric topology. Theorem \ref{loopNtildeintro} imposes a homotopy theoretic condition to obtain an integral decomposition without dimension restriction. As an example of Theorem~\ref{loopNtildeintro} we return to focal genus $2$-manifolds.

\begin{example}[Twisted double]\label{tdoubex}
Suppose for the focal genus $2$ manifold $N$ in Example \ref{genus2ex}, $\xi=\nu$ and
\[
N\cong D\nu\cup_g D\nu
\]
with $g$ a self diffeomorphism of the boundary. The manifold $N$ is called a {\it twisted double} of $D\nu\cong V$. In Corollary \ref{doublecoro}, we show that for the blow up $\widetilde{N}$ of the twisted double $N$ of $V$, the hypothesis of Theorem \ref{loopNtildeintro} holds and there is a homotopy equivalence
\[\pushQED{\qed} 
\Omega\widetilde{N}\simeq S^{1}\times\Omega B\times\Omega S^{2n+1}.
\qedhere\popQED \]
 \end{example}  

The purely homotopy theoretic condition on the action $\theta$ in the statement of Theorem~\ref{loopNtildeintro} is investigated systematically in Section \ref{sec: act} based on material in~\cite{BT2}. In particular, when $B$ is a point and the blow up $\widetilde{N}$ becomes the connected sum $N\#\mathbb{C}P^n$, the homotopy theoretic condition can be fully determined. Similarly, when $n$ is even, the same argument also works for $N\#\mathbb{H}P^{\frac{n}{2}}$, the connected sum of $N$ with the quaternionic projective space $\mathbb{H}P^{\frac{n}{2}}$, which can be viewed as a {\it quaternionic blow up} of $N$ at a point \cite{GGS}. These two special cases are studied in Section \ref{sec: stab}. In particular, we improve on a result in \cite{HT2} proved using a mixture of homotopy theory and surgery theory, 
with a proof that now only uses homotopy theory and no surgery theory. Let $N_0$ be the manifold $N$ with an open disk removed.

\begin{theorem}[Corollaries \ref{BJScor} and \ref{BJScor2}]\label{stabthmintro} 
Let $N$ be a connected closed $2n$-dimensional smooth manifold. Let $F$ be the homotopy fibre of the inclusion $S^{2n-1}\hookrightarrow N_0$ of the boundary. The following hold: 
\begin{itemize} 
\item if $n\geq 2$ is even, there is a homotopy equivalence 
\[
\Omega (N\# \mathbb{C}P^n)\simeq S^{1}\times \Omega N_0\times\Omega\Sigma^2 F;
\] 
\item if $n\geq 2$ is odd, after localization away from $2$ there is a homotopy equivalence 
\[
\Omega (N\# \mathbb{C}P^n)\simeq S^{1}\times \Omega N_0\times\Omega\Sigma^2 F;
\]
\item if $n\geq 4$ is even and $n\equiv 0\bmod{48}$, there is a homotopy equivalence
\[
\Omega (N\# \mathbb{H}P^{\frac{n}{2}})\simeq S^{3}\times \Omega N_0\times\Omega\Sigma^4 F;
\]
\item if $n\geq 4$ is even and $n\not\equiv 0\bmod{48}$, after localization away from the primes dividing~$\frac{48}{(48,n)}$ there is a homotopy equivalence 
\[
\Omega (N\# \mathbb{H}P^{\frac{n}{2}})\simeq S^{3}\times \Omega N_0\times\Omega\Sigma^4 F.
\]
\end{itemize}
\end{theorem}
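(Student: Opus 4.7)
The plan is to apply Theorem \ref{loopNtildeintro} (together with the quaternionic $S^{3}$-action analogue developed in Section \ref{sec: act}) to the special case when $B$ is a point. With $B=\mathrm{pt}$, the blow up $\widetilde{N}$ is exactly $N\#\mathbb{C}P^{n}$ in the complex case and $N\#\mathbb{H}P^{n/2}$ in the quaternionic case, while $N_{c}=N_{0}$, $\partial V=S^{2n-1}$, and the principal action $\theta$ is the standard Hopf action of $S^{1}$ (respectively $S^{3}$) by scalar multiplication. The target decompositions are precisely the conclusion of these theorems, so everything reduces to verifying the hypothesis $\iota_{c}\circ\theta\simeq \iota_{c}\circ\pi_{1}$, possibly after localisation. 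Since $S^{2n-1}$ is $(2n-2)$-connected and $n\geq 2$, the two composites can be arranged to agree on $S^{2n-1}\vee S^{k}$ (with $k=1$ or $3$), so the obstruction lies in $\pi_{2n-1+k}(N_{0})$ and factors through $\iota_{c}$. A Hopf-construction/adjunction argument (the systematic output of Section \ref{sec: act}) identifies this factored class as $J_{\ast}[\theta]$, where $[\theta]\in\pi_{k}(SO(2n))$ is the class of the linear adjoint of the action and $J_{\ast}$ is the $J$-homomorphism into $\pi_{2n-1+k}(S^{2n-1})$. Thus it suffices to show that $J_{\ast}[\theta]$ is null in the chosen category.

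For the complex case, the adjoint factors as $S^{1}=U(1)\xrightarrow{\Delta}U(n)\to SO(2n)$. The diagonal $\Delta$ is multiplication by $n$ on $\pi_{1}$ (computed by the determinant), and the reduction $\pi_{1}(U(n))=\mathbb{Z}\twoheadrightarrow\pi_{1}(SO(2n))=\mathbb{Z}/2$ is mod $2$. Hence $[\theta]=n\bmod 2$: when $n$ is even, $[\theta]=0$ and the obstruction vanishes integrally; when $n$ is odd, $[\theta]$ is the nontrivial class, and $J_{\ast}[\theta]=\eta\in\pi_{2n}(S^{2n-1})=\mathbb{Z}/2$ is $2$-primary, so the obstruction vanishes after inverting $2$.

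For the quaternionic case with $n=2k\geq 4$, the adjoint factors as $S^{3}=Sp(1)\xrightarrow{\Delta}Sp(k)\to SO(4k)$, and $\Delta$ is multiplication by $k$ on $\pi_{3}=\mathbb{Z}$. Combined with the stable $J$-homomorphism $\pi_{3}(SO)\twoheadrightarrow\pi_{3}^{s}=\mathbb{Z}/24$ (Adams), under which a generator of $\pi_{3}(Sp)$ is sent to a generator of $\pi_{3}^{s}$ represented by the quaternionic Hopf invariant element $\nu$, the class $J_{\ast}[\theta]$ has order $24/(24,k)=48/(48,n)$ in the stable stem. This vanishes integrally precisely when $n\equiv 0\bmod 48$, and after inverting the primes dividing $48/(48,n)$ in general. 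The main obstacle is to transfer the stable $J$-computation to the unstable group $\pi_{4k+2}(S^{4k-1})$ actually controlling the obstruction; this requires the stability bound $4k-1\geq 5$, i.e.\ $n\geq 4$, which is exactly the dimensional hypothesis of the quaternionic statements, and ensures $\pi_{4k+2}(S^{4k-1})=\pi_{3}^{s}=\mathbb{Z}/24$.
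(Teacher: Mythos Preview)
Your argument is correct and yields the stated decompositions, but it proceeds along a genuinely different line from the paper's own proof in Section~\ref{sec: stab}. The paper does \emph{not} invoke the $J$-homomorphism here at all. Instead it uses the Whitehead-product formula of Corollary~\ref{HTWhiteheadcor} to identify the obstruction with the class $[\epsilon,q_{0}]$ (where $\epsilon$ is the bottom-cell inclusion and $q_{0}$ the Hopf projection), and then quotes the Barratt--James--Stein computation \cite{BJS}: in the complex case $[\epsilon,q_{0}]=q_{0}\circ(t\eta)$ with $t=n\bmod 2$, and in the quaternionic case $[\epsilon,q_{0}]=q_{0}\circ(\pm s\cdot\nu)$. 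Your route, by contrast, adjoins the linear action to a class $[\theta]\in\pi_{k}(SO(2n))$, computes it via the diagonal $U(1)\to U(n)$ (respectively $Sp(1)\to Sp(k)$) as $n\bmod 2$ (respectively $k$ times a generator), and then pushes through the map $\pi_{k}(SO(2n))\to\pi_{k}(\mathrm{Map}_{1}(S^{2n-1},S^{2n-1}))\cong\pi_{2n-1+k}(S^{2n-1})$, which agrees with the $J$-homomorphism after one suspension in the stable range. Both computations produce the same element ($t\eta$, respectively $\pm k\nu$), so the conclusions coincide.

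Two remarks. First, your parenthetical ``the systematic output of Section~\ref{sec: act}'' is a misattribution: that section outputs Whitehead products, not the $J$-image of the adjoint. What you actually use is the elementary adjunction $\theta\simeq\pi_{1}$ iff the adjoint $S^{k}\to\mathrm{Map}_{1}(S^{2n-1},S^{2n-1})$ is null, together with Freudenthal to pass to $\pi_{2n+k}(S^{2n})$; this is self-contained and closer in spirit to the Part~\ref{part1} surgery discussion (and to \cite{HT2}) than to Section~\ref{sec: act}. Second, your approach has the virtue of bypassing the black-box citation of \cite{BJS} and recovering the sharp quaternionic bound $48/(48,n)$ directly from the diagonal degree and the order of $\nu$; the paper's approach, on the other hand, illustrates that the purely homotopy-theoretic Whitehead-product machinery of Part~\ref{part2} suffices, with no appeal to linear structure or orthogonal groups.
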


Throughout the paper, $H^\ast(B)$ is used to denote singular cohomology $H^\ast(B;\mathbb{Z})$ with integral coefficients.

\bigskip

\noindent{\bf Acknowledgement} 
Ruizhi Huang was supported in part by the National Natural Science Foundation of China (Grant nos. 12331003, 12288201 and 11801544), the National Key R\&D Program of China (No. 2021YFA1002300), the Youth Innovation Promotion Association of Chinese Academy Sciences, and the ``Chen Jingrun" Future Star Program of AMSS.
 
The authors would like to thank Prof. Jianzhong Pan for bringing reference \cite{LS3} to their attention and to Prof. Yang Su for helpful discussions on some aspects of geometric topology. 
They would also like to thank a referee for comments on the interpretation of Theorem \ref{loopblowupthmintro} and the exposition of its proof.


\section{Conventions, terminology and Mather's Cube Lemma}
\label{part0}

Suppose that all spaces have the homotopy types of $CW$-complexes. 
\medskip

\noindent 
\textbf{Some conventions}.
In this paper, the terms diagram, commutative diagram and homotopy commutative diagram are used with their standard meanings in unstable homotopy theory. 
\begin{itemize}
\item A diagram $\mathcal{D}$ means a collection of spaces $X$, $Y,\ldots$ with a collection of maps among them $f$, $g,\ldots$; 
\item A commutative diagram is a diagram $\mathcal{D}$ such that any two compositions of maps from a space $X$ to a space $Y$ in $\mathcal{D}$ are strictly equal;
\item A homotopy commutative diagram is a diagram $\mathcal{D}$ such that any two compositions of maps from a space $X$ to a space $Y$ in $\mathcal{D}$ are homotopic.
\end{itemize}
For instance, a commutative square (resp. homotopy commutative square) is a square diagram that strictly commutes (resp. homotopically commutes), and a commutative cube (resp. homotopy commutative cube) is a cubical diagram that strictly commutes (resp. homotopically commutes).

For two commutative squares $\mathcal{A}$ and $\mathcal{B}$, a {\it morphism} or {\it map} of commutative squares
\[
F: \mathcal{A}\longrightarrow \mathcal{B}
\]
is a commutative cube  
\begin{gather*}
\begin{aligned}
\xymatrixcolsep{1.5pc}
\xymatrixrowsep{1.5pc}
\xymatrix{
 & A_0 \ar[dl]  \ar[rr]  \ar[dd]|!{[d];[d]}\hole && 
 B_0\ar[dl]  \ar[dd]  \\
A_1\ar[dd]  \ar[rr]  &&
B_1\ar[dd] \\
  & A_2 \ar[dl]  \ar[rr]|!{[r];[r]}\hole  && 
 B_2\ar[dl]    \\
 A_3 \ar[rr]  &&
 B_3, 
}
\end{aligned}
\end{gather*}
where the left and right faces are $\mathcal{A}$ and $\mathcal{B}$ respectively. Further, if $\mathcal{A}$ is a {\it constant square}, that is, $A_i=A$ for any $i$ and some space $A$ and the maps among them are all identity, we simply denote $\mathcal{A}=A$ and $F: A\longrightarrow \mathcal{B}$. Similarly, if $\mathcal{B}=B$ is constant, we denote $F: \mathcal{A}\longrightarrow B$.

Let
\[
\mathcal{A}\stackrel{F}{\longrightarrow} \mathcal{B}\stackrel{G}{\longrightarrow} C
\]
be a sequence of morphisms of commutative squares with $C$ a constant square. If each of the sequences $A_i\longrightarrow B_i\longrightarrow C$ is a fibre bundle, we say the commutative square $\mathcal{B}$ is {\it fibred over the base $C$}. Similarly, let
\[
F\stackrel{H}{\longrightarrow}\mathcal{A}\stackrel{F}{\longrightarrow} \mathcal{B}
\]
be a sequence of morphisms of commutative squares with $F$ a constant square. If each of the sequences $F\longrightarrow A_i\longrightarrow B_i$ is a fibre bundle, we say the commutative square $\mathcal{A}$ is {\it fibred with fibre~$F$}.~\medskip


\noindent 
\textbf{Pushouts, homotopy pushouts and geometric pushouts}. 
There are three types of ``pushout'' that appear in this paper. 
The first is the classical pushout from point-set topology. 
\begin{definition} 
Given pointed maps 
\(f\colon\namedright{X}{}{Y}\) 
and 
\(g\colon\namedright{X}{}{Z}\) 
the \emph{pushout} $C$ is defined by the pointed quotient space 
\[C=(Y\amalg Z)/\sim\] 
where $f(x)\sim g(x)$. Diagrammatically, there is a commutative diagram 
\[\diagram 
        X\rto^-{g}\dto^{f} & Z\dto^{j} \\ 
        Y\rto^-{i} & C  
  \enddiagram\] 
where $i$ and $j$ are the composites 
\(Y\hookrightarrow\namedright{Y\amalg Z}{}{C}\) 
and 
\(Z\hookrightarrow\namedright{Y\amalg Z}{}{C}\) 
respectively. 
\end{definition} 

A pushout has a universal property. Given pointed maps 
\(r\colon\namedright{Y}{}{T}\) 
and 
\(s\colon\namedright{Z}{}{T}\) 
such that $s\circ g=r\circ f$ then there is a unique pointed map 
\(\theta\colon\namedright{C}{}{T}\) 
such that $\theta\circ i=r$ and $\theta\circ j=s$. That is, there is a commutative diagram 
\[\xymatrix{ 
    X\ar[d]^{f}\ar[r]^{g} & Z\ar[d]^{j}\ar@/^/[ddr]^{s} & \\ 
    Y\ar[r]^{i}\ar@/_/[drr]_{r} & C\ar@{.>}[dr]^(0.4){\theta} & \\ 
    & & T. 
}\] 

The second type of pushout is a homotopy pushout. Let $I=[0,1]$ be the unit interval.  
Recall that for any pointed map \(f\colon\namedright{X}{}{Y}\), 
the mapping cylinder $M_{f}$ of $f$ is defined by $M_f=Y\cup_{f} (X\times I)$, 
where $f(x)\sim (x,0)$, and the inclusion $Y\hookrightarrow M_f$ of $Y$ into the base of the mapping cylinder is a homotopy equivalence. 
This homotopy equivalence lets us replace $f$ by the inclusion $\overline{f}: X\stackrel{}{\longrightarrow} M_f$ mapping $X$ homeomorphically to $X\times\{1\}\subseteq M_f$. 

\begin{definition} 
Given pointed maps 
\(f\colon\namedright{X}{}{Y}\) 
and 
\(g\colon\namedright{X}{}{Z}\) 
a \emph{homotopy pushout} is a space $D$ homotopy equivalent to the \emph{double mapping cylinder} $\overline{C}$, the pushout of the two inclusions $\overline{f}: X\stackrel{}{\longrightarrow} M_f$ and $\overline{g}: X\stackrel{}{\longrightarrow} M_g$. Diagrammatically, there is a homotopy commutative diagram 
\[\diagram 
        X\rto^-{g}\dto^{f} & Z\dto^{\overline{j}} \\ 
        Y\rto^-{\overline{i}} & D 
  \enddiagram\] 
where $\overline{i}$ and $\overline{j}$ are the composites 
\(Y\stackrel{k}{\hookrightarrow}\namedright{M_f}{i}{\overline{C}\simeq D}\) 
and 
\(Z\stackrel{k}{\hookrightarrow}\namedright{M_g}{j}{\overline{C}\simeq D}\) 
respectively. 
\end{definition} 
From the definition, the homotopy pushout of $f$ and $g$ is unique up to homotopy equivalence. 
There is a ``semi-universal" property. Given pointed maps 
\(r\colon\namedright{Y}{}{T}\) 
and 
\(s\colon\namedright{Z}{}{T}\) 
such that $s\circ g\simeq r\circ f$ then there is a pointed map 
\(\theta\colon\namedright{D}{}{T}\) 
such that $\theta\circ \overline{i}\simeq r$ and $\theta\circ \overline{j}\simeq s$. That is, there is a homotopy commutative diagram 
\begin{equation}\label{hpushout-univ}
\begin{gathered}
\xymatrix{ 
    X\ar[d]^{f}\ar[r]^{g} & Z\ar[d]^{\overline{j}}\ar@/^/[ddr]^{s} & \\ 
    Y\ar[r]^{\overline{i}}\ar@/_/[drr]_{r} & D\ar@{.>}[dr]^(0.4){\theta} & \\ 
    & & T. 
}
\end{gathered}
\end{equation}
This semi-universal property is weak in the sense that the induced map $\theta$ is not unique in general. 

A pushout is not necessarily a homotopy pushout. However, if one of the structural maps $f$ and~$g$ is the inclusion of a subcomplex or more generally a cofibration, then the pushout of $f$ and $g$ is a homotopy pushout.

The third type of pushout is a geometric pushout, which is defined for manifold gluings.  

\begin{definition}\label{gpushoutdef} 
Let $B_1$ and $B_2$ be two smooth manifolds with nonempty boundary such 
that $\partial B_1=\partial B_2=A$. The {\it geometric pushout} of $B_1$ and $B_2$ is the pushout 
\begin{equation*}
\begin{gathered}
\label{geopushoutdiag}
\xymatrix{
A \ar[r]^{i_1} \ar[d]^{i_2} &
B_1\ar[d]^{j_2} \\
B_2\ar[r]^{j_1} &
Y,
}
\end{gathered}
\end{equation*} 
where $i_1$ and $i_2$ are diffeomorphisms onto the boundaries of $B_1$ and $B_2$. The pushout~$Y$ is a closed smooth manifold after smoothening and both $j_1$ and $j_2$ are embeddings. 
\end{definition} 
Equivalently, $Y$ can be obtained by gluing $B_1$ and $B_2$ along a pair of collars for their boundaries. 
Further, as $i_1$ and $i_2$ are cofibrations, the geometric pushout~$Y$ is a homotopy pushout. 
We summarize the relations among the three types of pushouts as follows.
\begin{lemma}\label{3pushoutlemma}
If one of the structural maps of a pushout is a cofibration, it is a homotopy pushout. 
In particular, a geometric pushout is both a pushout and a homotopy pushout.
$\qqed$
\end{lemma}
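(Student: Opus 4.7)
For the first assertion, I may assume without loss of generality that $f\colon X\to Y$ is a cofibration. The plan is to compare the strict pushout $C$ with the double mapping cylinder $\overline{C}$, which is by construction a homotopy pushout, via the natural projection $\pi\colon\overline{C}\to C$ that collapses the cylinder coordinates $X\times I$ in each mapping cylinder onto $X$ and then to the identified image in $C$. Since $\pi$ restricts to the identity on both $Y$ and $Z$, producing a homotopy inverse is the content of the lemma.

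To show $\pi$ is a homotopy equivalence I would introduce the intermediate pushout $P = Y\cup_{X}M_{g}$, obtained by gluing $Y$ to the mapping cylinder $M_{g}$ along $f$ and the standard inclusion $X\hookrightarrow M_{g}$ at the zero end. Since both $f$ and $X\hookrightarrow M_{g}$ are cofibrations, $P$ is a pushout of cofibrations along a common source and is therefore itself a homotopy pushout of $f$ and $g$, giving $P\simeq\overline{C}$. Next, the deformation retraction $r\colon M_{g}\to Z$ is a homotopy equivalence that restricts to $g$ on $X$; pushing out $r$ along the cofibration $f$ induces a homotopy equivalence $P\to C$. Combining, $C\simeq P\simeq\overline{C}$, so $C$ is a homotopy pushout.

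For the geometric pushout of Definition \ref{gpushoutdef}, the collar neighborhood theorem produces collars $A\times[0,1)\hookrightarrow B_{i}$ for $i=1,2$, making both boundary inclusions $i_{1}$ and $i_{2}$ cofibrations. The first assertion therefore applies, and $Y$ is simultaneously a pushout (by its very definition) and a homotopy pushout.

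The only substantive ingredient is the claim that pushing a homotopy equivalence out along a cofibration again yields a homotopy equivalence. This is a classical consequence of the homotopy extension property and may also be extracted from Mather's Cube Theorem, which is already flagged as the central technical tool of this section; once this is granted, the rest of the argument is essentially formal, and I expect no further obstacle.
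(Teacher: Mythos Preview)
The paper states this lemma without proof (note the $\qqed$ at the end of the statement), treating it as a standard fact. Your argument is the standard one and is correct.

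Two minor remarks. First, your justification that $P\simeq\overline{C}$ (``a pushout of cofibrations along a common source is a homotopy pushout'') is essentially a special case of what you are proving; the cleaner way to get this step is to apply the very same key lemma you use in the second step to the collapse $\overline{C}\to P$ of $M_{f}$ onto $Y$, noting that $M_{f}\hookrightarrow\overline{C}$ is a cofibration because it is the pushout of the cofibration $X\hookrightarrow M_{g}$. Second, invoking Mather's Cube Theorem for the fact that a homotopy equivalence pushed out along a cofibration is again a homotopy equivalence is heavier machinery than needed; this follows directly from the homotopy extension property (or from observing that the cofibre is unchanged). These are expository points only; the mathematics is sound.
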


\noindent 
\textbf{Mather's Cube Lemma}.
We describe a theorem due to Mather \cite{M}. The statement is actually a special case, 
proved in~\cite[Lemma 3.1]{PT}. 

\begin{theorem}[Mather's Cube Lemma]\label{2cubthm} 
Suppose that there is a homotopy pushout 
\[\diagram 
     A\rto\dto &  B\dto \\ 
     C\rto & D 
  \enddiagram\] 
and a homotopy fibration 
\(\nameddright{D'}{}{D}{h}{Z}\). 
Then composing all the maps in the homotopy pushout with~$h$ and taking homotopy fibres over 
the common base $Z$ gives a homotopy commutative cube
\begin{gather*}
\begin{aligned}
\xymatrixcolsep{1.5pc}
\xymatrixrowsep{1.5pc}
\xymatrix{
 & A^\prime \ar[dl]  \ar[rr]  \ar[dd]|!{[d];[d]}\hole && 
 B^\prime \ar[dl]  \ar[dd]  \\
 C^\prime \ar[dd]  \ar[rr]  &&
 D^\prime \ar[dd] \\
  & A \ar[dl]  \ar[rr]|!{[r];[r]}\hole  && 
 B\ar[dl]    \\
 C \ar[rr]  &&
 D 
}
\end{aligned}
\end{gather*}
that defines $A'$, $B'$ and $C'$, and the cube has the property that the four sides 
are all homotopy pullbacks and the top face is a homotopy pushout.~$\qqed$  
\end{theorem}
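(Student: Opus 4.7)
The plan is to split the argument into two parts: first verify that the four side faces are homotopy pullbacks (this is essentially tautological), then establish that the top face is a homotopy pushout (this is the content of the theorem).

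For the side faces, observe that by construction $A'$, $B'$ and $C'$ are defined as the homotopy fibres of the composites
\[
A\to D\stackrel{h}{\to} Z,\qquad B\to D\stackrel{h}{\to} Z,\qquad C\to D\stackrel{h}{\to} Z.
\]
Each such homotopy fibre can be realized as the homotopy pullback $X\times^{h}_{Z} PZ$ along the path-space fibration $PZ\to Z$. Hence every side face of the cube is obtained by vertically stacking two homotopy pullback squares over the map $D\to Z$, namely the defining square $D'\simeq D\times^{h}_{Z} PZ$ and $X'\simeq X\times^{h}_{Z} PZ$. By the pasting lemma for homotopy pullbacks, each side face is itself a homotopy pullback.

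For the top face, the strategy is to pass to strict models in which the pushout and the fibre can both be computed on the nose. First, replace the maps $f:A\to B$ and $g:A\to C$ by cofibrations (for instance by using the double mapping cylinder construction appearing in the definition of a homotopy pushout), so that $D$ may be taken to be the strict pushout $B\cup_{A} C$ with $B$, $C$, $A$ all embedded as subspaces. Second, replace $h:D\to Z$ by a Serre fibration without altering $D$ up to homotopy equivalence, so that $D':=h^{-1}(\ast)$ is the honest fibre over the basepoint. Under these replacements the three homotopy fibres above are computed by the honest fibres $A'=(h|_{A})^{-1}(\ast)$, $B'=(h|_{B})^{-1}(\ast)$ and $C'=(h|_{C})^{-1}(\ast)$.

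Since preimages commute with union and intersection, and since set-theoretically $D=B\cup C$ with $A=B\cap C$ in this model, we obtain $D'=B'\cup C'$ with $A'=B'\cap C'$. Moreover, because the pullback of a cofibration along a fibration is a cofibration, the maps $A'\hookrightarrow B'$ and $A'\hookrightarrow C'$ are cofibrations. Hence $D'=B'\cup_{A'} C'$ is a strict pushout along a cofibration and is therefore a homotopy pushout, giving the top face of the cube. The main obstacle in the above outline is the bookkeeping involved in the strict replacement step: one must ensure simultaneously that the bottom face becomes a strict pushout along cofibrations, that $h$ becomes a Serre fibration, and that the resulting honest fibres really do model the homotopy fibres of the original composites. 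Once this setup is in place, the top face is essentially a formal consequence of the identity $h^{-1}(B\cup_{A} C)=h^{-1}(B)\cup_{h^{-1}(A)} h^{-1}(C)$.
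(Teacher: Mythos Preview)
The paper does not give a proof of this theorem; it is quoted from Mather \cite[Theorem~25]{M} and the special case in \cite[Lemma~3.1]{PT}, and the statement simply ends with a QED box followed by a remark explaining why the compatibility hypotheses in Mather's original formulation are automatic when the cube is produced by taking fibres over a common base. So there is no argument in the paper to compare yours against.

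Your outline is the standard direct proof and is essentially correct, but two points of precision deserve attention. First, the assertion that the pullback of a cofibration along a fibration is again a cofibration is a theorem about closed Hurewicz cofibrations and Hurewicz fibrations (Str{\o}m); it is not valid verbatim in the Serre setting, so you should work with Hurewicz fibrations throughout rather than Serre fibrations. Second, the cleanest way to carry out the replacement you describe is to keep the double-mapping-cylinder model for $D=B\cup_{A}C$ and set $X'=X\times_{Z}PZ$ for each $X\in\{A,B,C,D\}$, i.e.\ use the path-space model for the homotopy fibre directly rather than first converting $h$ to a fibration and then taking strict fibres. With this model $B'\to B$ is the pullback of the Hurewicz fibration $PZ\to Z$ and hence is itself a Hurewicz fibration, so $A'\to B'$, being the pullback of the closed cofibration $A\hookrightarrow B$ along it, is a closed cofibration by Str{\o}m's theorem. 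Since $D=B\cup C$ with $A=B\cap C$ and the functor $(-)\times_{Z}PZ$ preserves these unions and intersections, one obtains $D'=B'\cup_{A'}C'$ as a strict pushout along cofibrations, hence a homotopy pushout. This disposes of the bookkeeping issue you flagged and avoids the question of whether restrictions of the replaced fibration to subspaces remain fibrations.
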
 

The original statement of the Cube Lemma~\cite[Theorem 25]{M} assumes that 
there is a homotopy commutative cube in which the bottom face is a homotopy pushout, 
the four sides are homotopy pullbacks, and there are certain compatibilities among the homotopies, 
and concludes that the top face is a homotopy pushout. As shown in~\cite[Lemma 3.1]{PT}, the hypothesis that the homotopy commutativity of the cube is due to it being induced by taking fibres over a map 
\(\namedright{D}{h}{Z}\) ensures the compatibility conditions hold.


\part{Blow ups from a surgery theory point of view}
\label{part1}
In this part we study blow ups from the perspective of geometric topology.
In Section \ref{sec: semi} we introduce the notion of {\it fibrewise surgery}, with the {\it fibrewise connected sum} as an example. This is a generalization of the usual surgery and connected sum.
In Section \ref{sec: PBnu} we construct a {\it fibrewise projective space} $P_B\nu$ as a fibrewise copy of $\mathbb{C}P^n$ over $B$. 
In Section \ref{sec: blowup} we review the blow up construction following \cite{LS3} and show that a blow up is the fibrewise connected sum of a manifold~$N$ with the fibrewise projective space $P_B\nu$. 
When $B$ is a point this reduces to the usual connected sum $N\#\mathbb{C}P^n$.
In Section \ref{sec: hatPBnu} we construct a canonical circle bundle over $P_B\nu$, and over the blow up $\widetilde{N}$ as well. We prove that the total manifold of the circle bundle over $P_B\nu$ is a {\it fibrewise sphere} over $B$, and the total manifold $\widetilde{E}$ of the circle bundle over $\widetilde{N}$ is a {\it fibrewise $1$-surgery} of the product manifold $N\times S^1$.
In Section \ref{sec: homotopy} we study the homotopy effect of the {\it fibrewise framing} of the fibrewise $1$-surgery by the classical $J$-homomorphism, and establish a loop homotopy decomposition of the total manifold $\widetilde{E}$. This allows us to prove Theorem \ref{loopblowupthmintro} and its rational version in Theorem~\ref{loopblowupthmintrorational} to obtain a loop homotopy decomposition of the blow up $\widetilde{N}$.
As an application, we prove the extended version of the classical rational dichotomy for the rational homotopy groups of blow ups in Theorem~\ref{dichotomythmintro}.

\section{Fibrewise surgery}
\label{sec: semi} 
We begin by recalling the context for standard surgery theory. 
Let $M$ be an $m$-dimensional closed smooth manifold. An embedding $S^{k}\hookrightarrow M$ is a \emph{framed $k$-embedding} if it extends to an embedding $f\colon D^{m-k}\times S^{k}\hookrightarrow  M$. In this case the original embedding $S^{k}\hookrightarrow M$ is called the \emph{embedding sphere} and $f$ is called the \emph{framing}. In classical geometric topology (see \cite{Ran, Wal} for instance), a {\it $\mathit{k}$-surgery} on $M$ is the surgery removing a framed $\mathit{k}$-embedding $f: D^{m-k}\times S^{k}\hookrightarrow  M$ and replacing it with $S^{m-k-1}\times D^{k+1}$, with {\it effect} the $m$-dimensional closed smooth manifold
\[
M^\prime:=(M\backslash f(\accentset{\circ}{D}^{m-k}\times  S^{k}))\cup_{S^{m-k-1}\times S^{k}} (S^{m-k-1}\times D^{k+1}).
\]

The following lemma is well-known in standard surgery theory. We include a proof which can be generalized to fibrewise surgery theory in the sequel. 
Let $O(m-k)$ be the $(m-k)$-th real orthogonal group. It acts canonically on $\mathbb{R}^{m-k}$ by applying matrix multiplication.
\begin{lemma}\label{frame-lemma}
Let $h: S^{k}\hookrightarrow  M$ be an embedding. 
\begin{itemize}
\item[(1)] Up to an isotopy relative to $h$, any two framings $f_1$ and $f_2$ differ by a bundle isomorphism
\[
f_{1,2}: D^{m-k} \times S^k\longrightarrow D^{m-k}\times S^k
\] 
over $S^k$ with the property that $f_{1,2}(v, t)= (\sigma(t) v, t)$ for some map $\sigma: S^k \rightarrow O(m-k)$. 

Furthermore, when $M$ is oriented and $f_1$ and $f_2$ are orientation-preserving, the map $\sigma$ factors through the subgroup $SO(m-k)$ of $O(m-k)$.
\item[(2)] The effects of surgeries along two isotopic framings relative to $h$ are diffeomorphic.
\end{itemize}
\end{lemma}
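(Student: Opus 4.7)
The plan is to reduce framings of $h\colon S^k\hookrightarrow M$ to trivializations of the normal bundle $\nu\to S^k$ via the uniqueness of tubular neighborhoods up to ambient isotopy rel $h$. For part (1), the tubular neighborhood theorem says that any framing $f\colon D^{m-k}\times S^k\hookrightarrow M$ extending $h$ is ambient isotopic rel $h$ to a framing whose restriction to each fibre $D^{m-k}\times\{t\}$ is a linear identification with the normal disc of $\nu$ at $h(t)$; in other words, after isotopy one may view $f$ as the exponential of a trivialization of $\nu$. Applying this to $f_1$ and $f_2$ simultaneously, we may assume both are linear trivializations of a common tubular neighborhood of $h(S^k)$. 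Then $f_{1,2}:=f_1^{-1}\circ f_2$ is a bundle automorphism of the trivial bundle $D^{m-k}\times S^k$ covering the identity on $S^k$ and linear on each fibre, and therefore has the form $(v,t)\mapsto(A(t)v,t)$ for some map $A\colon S^k\to GL(m-k,\mathbb{R})$.

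To complete part (1), compose $A$ with the Gram--Schmidt deformation retraction $GL(m-k,\mathbb{R})\simeq O(m-k)$; this produces a homotopy through linear bundle automorphisms from $f_{1,2}$ to $(v,t)\mapsto(\sigma(t)v,t)$ with $\sigma\colon S^k\to O(m-k)$. Each stage of the homotopy is an embedding into the fixed tubular neighborhood, so the homotopy realizes an isotopy of framings rel $h$. When $M$ is oriented and both $f_i$ are orientation-preserving, each $A(t)$ has positive determinant, hence $\sigma$ lands in the identity component $SO(m-k)$.

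For part (2), let $F\colon D^{m-k}\times S^k\times I\hookrightarrow M$ realize an isotopy rel $h$ from $f_0$ to $f_1$. The isotopy extension theorem produces an ambient isotopy $\Phi_t\colon M\to M$ with $\Phi_0=\mathrm{id}_M$ and $\Phi_1\circ f_0=f_1$. Then $\Phi_1$ carries $f_0(\accentset{\circ}{D}^{m-k}\times S^k)$ onto $f_1(\accentset{\circ}{D}^{m-k}\times S^k)$ and restricts to a diffeomorphism of complements that intertwines the boundary parametrizations via $f_1\circ f_0^{-1}|_{S^{m-k-1}\times S^k}$. Gluing this with the identity on $S^{m-k-1}\times D^{k+1}$ along $S^{m-k-1}\times S^k$ produces a diffeomorphism between the two surgery effects. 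The main technical point, and the one I expect to require the most care, is ensuring that the reparametrizations used in part (1) are realized by genuine ambient isotopies of embeddings rather than by abstract bundle maps; this is handled uniformly by carrying out all modifications inside a single fixed tubular neighborhood and invoking the standard smoothing results for exponential-type trivializations.
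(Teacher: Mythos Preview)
Your proof is correct and follows essentially the same approach as the paper: both arguments invoke uniqueness of tubular neighborhoods (rel $h$) to reduce the comparison of framings to a bundle automorphism of $D^{m-k}\times S^k$ over $S^k$, then reduce the structural group from $GL(m-k,\mathbb{R})$ to $O(m-k)$ (you via Gram--Schmidt, the paper via the standard structural-group reduction), and both use isotopy extension for part~(2) to produce a diffeomorphism of complements glued with the identity on $S^{m-k-1}\times D^{k+1}$. The only cosmetic difference is that the paper isotopes $f_1$ to $f_2\circ f_{1,2}$ directly, while you first isotope both $f_i$ into a common linear tubular neighborhood before comparing; these are equivalent formulations of the same tubular-neighborhood uniqueness statement.
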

\begin{proof}
A framed embedding $D^{m-k}\times S^{k}\hookrightarrow  M$ can be viewed as an embedding of the trivial disk bundle 
\[
D^{m-k}\longrightarrow D^{m-k}\times S^{k}\stackrel{p_2}{\longrightarrow}  S^{k}
\]
into $M$ such that it restricts to the prescribed embedding $S^{k}\stackrel{h}{\hookrightarrow}  M$ on the zero section. 
This means that the framed embedding $D^{m-k}\times S^{k}\hookrightarrow  M$ is exactly a closed tubular neighborhood of the embedding core sphere $S^{k}\stackrel{h}{\hookrightarrow}  M$. 

(1) Suppose that $f_1$ and $f_2$ are two framed embeddings that extend $h$. 
By the uniqueness of fhe tubular neighborhood \cite[Chapter 4, Theorems 5.3 and 6.5]{Hir}\footnote{To avoid confusion, we do not adopt the terminology ``isotopy of tubular neighborhoods'' used in \cite[Chapter~4]{Hir}, which is equivalent to the uniqueness of the tubular neighborhood up to an isotopy of embeddings plus a bundle isomorphism.}, there is an isotopy
\[
F: D^{m-k}\times S^{k}\times I\longrightarrow  M
\]
relative to $\{0\}\times S^{k}\stackrel{h}{\hookrightarrow} M$, such that $F_0=f_1$, $F_1(D^{m-k}\times S^{k})=f_2(D^{m-k}\times S^{k})$, and $F_1=f_2\circ f_{1,2}$ for some bundle isomorphism
\[
\xymatrix{
D^{m-k}\times S^{k} \ar[rr]^{f_{1,2}} \ar[dr]_{p_2} && D^{m-k}\times S^{k} \ar[dl]^{p_2}\\
& S^{k}.
}
\]
Accordingly, $f_1$ is isotopic to $f_2\circ f_{1,2}$ relative to $h$. Since the structural group of a real vector bundle can be reduced to the orthogonal group, by a further isotopy, the bundle isomorphism $f_{1,2}$ can be expressed as 
\[
f_{1,2}(v, t)= (\sigma(t) v, t)
\] 
for some map $\sigma: S^{k}\stackrel{}{\longrightarrow} O(m-k)$. Moreover, when~$M$ is oriented and $f_1$ and $f_2$ are orientation-preserving, the corresponding diffeomorphism $f_{1,2}$ becomes orientation-preserving, and so the map~$\sigma$ reduces to a map $S^{k}\stackrel{}{\longrightarrow} SO(m-k)$.

(2) Suppose that $f_1$, $f_2:  D^{m-k}\times S^{k}\hookrightarrow  M$ are two isotopic framed embeddings relative to the given sphere embedding $S^{k}\stackrel{h}{\hookrightarrow} M$. Then there is an isotopy 
\[
F: f_1(D^{m-k}\times S^{k}) \times I \longrightarrow  M
\]
relative to $h(\{0\}\times S^{k})\stackrel{}{\hookrightarrow} M$ from the inclusion $F_0: f_1(D^{m-k}\times S^{k}) \hookrightarrow M$ to the embedding $F_1: f_1(D^{m-k}\times S^{k})\stackrel{f_2\circ f_1^{-1}}\llarrow f_2(D^{m-k}\times S^{k})\hookrightarrow M$. 
By the ambient tubular neighborhood theorem \cite[Chapter 8, Theorem 1.8]{Hir}, the isotopy $F$ can be extended to an ambient isotopy $\widehat{F}: M\times I \longrightarrow M$ from $\widehat{F}_0={\rm id}$. In particular, $\widehat{F}_1$ is an extension of $F_1$, and its restriction gives a diffeomorphism
\[
\widehat{F}_{1c}: M\backslash f_1(\accentset{\circ}{D}^{m-k}\times  S^{k}) \stackrel{\cong}{\longrightarrow} M\backslash f_2(\accentset{\circ}{D}^{m-k}\times  S^{k})
\]
on the complements extending the diffeomorphism $f_2\circ f_1^{-1}: f_1(S^{m-k-1}\times S^{k})\longrightarrow f_2(S^{m-k-1}\times S^{k})$ on the boundaries. 
Then by the gluing theorem \cite[Chapter, Theorem 2.2]{Hir}, we have a diffeomorphism between the effects of surgeries along $f_1$ and $f_2$:
\[
(M\backslash f_1(\accentset{\circ}{D}^{m-k}\times  S^{k}))\cup_{f_1} (S^{m-k-1}\times D^{k+1})
\stackrel{\widehat{F}_{1c}\cup {\rm id}}{\llarrow}
(M\backslash f_2(\accentset{\circ}{D}^{m-k}\times  S^{k}))\cup_{f_2} (S^{m-k-1}\times D^{k+1}).
\] 
\end{proof}

We now develop a context for fibrewise surgery theory. 
Let $V\stackrel{\pi}{\longrightarrow}B$ be the disk bundle of a rank $(m-l-k)$ real vector bundle $\nu$ over an $l$-dimensional closed manifold $B$. 
Let $B\stackrel{s_0}{\hookrightarrow} V$ be the zero section of $V$.
Let $B\times S^{k}\stackrel{h}{\hookrightarrow} M$ be a {\it $B$-family of embedding $k$-spheres}, that is, for every point $b\in B$ the restriction of $h$ to $\{b\}\times S^{k-1}$ is an embedding $k$-sphere. 
Suppose that $g: V\times S^{k}\hookrightarrow  M$ is an embedding such that $g\circ (s_0\times {\rm id})=h$. We call $g$ a {\it fibrewise framed $\mathit{k}$-embedding} over $B$.
\begin{definition}\label{semi-surdef}
Let $V\stackrel{}{\longrightarrow}B$ be a disk bundle. For a fixed $B$-family of embedding $k$-spheres in $M$, a {\it fibrewise $\mathit{k}$-surgery} on $M$ is the surgery removing a fibrewise framed $\mathit{k}$-embedding $g: V\times S^{k}\hookrightarrow  M$ over $B$ and replacing it with $\partial V\times D^{k+1}$, with {\it effect} the $m$-dimensional closed smooth manifold
\[
M^\prime:=(M\backslash g(\accentset{\circ}{V}\times  S^{k}))\cup_{\partial V\times S^{k}} (\partial V\times D^{k+1}).
\]
\end{definition}

To analyze fibrewise framings we need to use gauge groups. Let $G$ be a topological group. For a principal $G$-bundle $P$ over $B$, the {\it gauge group} of~$P$ is the group of $G$-equivariant automorphisms of~$P$ that fix the base $B$. In the special case of a principal bundle associated to a vector bundle $\nu$, the gauge group of the principal $O(m-l-k)$-bundle acts canonically and fibrewisely on $\nu$, and is isomorphic to the automorphism group of $\nu$ modulo a contractible linear group. We will therefore not distinguish these two groups, and denote them by $\mathcal{G}(\nu)$. Furthermore, if $\nu$ is oriented, the gauge group $\mathcal{G}(\nu)$ has a subgroup $\mathcal{G}^{+}(\nu)$, which is the gauge group of the principal $SO(m-l-k)$-bundle of~$\nu$, and is isomorphic to the group of orientation-preserving automorphisms of $\nu$ modulo a contractible linear group.

The following lemma and its proof is a fibrewise version of Lemma \ref{frame-lemma} and its proof:
\begin{lemma}\label{fib-frame-lemma}
Let $h: B\times S^{k}\hookrightarrow  M$ be a $B$-family of embedding $k$-spheres. 
\begin{itemize}
\item[(1)] Up to an isotopy relative to $h$, any two fibrewise framings $g_1$ and $g_2$ differ by a bundle isomorphism 
\[
g_{1,2}: V \times S^k\longrightarrow V\times S^k
\] 
over $B\times S^k$ with the property that $g_{1,2}(v, t)= (\tau(t) v, t)$ for some map $\tau: S^k \rightarrow \mathcal{G}^{}(\nu)$. 

Furthermore, when $M$ and $B$ are oriented, $\nu$ is an oriented bundle, and $g_1$ and $g_2$ are orientation-preserving, the map $\tau$ factors through the subgroup $\mathcal{G}^{+}(\nu)$ of $\mathcal{G}^{}(\nu)$. 
\item[(2)] The effects of fibrewise surgeries along two isotopic fibrewise framings relative to $h$ are diffeomorphic.
\end{itemize}
\end{lemma}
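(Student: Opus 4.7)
The plan is to adapt the proof of Lemma~\ref{frame-lemma} to the fibrewise setting, trading the role of the trivial product $D^{m-k}\times S^{k}$ over $S^{k}$ for the pullback bundle $p_1^{\ast}\nu$ over $B\times S^{k}$, where $p_1\colon B\times S^{k}\to B$ is the projection. The conceptual point is that a fibrewise framed $k$-embedding $g\colon V\times S^{k}\hookrightarrow M$ is precisely a tubular neighborhood of the embedded submanifold $h(B\times S^{k})\subset M$ together with a chosen identification of its normal bundle with $p_1^{\ast}\nu$; two such framings give two such identifications, and their difference should be encoded by a map $S^{k}\to\mathcal{G}(\nu)$.

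For part (1), I would first apply the relative uniqueness of tubular neighborhoods \cite[Chapter 4, Theorems 5.3 and 6.5]{Hir} to the submanifold $h(B\times S^{k})$ to produce an isotopy from $g_1$ to $g_2\circ g_{1,2}$ relative to $h$, where $g_{1,2}\colon V\times S^{k}\to V\times S^{k}$ is a bundle automorphism covering the identity on $B\times S^{k}$. As in the non-fibrewise case, the structural group of the real vector bundle $p_1^{\ast}\nu$ reduces from the general linear group to $O(m-l-k)$ by an isotopy, so by a further isotopy $g_{1,2}$ can be assumed to lie in the gauge group $\mathcal{G}(p_1^{\ast}\nu)$. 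The key step is then to observe that since $g_{1,2}$ is a bundle automorphism of $p_1^{\ast}\nu$ covering the identity on $B\times S^{k}$, for each $t\in S^{k}$ the restriction of $g_{1,2}$ to the slice $V\times\{t\}$ defines an automorphism of the disk bundle $V\to B$, that is, an element of $\mathcal{G}(\nu)$; continuity in $t$ yields the required map $\tau\colon S^{k}\to\mathcal{G}(\nu)$ satisfying $g_{1,2}(v,t)=(\tau(t)v,t)$. In the oriented case, $g_1$ and $g_2$ are orientation-preserving, so each $\tau(t)$ preserves the fibrewise orientation, and consequently $\tau$ factors through $\mathcal{G}^{+}(\nu)$.

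For part (2), the argument is a direct fibrewise analogue of Lemma~\ref{frame-lemma}(2). An isotopy between fibrewise framings relative to $h$ yields an isotopy of the tubular neighborhood images inside $M$ relative to $h(B\times S^{k})$, which by the ambient tubular neighborhood theorem \cite[Chapter 8, Theorem 1.8]{Hir} extends to an ambient isotopy $\widehat{F}\colon M\times I\to M$ with $\widehat{F}_0={\rm id}$. Restricting $\widehat{F}_1$ gives a diffeomorphism $M\backslash g_1(\accentset{\circ}{V}\times S^{k})\stackrel{\cong}{\longrightarrow}M\backslash g_2(\accentset{\circ}{V}\times S^{k})$ that intertwines the two attaching diffeomorphisms on the common boundary $\partial V\times S^{k}$, so the gluing theorem produces a diffeomorphism between the effects of the two fibrewise surgeries.

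The main obstacle throughout is ensuring that the classical uniqueness and ambient isotopy theorems are compatible with the fibrewise structure, but this turns out to be essentially automatic: because the original framings $g_1,g_2$ already carry that structure, the resulting automorphism of $p_1^{\ast}\nu$ over $B\times S^{k}$ decomposes, slice-by-slice in $t\in S^{k}$, into a continuous family of automorphisms of $\nu$ over $B$, which is exactly the map $\tau$ we require.
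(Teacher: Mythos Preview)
Your proposal is correct and follows essentially the same approach as the paper: both recognise that a fibrewise framing is a tubular neighborhood of $h(B\times S^{k})$ with normal bundle identified with $p_1^{\ast}\nu$, invoke the same uniqueness and ambient isotopy theorems from \cite{Hir}, and extract $\tau$ by slicing the resulting bundle automorphism over $B\times S^{k}$ along the $S^{k}$ factor. The paper's argument is nearly identical in structure and cites the same results.
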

\begin{proof}
A fibrewise framed embedding $V\times S^{k}\hookrightarrow  M$ can be viewed as an embedding of the disk bundle 
\[
D^{m-l-k}\longrightarrow V\times S^{k}\stackrel{\pi\times {\rm id}}{\llarrow} B\times S^{k}
\] 
into $M$ such that it restricts to the given embedding $B\times S^{k}\hookrightarrow  M$ on the zero section. 
This means that the fibrewise framed embedding $V\times S^{k}\hookrightarrow  M$ is exactly a closed tubular neighborhood of the embedding core manifold $B\times S^{k}\hookrightarrow  M$. 

(1) Suppose that $g_1$ and $g_2$ are two fibrewise framed embeddings that extend $h$. 
By the uniqueness of the tubular neighborhood \cite[Chapter 4, Theorems 5.3 and 6.5]{Hir}, there is an isotopy
\[
G: V\times S^{k}\times I\longrightarrow  M
\]
relative to $B\times S^{k}\stackrel{h}{\hookrightarrow} M$, such that $G_0=g_1$, $G_1(V\times S^{k})=g_2(V\times S^{k})$, and $G_1=g_2\circ g_{1,2}$ for some bundle isomorphism
\[
\xymatrix{
V\times S^{k} \ar[rr]^{g_{1,2}} \ar[dr]_{\pi\times {\rm id}} && V\times S^{k} \ar[dl]^{\pi\times {\rm id}}\\
& B\times S^{k}.
}
\]
Accordingly, $g_1$ is isotopic to $g_2\circ g_{1,2}$ relative to $h$. By a further isotopy, the bundle isomorphism $g_{1,2}$ can be expressed as 
\[
g_{1,2}(v, t)= (\tau(t) v, t)
\] 
for some map $\tau: S^{k}\stackrel{}{\longrightarrow} \mathcal{G}(\nu)$. Moreover, when $M$ and $B$ are oriented, $\nu$ is an oriented bundle, and $g_1$ and $g_2$ are orientation-preserving, the corresponding diffeomorphism $g_{1,2}$ becomes orientation-preserving, and so the map $\tau$ reduces to a map to the gauge group $\mathcal{G}^{+}(\nu)$.

(2) Suppose that $g_1$, $g_2:  V\times S^{k}\hookrightarrow  M$ are two isotopic fibrewise framed embeddings relative to the prescribed embedding $B\times S^{k}\stackrel{h}{\hookrightarrow} M$. Then there is an isotopy 
\[
G: g_1(V\times S^{k}) \times I \longrightarrow  M
\]
relative to $h(B\times S^{k})\stackrel{}{\hookrightarrow} M$ from the inclusion $G_0: g_1(V\times S^{k}) \hookrightarrow M$ to the embedding $G_1: g_1(V\times S^{k})\stackrel{g_2\circ g_1^{-1}}\llarrow g_2(V\times S^{k})\hookrightarrow M$. 
By the ambient tubular neighborhood theorem \cite[Chapter~8, Theorem 1.8]{Hir}, the isotopy $G$ can be extended to an ambient isotopy $\widehat{G}: M\times I \longrightarrow M$ from $\widehat{G}_0={\rm id}$. In particular, $\widehat{G}_1$ is an extension of $G_1$, and its restriction gives a diffeomorphism
\[
\widehat{G}_{1c}: M\backslash g_1(\accentset{\circ}{V}\times  S^{k}) \stackrel{\cong}{\longrightarrow} M\backslash g_2(\accentset{\circ}{V}\times  S^{k})
\]
on the complements extending the diffeomorphism $g_2\circ g_1^{-1}: g_1(\partial V\times S^{k})\longrightarrow g_2(\partial V\times S^{k})$ on the boundaries. 
Then by the gluing theorem \cite[Chapter, Theorem 2.2]{Hir}, we have a diffeomorphism between the effects of fibrewise surgeries along $g_1$ and $g_2$:
\[
(M\backslash g_1(\accentset{\circ}{V}\times  S^{k}))\cup_{g_1} (\partial V\times D^{k+1})
\stackrel{\widehat{G}_{1c}\cup {\rm id}}{\llarrow}
(M\backslash g_2(\accentset{\circ}{V}\times  S^{k}))\cup_{g_2} (\partial V\times D^{k+1}).
\] 
\end{proof}  

Observe that when $l=0$ and $B$ is a point, a fibrewise $k$-surgery is a standard $k$-surgery, and the map $\tau$ measuring the fibrewise difference of two framings reduces to a map from $S^k$ to $O(m-k)$, or to $SO(m-k)$ in the oriented case.

We will encounter two special fibrewise surgeries, one of which is the fibrewise $0$-surgery in the following example.
\begin{example}[Fibrewise connected sum]
\label{fibre0surex}
Let $M_1$ and $M_2$ be two $m$-dimensional closed manifolds. Let $V\stackrel{}{\longrightarrow}B$ be the disk bundle of a rank $(m-l)$ real vector bundle over an $l$-dimensional closed manifold $B$. Suppose that $g_1: V\stackrel{}{\hookrightarrow} M_1$ and $g_2: V\stackrel{}{\hookrightarrow} M_2$ are two fibrewise framed $0$-embeddings. Define the {\it fibrewise connected sum} of $M_1$ and $M_2$ over $B$ to be the manifold
\[
M_1\mathop{\#}\limits_{B}M_2:= (M_1 \backslash g_1(\accentset{\circ}{V}))\cup_{\partial V}(\partial V\times [0,1])\cup_{\partial V}(M_2 \backslash g_2(\accentset{\circ}{V})) 
\] 
obtained by gluing the two ends of $\partial V\times [0,1]$ to $M_{1}\backslash g_1(\accentset{\circ}{V})$ and $M_2 \backslash g_2(\accentset{\circ}{V})$ by $g_{1}$ and $g_{2}$. 
It is then straightforward to check that $M_1\mathop{\#}\limits_{B}M_2$ is exactly the effect of the fibrewise $0$-surgery on $M_1\amalg M_2$ determined by the fibrewise framed $0$-embedding $g=g_1\amalg g_2: V\times S^{0}\stackrel{}{\hookrightarrow} M_1 \amalg M_2$. In particular, when $B$ is a point then $M_1\mathop{\#}\limits_{B}M_2$ is the usual connected sum $M_1\# M_2$. 

Further, we can define a complex 
\[
 M_1\mathop{\vee}\limits_{B} M_2:=  M_1 \cup_{g_1}(B\times [0,1])\cup_{g_2} M_2
\]
by gluing the two ends of $B\times [0,1]$ to $M_1$ and $M_2$ through the restriction of $g_1$ and $g_2$ to their corresponding zero-sections. It can be viewed as a {\it generalized wedge sum} of $M_1$ and $M_2$ over $B$.
Similarly, define a complex
\[
M_1\mathop{\#}\limits_{V}M_2:= M_1 \cup_{g_1}(V\times [0,1])\cup_{g_2} M_2
\]
by gluing the two ends of $V\times [0,1]$ to $M_1$ and $M_2$ through $g_1$ and $g_2$. 
Then we can define a composite
\[
q: M_1\mathop{\#}\limits_{B}M_2 \stackrel{b}{\hookrightarrow} M_1\mathop{\#}\limits_{V}M_2 \stackrel{\mathfrak{q}}{\longrightarrow}  M_1\mathop{\vee}\limits_{B} M_2, 
\]
where $b$ is the obvious inclusion map and $\mathfrak{q}$ is induced by the bundle projection $V\rightarrow B$ on the neck (that is, on the $V\times [0,1]$ part). 
The map $q$ can be viewed as the {\it fibrewise pinch map} of the fibrewise connected sum $M_1\mathop{\#}\limits_{B}M_2$. In particular, when $B$ is a point, this is the usual pinch map for a connected sum.
\end{example}
\section{Fibrewise projective space}
\label{sec: PBnu}
In standard surgery theory there is the connected sum $M\# \mathbb{C}P^{n}$ of a manifold $M$ with the complex projective space $\mathbb{C}P^{n}$. To formulate a fibrewise analogue as in Example \ref{fibre0surex} we need a ``fibrewise projective space''. We will construct such a space this in this section. In what follows, we will often use the same notation for a bundle and its total space, by a slight abuse of notation. 
 
Let $B$ be an $l$ dimensional connected closed smooth manifold. Let $\nu$ be an $n$-dimensional complex bundle over $B$. To be consistent with the notation in Section \ref{sec: semi}, let
\[\label{pinueq}
D^{2n}\stackrel{}{\longrightarrow} V \stackrel{\pi_\nu}{\longrightarrow} B
\] 
be the disk bundle of $\nu$. Restricting to the corresponding sphere bundle $S\nu=\partial V$ gives a commutative diagram of fibre bundles 
\begin{equation}
\begin{gathered}
\label{Snudiag}
\xymatrix{
S^{2n-1}\ar[r]^{i_\nu} \ar[d]^{\iota} &  \partial V \ar[r]^{s_\nu} \ar[d]^{\iota_{\nu}} & B \ar@{=}[d] \\
D^{2n} \ar[r]   &  V   \ar[r]^{\pi_\nu}               & B
}
\end{gathered}
\end{equation}  
where $\iota$ and $\iota_{\nu}$ are the inclusions, $s_{\nu}$ is defined as $\pi_{\nu}\circ\iota_{\nu}$, 
and $i_{\nu}$ is the inclusion of the fibre. Since~$\nu$ is a complex bundle there is a 
corresponding projective bundle $P\nu$ satisfying a commutative diagram of fibre bundles
\begin{equation}
\begin{gathered}
\label{SPnudiag}
\xymatrix{
S^1 \ar@{=}[r] \ar[d] & S^1 \ar[d]^{}\\
S^{2n-1}\ar[r]^{i_\nu} \ar[d]^{q_0} &  \partial V \ar[r]^{s_\nu} \ar[d]^{q} & B \ar@{=}[d] \\
\mathbb{C}P^{n-1} \ar[r]^{i_p}   &  P\nu   \ar[r]^{p_\nu}               & B,
}
\end{gathered}
\end{equation} 
where $q_{0}$ is the standard quotient, $q$ is the fibrewise quotient, $p_\nu$ is 
the quotient of $s_\nu$, and $i_p$ is the inclusion of the fibre.  

For any $x\in B$, let $P\nu_x$ be the space of all complex lines through the origin in the fibre $\nu_x$ of~$\nu$ over $x$. Then $\nu_x\cong \mathbb{C}^n$, $P\nu_x\cong \mathbb{C}P^{n-1}$, and 
in terms of points, 
\[P\nu=\{(x,L_{x})\mid x\in B\ \mbox{and $L_{x}\in P\nu_x$}\}.\] 
There is a canonical complex line bundle $\lambda_{\nu}$ over $P\nu$ defined by 
\[\lambda_{\nu}=\{(x,L_{x},z)\mid (x,L_{x})\in P\nu\ \mbox{and $z\in L_{x}$}\}.\] 
From the construction, the circle bundle $S\lambda_{\nu}$ of $\lambda_{\nu}$ and the bundle \(\namedright{\partial V}{q}{P\nu}\) coincide: 
\begin{equation}\label{Snu=Slambdaeq}
\partial V= S\lambda_\nu.
\end{equation}
Therefore $q$ factors as
\begin{equation}\label{qfactoreq}
q: \partial V\stackrel{=}{\longrightarrow} S\lambda_{\nu}\stackrel{\iota_\lambda}{\longrightarrow} D\lambda_\nu \stackrel{\pi_\lambda}{\longrightarrow} P\nu,
\end{equation} 
where $D\lambda_{\nu}$ is the disk bundle of $\lambda_{\nu}$, $\pi_\lambda$ is the projection with fibre $D^2$, and $\iota_\lambda$ is the inclusion of the boundary. The space $D\lambda_{\nu}$ 
should be regarded as a thickening of $P\nu$ that will turn certain pushouts into geometric pushouts. 
The fact that $\pi_{\lambda}$ is a homotopy equivalence will be recorded for later. 

\begin{lemma} 
   \label{pilambdaequiv} 
   The map 
   \(\namedright{D\lambda_{\nu}}{\pi_{\lambda}}{P\nu}\) 
   is a homotopy equivalence.~$\qqed$ 
\end{lemma}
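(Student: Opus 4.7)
The plan is to exhibit an explicit homotopy inverse to $\pi_\lambda$ using the zero section of the disk bundle $\lambda_\nu$. Since $\lambda_\nu$ is a complex line bundle over $P\nu$, its disk bundle $D\lambda_\nu \xrightarrow{\pi_\lambda} P\nu$ has fibre $D^2$, and in particular admits a canonical zero section
\[
s_0 \colon P\nu \longrightarrow D\lambda_\nu, \qquad (x,L_x)\longmapsto (x,L_x,0).
\]
By construction $\pi_\lambda\circ s_0 = \mathrm{id}_{P\nu}$, so one composition is the identity on the nose. The task is to show that the other composition $s_0\circ \pi_\lambda$ is homotopic to $\mathrm{id}_{D\lambda_\nu}$.

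For this, I would use the fibrewise scaling homotopy on a disk bundle of a vector bundle: define
\[
H \colon D\lambda_\nu\times I\longrightarrow D\lambda_\nu,\qquad H\bigl((x,L_x,z),t\bigr) = (x,L_x,(1-t)z).
\]
This is well-defined because if $z\in L_x$ with $|z|\leq 1$ then $(1-t)z\in L_x$ with $|(1-t)z|\leq 1$ for all $t\in I$; it is continuous because scalar multiplication is continuous in each fibre and these fibre operations patch to a global continuous operation on the total space of the vector bundle $\lambda_\nu$ (one may verify continuity on local trivialisations, where it reduces to scaling on $D^2$). At $t=0$ we have $H_0=\mathrm{id}_{D\lambda_\nu}$, and at $t=1$ we have $H_1=s_0\circ\pi_\lambda$.

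Hence $\pi_\lambda$ is a homotopy equivalence with homotopy inverse $s_0$. There is no real obstacle here; the only thing to be careful about is that the deformation is genuinely global rather than merely fibrewise, but this is immediate from the vector bundle structure of $\lambda_\nu$, which provides a well-defined fibrewise scalar multiplication on the total space.
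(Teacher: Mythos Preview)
Your proposal is correct; it is the standard argument that the projection of a disk bundle is a homotopy equivalence via the zero section and fibrewise radial contraction. The paper itself omits the proof entirely (the lemma is stated with a $\Box$ and no argument), so you have simply filled in the routine details behind what the authors treat as well-known.
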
 

Let $\eta$ be the canonical complex line bundle over $\mathbb{C}P^{n-1}$. Let $D\eta$ and 
$S\eta$ be the corresponding disk and sphere bundles for $\eta$, and note that $S\eta= S^{2n-1}$.

\begin{lemma}\label{rhonulemma}
The composite $\rho_\nu: D\lambda_\nu  \stackrel{\pi_\lambda}{\longrightarrow} P\nu \stackrel{p_\nu}{\longrightarrow} B$ is a fibre bundle with fibre isomorphic to the disk bundle $D\eta$:
\[\label{rhonueq}
D\eta\stackrel{\jmath_\nu}{\longrightarrow}D\lambda_\nu \stackrel{\rho_\nu}{\longrightarrow} B,
\]
and further its boundary is the sphere bundle $s_\nu: \partial V\rightarrow B$ in~(\ref{SPnudiag}), that is, $s_\nu$ factors as
\[
s_\nu: \partial V\stackrel{\iota_\lambda}{\longrightarrow}  D\lambda_\nu\stackrel{\rho_\nu}{\longrightarrow}  B.
\]
\end{lemma}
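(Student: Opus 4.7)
The plan is to prove local triviality of $\rho_\nu$ directly from a trivializing cover of $\nu$, and then obtain the boundary identification by chasing the factorizations already recorded in~(\ref{SPnudiag}), (\ref{Snu=Slambdaeq}) and~(\ref{qfactoreq}).

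First I would choose an open cover $\{U_\alpha\}$ of $B$ over which $\nu$ is trivial, with transition functions $g_{\alpha\beta}\colon U_{\alpha\beta}\to U(n)$. The projectivization is natural with respect to bundle pullback, so over each $U_\alpha$ there is a canonical identification $P\nu|_{U_\alpha}\cong U_\alpha\times\mathbb{C}P^{n-1}$, and the tautological line bundle $\lambda_\nu$ restricts to the pullback of $\eta$ along the projection $U_\alpha\times\mathbb{C}P^{n-1}\to\mathbb{C}P^{n-1}$. Passing to disk bundles, this gives a fibre-preserving diffeomorphism
\[
D\lambda_\nu|_{p_\nu^{-1}(U_\alpha)}\;\cong\;U_\alpha\times D\eta
\]
under which $\rho_\nu=p_\nu\circ\pi_\lambda$ becomes projection onto $U_\alpha$. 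The transition function $g_{\alpha\beta}$ acts on $\mathbb{C}^n$ and therefore acts naturally on both $\mathbb{C}P^{n-1}$ and on $D\eta$ (via the $U(n)$-action that lifts the action on $\mathbb{C}P^{n-1}$), and these two actions are compatible with the identification $D\lambda_\nu\cong U\times D\eta$ above. This shows that the local trivializations glue coherently and hence $\rho_\nu$ is a locally trivial fibre bundle with fibre $D\eta$.

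For the boundary statement, I would use that the disk bundle of any complex line bundle has sphere bundle as its boundary, so $\partial(D\lambda_\nu)=S\lambda_\nu$. By~(\ref{Snu=Slambdaeq}) this equals $\partial V$, and the inclusion of the boundary is exactly $\iota_\lambda\colon\partial V=S\lambda_\nu\hookrightarrow D\lambda_\nu$. Using the factorization $q=\pi_\lambda\circ\iota_\lambda$ from~(\ref{qfactoreq}) together with $s_\nu=p_\nu\circ q$ from~(\ref{SPnudiag}), we obtain
\[
\rho_\nu\circ\iota_\lambda \;=\; p_\nu\circ\pi_\lambda\circ\iota_\lambda \;=\; p_\nu\circ q \;=\; s_\nu,
\]
which is the desired factorization.

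The main (mild) obstacle is simply being careful about the naturality of the construction $\nu\mapsto(P\nu,\lambda_\nu)$ under pullback, so that trivializations of $\nu$ really do produce trivializations of $\lambda_\nu$ with structure group $U(n)$ acting on $D\eta$ in the standard way. Once this is spelled out, local triviality and coherence of transition functions follow formally and the rest is a diagram chase.
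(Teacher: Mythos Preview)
Your proof is correct and follows essentially the same line as the paper's. The paper is terser: it simply identifies the fibre of $\rho_\nu$ over $x\in B$ as $\{(L_x,z)\in P\nu_x\times D^2\mid z\in L_x\}\cong D\eta$, leaving the local triviality implicit in the naturality of the projectivization construction, whereas you spell it out via a trivializing cover and transition functions. For the boundary statement your diagram chase $\rho_\nu\circ\iota_\lambda=p_\nu\circ\pi_\lambda\circ\iota_\lambda=p_\nu\circ q=s_\nu$ is exactly what the paper does.
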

\begin{proof}
By definition, the fibre of $\rho_\nu$ over a point $x\in B$ consists of the points $(L_x, z)\in P\nu_x \times D^2$ such that $z\in L_x$. This is isomorphic to the disk bundle $D\eta$ of $\eta$. Restricting to the sphere bundle $\partial V=S\lambda_\nu$, the composite 
\[
\partial V\stackrel{=}{\longrightarrow} S\lambda_{\nu}\stackrel{\iota_\lambda}{\longrightarrow} D\lambda_\nu \stackrel{\pi_\lambda}{\longrightarrow} P\nu \stackrel{p_\nu}{\longrightarrow} B
\]
is equal to $p_\nu\circ q=s_\nu$ by (\ref{qfactoreq}) and (\ref{SPnudiag}). 
\end{proof}
From Lemma \ref{rhonulemma} and  (\ref{qfactoreq}), we see that the morphism of fibre bundles over $B$ in (\ref{SPnudiag}) can be factored as a commutative diagram of fibre bundles
\begin{equation}
\begin{gathered}
\label{bVtoDlamdiag}
\xymatrix{
S^{2n-1}\ar[r]^{i_\nu} \ar[d]^{\iota_\eta} &  \partial V \ar[r]^{s_\nu} \ar[d]^{\iota_\lambda} & B \ar@{=}[d] \\
D\eta\ar[r]^{\jmath_\nu} \ar[d]^{\pi_\eta}   & D\lambda_\nu   \ar[r]^{\rho_\nu}  \ar[d]^{\pi_\lambda}               & B\ar@{=}[d]\\
\mathbb{C}P^{n-1} \ar[r]^{i_p}  & P\nu \ar[r]^{p_\nu}  & B,
}
\end{gathered}
\end{equation} 
where $\pi_\eta$ is the projection of the disk bundle of $\eta$, $\iota_\eta$ is the inclusion of the boundary, and $\pi_\eta\circ\iota_\eta=q_0$.
In particular, this implies that $\iota_\lambda$ is fibred over $B$.

Now we construct a fibrewise projective space $P_B\nu$. There is a canonical geometric pushout 
\begin{equation}
\begin{gathered}
\label{PBnudiag1}
\xymatrix{
S^{2n-1}  \ar[r]^{\iota_\eta}        \ar[d]^{\iota}      &   D\eta  \ar[d]^{}   \\
D^{2n} \ar[r]^{}      & \mathbb{C}P^{n}. 
}
\end{gathered}
\end{equation} 
Define the $(l+2n)$-dimensional closed manifold $P_{B}\nu$ and the maps $j_{\nu}$ and $j_{\lambda}$ 
by the geometric pushout 
\begin{equation}
\begin{gathered}
\label{PBnudiag}
\xymatrix{
\partial V \ar[r]^{\iota_\lambda}   \ar[d]^{\iota_\nu} &  D\lambda_\nu \ar[d]^{\jmath_\nu}  \\
V \ar[r]^{\jmath_\lambda}                                         &   P_B\nu. 
}
\end{gathered}
\end{equation}

\begin{lemma}\label{kappanulemma} 
The pushout in (\ref{PBnudiag}) is fibred over $B$ with fibres given by the 
pushout~(\ref{PBnudiag1}). In particular, there exists a fibre bundle
\[\label{kappanueq}
\mathbb{C}P^{n}\stackrel{}{\longrightarrow} P_B\nu \stackrel{\kappa_\nu}{\longrightarrow} B
\]
such that the bundle maps $\pi_\nu$ and $\rho_\nu$ respectively factor as
\[
\pi_\nu:  V\stackrel{\jmath_{\lambda}}{\longrightarrow}  P_B\nu \stackrel{\kappa_\nu}{\longrightarrow}B, \ \ \ ~ 
\rho_\nu:  D\lambda_\nu\stackrel{\jmath_{\nu}}{\longrightarrow}  P_B\nu \stackrel{\kappa_\nu}{\longrightarrow} B.
\]

\end{lemma}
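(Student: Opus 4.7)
The plan is to first construct $\kappa_\nu$ via the universal property of the pushout, and then to establish local triviality by computing the pushout fibrewise over small trivializing opens in $B$.

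For the first step, diagram (\ref{bVtoDlamdiag}) gives the equality $\pi_\nu\circ\iota_\nu=s_\nu=\rho_\nu\circ\iota_\lambda$. Since (\ref{PBnudiag}) is a pushout, the universal property applied to the maps $\pi_\nu\colon V\to B$ and $\rho_\nu\colon D\lambda_\nu\to B$ produces a unique map $\kappa_\nu\colon P_B\nu\to B$ satisfying $\kappa_\nu\circ\jmath_\lambda=\pi_\nu$ and $\kappa_\nu\circ\jmath_\nu=\rho_\nu$. This already gives the two asserted factorizations, so the remaining work is to show $\kappa_\nu$ is a locally trivial fibre bundle with fibre $\mathbb{C}P^n$.

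For the second step, pick any $b\in B$ and a contractible open neighborhood $U\subseteq B$ over which $\nu$ trivializes. Since $V$, $\partial V$ and $D\lambda_\nu$ are all associated to $\nu$ (the last by Lemma \ref{rhonulemma}), each restricts to a product bundle over $U$: namely $V|_U\cong U\times D^{2n}$, $\partial V|_U\cong U\times S^{2n-1}$, and $D\lambda_\nu|_U\cong U\times D\eta$, with the bundle maps $\iota_\nu$ and $\iota_\lambda$ corresponding under these trivializations to $\mathrm{id}_U\times\iota$ and $\mathrm{id}_U\times\iota_\eta$. Since the pushout in (\ref{PBnudiag}) is a geometric pushout, restriction to the open set $\kappa_\nu^{-1}(U)$ commutes with the pushout, and the functor $U\times(-)$ preserves it, so
\[
\kappa_\nu^{-1}(U)\;\cong\;(U\times D^{2n})\cup_{U\times S^{2n-1}}(U\times D\eta)\;\cong\;U\times\bigl(D^{2n}\cup_{S^{2n-1}}D\eta\bigr)\;\cong\;U\times\mathbb{C}P^{n},
\]
where the last identification uses the pushout (\ref{PBnudiag1}). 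Thus $\kappa_\nu$ admits local product trivializations with fibre $\mathbb{C}P^n$.

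For the third step, the transition functions for $\kappa_\nu$ on overlaps $U_\alpha\cap U_\beta$ are inherited from those of $\nu$: the structure group $U(n)$ acts compatibly on each of $D^{2n}$, $S^{2n-1}$ and $D\eta$ (the latter because $\eta$ is the tautological bundle on $\mathbb{C}P^{n-1}$), respecting the inclusions $\iota$ and $\iota_\eta$. Hence $U(n)$ acts on the pushout via the standard linear action on $\mathbb{C}P^n$, and the cocycle defining $\nu$ yields a continuous cocycle of homeomorphisms of $\mathbb{C}P^n$. This exhibits $\kappa_\nu\colon P_B\nu\to B$ as a fibre bundle with fibre $\mathbb{C}P^n$ and structure group $U(n)$. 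The step requiring the most care is the commutation of the pushout with open restriction; it holds because the inclusions $\iota_\nu$ and $\iota_\lambda$ are collared cofibrations, so the pushout in (\ref{PBnudiag}) is built by gluing along a product collar, which localizes well over~$B$.
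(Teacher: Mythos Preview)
Your proof is correct and follows essentially the same approach as the paper: construct $\kappa_\nu$ via the universal property of the pushout, then use that $\iota_\nu$ and $\iota_\lambda$ are fibred over $B$ with fibres $\iota$ and $\iota_\eta$ to conclude that the pushout is fibred with fibre given by the pushout~(\ref{PBnudiag1}). The paper's proof is terse and simply asserts this last implication, whereas you spell out the local-triviality and transition-function arguments explicitly; this extra detail is welcome but not a genuinely different route.
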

\begin{proof}
By~(\ref{Snudiag}), $\pi_{\nu}\circ\iota_{\nu}=s_{\nu}$ and, by~(\ref{bVtoDlamdiag}), 
$\rho_{\nu}\circ\iota_{\lambda}= s_{\nu}$. The universal property of a pushout therefore implies that exists a unique map $\kappa_\nu: P_B\nu\rightarrow  B$ 
such that $\pi_{\nu}=\kappa_{\nu}\circ j_{\lambda}$ and $\rho_{\nu}=\kappa_{\nu}\circ j_{\nu}$. 
Further, by~(\ref{Snudiag}), $\iota_\nu$ is fibred over $B$ with $\iota$ as fibre and, by (\ref{bVtoDlamdiag}), 
$\iota_\lambda$ is fibred over $B$ with $\iota_\eta$ as fibre. Therefore 
\(\namedright{P_{B}\nu}{\kappa_{\nu}}{B}\) 
is a fibre bundle with fibre $\mathbb{C}P^n$. 
\end{proof}

\begin{definition}\label{PBnudef}
Let $\nu$ be an $n$-dimensional complex bundle over an $l$-dimensional closed manifold~$B$.
The $(l+2n)$-dimensional closed manifold $P_B\nu$ defined in (\ref{PBnudiag}) is called the {\it fibrewise projective space} over $B$.
\end{definition}

Let $N$ be an $m$-dimensional closed manifold such that $m=l+2n$.
Suppose $i: V\stackrel{}{\hookrightarrow} N$ is an embedding of a disk bundle $V\longrightarrow B$ over an $l$-dimensional closed manifold $B$. Pairing this with the embedding 
\(\namedright{V}{\jmath_{\lambda}}{P_B{\nu}}\), 
then by Example \ref{fibre0surex} we can define the fibrewise connected sum
\begin{equation}\label{fsumeq}
N\mathop{\#}\limits_{B}P_B\nu:= (N \backslash i(\accentset{\circ}{V}))\cup_{\partial V}(\partial V\times [0,1])\cup_{\partial V}(P_B\nu \backslash \jmath_\lambda(\accentset{\circ}{V}))\cong  (N \backslash i(\accentset{\circ}{V}))\cup_{\partial V} D\lambda_\nu.
\end{equation}
We call it the {\it fibrewise $\mathbb{C}P^n$-stabilization} of $N$. Indeed, when $l=0$ and $B$ is point, $V=D^{2n}$ and by Lemma \ref{kappanulemma} $P_B\nu$ reduces to $\mathbb{C}P^n$. Then
\[
N\mathop{\#}\limits_{\ast }\mathbb{C}P^n= (N \backslash \accentset{\circ}{D^{2n}})\cup_{S^{2n-1}}(S^{2n-1}\times [0,1])\cup_{S^{2n-1}}(\mathbb{C}P^n \backslash \accentset{\circ}{D^{2n}})\cong  (N \backslash \accentset{\circ}{D^{2n}})\cup_{S^{2n-1}}D\eta
\]
is the usual connected sum $N\#\mathbb{C}P^n$, which is the {\it $\mathbb{C}P^n$-stabilization} of $N$ \cite{K2}.


\section{A blow up is a fibrewise $\mathbb{C}P^n$-stabilization}
\label{sec: blowup}
In this section,
we first recall the description in~\cite[Section 2]{LS3} of the blow up construction and then show that it is essentially a fibrewise $\mathbb{C}P^n$-stabilization.

Let $B\hookrightarrow N$ be a codimension $2n$ embedding of connected closed oriented smooth manifolds. 
Suppose that the normal bundle $\nu$ of the embedding has been given a complex structure. By abuse of notation this $n$-dimensional complex bundle is still denoted by $\nu$.
By the classical tubular neighborhood theorem \cite[Theorem 11.1]{MS}, there is a closed neighborhood $V$ of $B$ in $N$ and a diffeomorphism between $V$ and the disk bundle $D\nu$, restricting to a diffeomorphism between $\partial V$ and the sphere bundle $S\nu$, and whose zero section is $B$.
We shall not distinguish between $V$ and $D\nu$ in what follows.
Let $N_c$ be the closure of the complement of $V$ in $N$. Then $\partial N_c=\partial V$, and there is a geometric pushout
\begin{equation}
\begin{gathered}
\label{Npushoutdiag}
\xymatrix{
\partial V \ar[r]^{\iota_\nu} \ar[d]^{\iota_c} &
V\ar[d]^{i} \\
N_c\ar[r] &
N,
}
\end{gathered}
\end{equation} 
where $i$ is the embedding, and $\iota_c$ and $\iota_\nu$ are the inclusions of the respective boundaries.

Replacing the inclusion of the boundary $\partial V\stackrel{\iota_{\nu}}{\longrightarrow} V$ 
in~(\ref{Npushoutdiag}) with the projectivization $\partial V\stackrel{q}{\longrightarrow} P\nu$, define 
the space $\widetilde{N}$ and the maps $j$ and $r$ by the pushout  
\begin{equation}
\begin{gathered}
\label{blowuppushoutdiag}
\xymatrix{
\partial V \ar[r]^{q} \ar[d]^{\iota_c} &
P\nu\ar[d]^{j} \\
N_c\ar[r]^{r} &
\widetilde{N}. 
}
\end{gathered}
\end{equation} 
The space $\widetilde{N}$ is the {\it topological blow up of $N$ along $B$}. Observe that this pushout 
determines the homeomorphism type of the blow up.

Notice that in (\ref{blowuppushoutdiag}), the map $q$ is not the inclusion of a 
boundary so the pushout defining $\widetilde{N}$ is not a geometric pushout. 
To recover the geometry, consider Diagram (\ref{bVtoDlamdiag}) as a refinement of Diagram~(\ref{SPnudiag}). 
In particular, the projection $q$ factors as
\[
q: \partial V\stackrel{\iota_\lambda}{\longrightarrow} D\lambda_\nu \stackrel{\pi_\lambda}{\longrightarrow} P\nu,
\]
where $\iota_\lambda$ is the inclusion of the boundary of the disk bundle $D\lambda_\nu$ and the bundle projection $\pi_\lambda$ of $D\lambda_\nu$ is a homotopy equivalence.
Accordingly, in Diagram (\ref{blowuppushoutdiag}) we can replace $q$ by $\iota_\lambda$ to obtain an alternative definition of the blow up of $N$ along $B$ as a geometric pushout
\begin{equation}
\begin{gathered}
\label{blowuppushoutdiag2}
\xymatrix{
\partial V \ar[r]^{\iota_\lambda} \ar[d]^{\iota_c} &
D\lambda_\nu       \ar[d]^{r_j} \\
N_c\ar[r]^{r} &
\widetilde{N}  
}
\end{gathered}
\end{equation} 
that defines the manifold $\widetilde{N}$ and the maps $r_{j}$ and $r$. The smooth manifold $\widetilde{N}$ is called the {\it geometric blow up of $N$ along $B$}.

\begin{remark}\label{top-geo-blow-rmk} 
The topological and geometric blow ups in~(\ref{blowuppushoutdiag}) and~(\ref{blowuppushoutdiag2}) are different. Nevertheless, the homotopy equivalence~$\pi_{\lambda}$ implies that they are canonically homotopy equivalent, and $r_j$ is homotopic to the composite 
\[\label{rjdef}
D\lambda_\nu\stackrel{\pi_\lambda}{\longrightarrow} P\nu\stackrel{j}{\longrightarrow} \widetilde{N}.
\] 
In terms of the paper, Part \ref{part1} focuses on the 
geometric blow up in~(\ref{blowuppushoutdiag2}), while Part \ref{part2} focuses on the 
topological blow up in~(\ref{blowuppushoutdiag}). 
However, in the Introduction of the paper, we do not distinguish these two blow ups, as all the results are about homotopy types. 
\end{remark}

Diagram~(\ref{blowuppushoutdiag2}) is equivalent to saying that the geometric blow up is defined as 
$\widetilde{N}:=N_c\cup_{\partial V}D\lambda_\nu$, where the boundaries of $N_c$ and $D\lambda_\nu$ are glued together. This determines the diffeomorphism type of the geometric blow up. 
Further, up to a diffeomorphism thickening the collars of the boundaries, $\widetilde{N}$ may be regarded as being obtained by gluing together the boundaries of $N_{c}$ and $D\lambda_{\nu}$ along 
the ends of $\partial V\times [0,1]$. 
By (\ref{fsumeq}), the latter is exactly the definition of the fibrewise 
connected sum $N\mathop{\#}\limits_{B} P_{B}\nu$ formed from the embeddings 
\(\namedright{V}{i}{N}\) 
and 
\(\namedright{V}{\jmath_{\lambda}}{P_B{\nu}}\). 

\begin{lemma}\label{blowup=sumlemma}
The geometric blow up $\widetilde{N}$ is diffeomorphic to the fibrewise connected sum of $N$ with $P_B\nu$,
\[\hspace{2.8cm}
\widetilde{N}\cong N\mathop{\#}\limits_{B}P_B\nu \cong  (N \backslash i(\accentset{\circ}{V}))\cup_{\partial V}(\partial V\times [0,1])\cup_{\partial V}(P_B\nu \backslash \jmath_{\lambda}(\accentset{\circ}{V})). 
\hspace{2.8cm}\Box
\]
\end{lemma}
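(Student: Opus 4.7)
The plan is to prove the diffeomorphism by reading off both sides from their pushout presentations and matching the gluings; the paragraph preceding the statement has already reduced the task to verifying that the fibrewise connected sum $N\mathop{\#}\limits_{B}P_B\nu$, built from the embeddings $i\colon V\hookrightarrow N$ and $\jmath_\lambda\colon V\hookrightarrow P_B\nu$, coincides with the geometric pushout $\widetilde{N} = N_c \cup_{\partial V} D\lambda_\nu$ recorded in~(\ref{blowuppushoutdiag2}).

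First I will unpack the right-hand side using the definition~(\ref{fsumeq}) applied to $i$ and $\jmath_\lambda$, which reads
\[
N\mathop{\#}\limits_{B}P_B\nu = (N\backslash i(\accentset{\circ}{V}))\cup_{\partial V}(\partial V\times [0,1])\cup_{\partial V}(P_B\nu\backslash \jmath_\lambda(\accentset{\circ}{V})),
\]
with end gluings given by $i|_{\partial V}=\iota_\nu$ and $\jmath_\lambda|_{\partial V}=\jmath_\lambda\circ\iota_\nu$. To identify the last factor, I will invoke the defining pushout~(\ref{PBnudiag}) of $P_B\nu$: since $P_B\nu$ is obtained from $V$ and $D\lambda_\nu$ by gluing along the common boundary $\partial V$ via $\iota_\nu$ and $\iota_\lambda$, removing the open disk bundle $\jmath_\lambda(\accentset{\circ}{V})$ leaves precisely $D\lambda_\nu$ with boundary inclusion $\iota_\lambda$. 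Substituting produces the homeomorphism recorded in~(\ref{fsumeq}),
\[
N\mathop{\#}\limits_{B}P_B\nu \cong (N\backslash i(\accentset{\circ}{V}))\cup_{\partial V} D\lambda_\nu.
\]

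To finish, I will identify $N_c$ with $N\backslash i(\accentset{\circ}{V})$ via the tubular neighborhood description used in the lead-up to~(\ref{Npushoutdiag}), noting that this identification sends $\iota_c$ to the inclusion of the boundary of the complement. Combining the two identifications, both $N\mathop{\#}\limits_{B}P_B\nu$ and $\widetilde{N}$ are realized as $N_c\cup_{\partial V} D\lambda_\nu$ with gluing maps $\iota_c$ and $\iota_\lambda$, so they are canonically diffeomorphic. I expect no serious obstacle; the only technicality is the collar-thickening step that absorbs the cylinder $\partial V\times [0,1]$ into the adjacent boundary collars of $N_c$ and $D\lambda_\nu$ to produce a smooth manifold, and this is a routine invocation of the gluing theorem \cite[Chapter 8, Theorem 2.2]{Hir} already used in Section~\ref{sec: semi} and silently built into the definition of the fibrewise connected sum.
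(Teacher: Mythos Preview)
Your proposal is correct and follows essentially the same route as the paper: the paragraph immediately preceding the lemma already identifies $\widetilde{N}=N_c\cup_{\partial V}D\lambda_\nu$ from~(\ref{blowuppushoutdiag2}), invokes collar-thickening to insert the cylinder $\partial V\times[0,1]$, and then cites~(\ref{fsumeq}) to recognize the result as $N\mathop{\#}\limits_{B}P_B\nu$. Your version simply unpacks the second isomorphism in~(\ref{fsumeq}) (that $P_B\nu\backslash\jmath_\lambda(\accentset{\circ}{V})=D\lambda_\nu$) explicitly via the pushout~(\ref{PBnudiag}), which the paper had already absorbed into that equation.
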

From now on, we will not distinguish the geometric blow up $\widetilde{N}$ and the fibrewise connected sum $N\mathop{\#}\limits_{B}P_B\nu$.

\begin{example}[Blow up at a point]\label{blowuppointex}
Suppose $B=\ast$ is a point in the above construction. The diagrams (\ref{Npushoutdiag}), (\ref{blowuppushoutdiag}) and (\ref{blowuppushoutdiag2}) respectively become
\[
\begin{gathered}
\label{sumexdiag}
\xymatrix{
S^{2n-1} \ar[r] \ar[d]^{} &  D^{2n}\ar[d]    & S^{2n-1} \ar[r]^{q_0} \ar[d]^{} & \mathbb{C} P^{n-1}\ar[d]^{}   &  S^{2n-1} \ar[r]^{\iota_\eta} \ar[d]^{} & D\eta\ar[d]^{}
\\
N_0\ar[r] &  N,                                       & N_0\ar[r]^{} &\widetilde{N},                               & N_0\ar[r]^{} &\widetilde{N},
}
\end{gathered}
\]
where $N_0$ is the manifold with a small disc removed, $\eta$ is the canonical complex line bundle of $\mathbb{C}P^{n-1}$, and $\iota_\eta$ is the inclusion of the sphere bundle $S\eta\cong S^{2n-1}$ into the disk bundle $D\eta$.
From the definition of the connected sum of manifolds, we see that the geometric and topological blow ups at a point are diffeomorphic and 
\[
\widetilde{N}\cong N\# \mathbb{C}P^n.
\]
Hence, we will not distinguish them.

The special case of a blow up at a point was studied by Duan \cite{D1} from a geometric point of view and by the authors~\cite{HT2} from a homotopy theoretic point of view. 
\end{example}


\section{A canonical circle bundle over a blow up}
\label{sec: hatPBnu}
In this section, we construct a canonical circle bundle over the geometric blow up $\widetilde{N}=N\mathop{\#}\limits_{B}P_B\nu$ of $N$ along~$B$ defined by (\ref{blowuppushoutdiag2}), and show that the total manifold of this circle bundle is the effect of a special fibrewise $1$-surgery on a product manifold. 

To construct a circle bundle we look for an appropriate map 
\(\namedright{\widetilde{N}}{}{\mathbb{C}P^{\infty}}\). 
This will be done by progressing through similar maps for intermediate spaces. From~(\ref{Npushoutdiag}), 
(\ref{PBnudiag}), (\ref{blowuppushoutdiag}) and (\ref{blowuppushoutdiag2}) respectively, there are pushouts 
\begin{equation} 
  \label{4pos} 
  \diagram 
    \partial V\rto^-{\iota_{\nu}}\dto^{\iota_{c}} & V\dto 
        & \partial V\rto^-{\iota_{\lambda}}\dto^{\iota_{\nu}} & D\lambda_{\nu}\dto^{\jmath_{\nu}} 
        & \partial V\rto^-{q}\dto^{\iota_{c}} & P\nu\dto^{j} 
        &  \partial V\rto^-{\iota_{\lambda}}\dto^{\iota_{c}} & D\lambda_{\nu}\dto^{r_{j}} \\ 
    N_{c}\rto & N & V\rto^-{\jmath_{\lambda}} & P_{B}\nu & N_{c}\rto^-{r} & \widetilde{N} & N_{c}\rto^-{r} & \widetilde{N}.  
  \enddiagram 
\end{equation}  
From left to right, the first, second and fourth are geometric pushouts while the third is a topological pushout. 
The fourth is intended as a geometric pushout replacement for the third topological pushout, noting 
there is a homotopy equivalence 
\(\namedright{D\lambda_{\nu}}{\pi_{\lambda}}{P\nu}\). 
The second is a modification of the first by replacing $\iota_{c}$ by $\iota_{\lambda}$, and the fourth 
is a modification of the first by replacing $\iota_{\nu}$ by $\iota_{\lambda}$. We will show there is a canonical map 
\(\namedright{P\nu}{e}{\mathbb{C}P^{\infty}}\), 
use the homotopy equivalence $\pi_{\lambda}$ to translate this to a canonical map 
\(\namedright{D\lambda_{\nu}}{}{\mathbb{C}P^{\infty}}\), 
then use the pushout property of the second and fourth pushouts to obtain ``canonical" maps 
\(\namedright{P_{B}\nu}{}{\mathbb{C}P^{\infty}}\) 
and 
\(\namedright{\widetilde{N}}{}{\mathbb{C}P^{\infty}}\). 
The latter will produce the circle bundle over the geometric blow up $\widetilde{N}$ that we are after. 
This circle bundle will then be examined in terms of the identification of $\widetilde{N}$ as 
the fibrewise connected sum $N\conn_{B} P_{B}\nu$. 

We first produce a map 
\(\namedright{P\nu}{}{\mathbb{C}P^{\infty}}\). 
Consider the projective bundle 
$\mathbb{C}P^{n-1}\stackrel{i_p}{\longrightarrow} P\nu\stackrel{p_\nu}{\longrightarrow} B$ 
of $\nu$. By the standard argument for projective bundles \cite[Section 17.2]{Hus},
the canonical complex line bundle $\lambda_\nu$ of $P\nu$ is a direct summand of the pullback bundle $p_\nu^\ast(\nu)$, and the Euler class $e:=e(\lambda_\nu)\in H^2(P\nu)$ has the property that its restriction to each fibre of the projective bundle is the generator $c\in H^2(\mathbb{C}P^{n-1})$, where $c$ is the Euler class of the canonical line bundle $\eta$ of $\mathbb{C}P^{n-1}$. The Leray-Hirsch theorem \cite[Theorem 1.1 in Section 17.1]{Hus} then implies that there is an isomorphism of $H^\ast(B)$-modules
\begin{equation}\label{LH-Pnueq}
H^\ast(P\nu) \cong H^\ast(B)\{1, e, \cdots, e^{n-1}\}.
\end{equation} 
Abusing notation, the class $e\in H^{2}(P\nu)$ is represented by a map 
\(e\colon\namedright{P\nu}{}{\mathbb{C}P^{\infty}}\). 

Compare this to the projective bundle $\mathbb{C}P^{n-1}\stackrel{i_p}{\longrightarrow} P_\nu \stackrel{p_\nu}{\longrightarrow} B$ in~(\ref{SPnudiag}).
Since the Euler class $e\in H^2(P_\nu)$ restricts to a generator $c\in H^2(\mathbb{C}P^{n-1})$ on each fibre, 
there is a commutative diagram 
\begin{equation} 
  \label{ecdgrm} 
  \diagram 
       \mathbb{C}P^{n-1}\rto^-{i_{p}}\drto_{c} & P\nu\dto^{e} \\ 
       & \mathbb{C}P^{\infty}. 
  \enddiagram 
\end{equation}

\begin{lemma} 
   \label{esphere} 
   There is a map 
   \(\namedright{S^{2}}{}{P\nu}\) 
   with the property that the composite 
   \(\nameddright{S^{2}}{}{P\nu}{e}{\mathbb{C}P^{\infty}}\) 
   induces an isomorphism in degree $2$ cohomology. 
\end{lemma}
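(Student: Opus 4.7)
The plan is to produce the desired map $S^{2}\rightarrow P\nu$ through a fibre of the projective bundle and then reduce the cohomological claim to the standard fact about $\mathbb{C}P^{1}\hookrightarrow\mathbb{C}P^{\infty}$.

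Concretely, I would first observe that the case of interest has $n\geq 2$, so there is a linear embedding $\mathbb{C}P^{1}\hookrightarrow\mathbb{C}P^{n-1}$. Fix a basepoint $b\in B$ so that the fibre inclusion $i_{p}\colon\mathbb{C}P^{n-1}\hookrightarrow P\nu$ is specified. Define
\[
\sigma\colon S^{2}=\mathbb{C}P^{1}\hookrightarrow\mathbb{C}P^{n-1}\stackrel{i_{p}}{\hookrightarrow}P\nu.
\]
This is the candidate map.

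Next, to compute the effect on $H^{2}$, I would invoke the commutative triangle~(\ref{ecdgrm}), which says $e\circ i_{p}\simeq c$. Postcomposing $\sigma$ with $e$ therefore gives a composite homotopic to $\mathbb{C}P^{1}\hookrightarrow\mathbb{C}P^{n-1}\xrightarrow{c}\mathbb{C}P^{\infty}$. By definition $c$ classifies the generator of $H^{2}(\mathbb{C}P^{n-1};\mathbb{Z})$, that is, it is a representative of the standard inclusion $\mathbb{C}P^{n-1}\hookrightarrow\mathbb{C}P^{\infty}$ on the level of $2$-skeleta. Restricting further along $\mathbb{C}P^{1}\hookrightarrow\mathbb{C}P^{n-1}$ therefore recovers the canonical inclusion $\mathbb{C}P^{1}\hookrightarrow\mathbb{C}P^{\infty}$, which is a $3$-equivalence and in particular induces an isomorphism on $H^{2}(-;\mathbb{Z})$.

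There is no real obstacle beyond making these identifications precise: the heart of the matter, namely that $e$ restricts to a generator of $H^{2}$ on each fibre of $p_{\nu}$, is already encoded in the Leray--Hirsch decomposition~(\ref{LH-Pnueq}) and the triangle~(\ref{ecdgrm}). The only place to be careful is the hypothesis $n\geq 2$ needed to have a nontrivial inclusion $\mathbb{C}P^{1}\subseteq\mathbb{C}P^{n-1}$; in the degenerate case $n=1$, $P\nu=B$ and a separate (and trivial) construction would be required, but this case is not relevant to the main blow-up applications, where $l\leq 2n-4$ forces $n\geq 2$.
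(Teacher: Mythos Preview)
Your proof is correct and follows essentially the same approach as the paper: both construct the map as the composite $S^{2}\cong\mathbb{C}P^{1}\hookrightarrow\mathbb{C}P^{n-1}\xrightarrow{i_{p}}P\nu$ (the paper calls the first inclusion ``the inclusion of the bottom cell'') and then use the triangle~(\ref{ecdgrm}) to reduce the cohomological claim to the fact that $c$ restricts to a generator on $\mathbb{C}P^{1}$. Your remark about the degenerate case $n=1$ is a nice addition but, as you note, is not needed for the paper's applications.
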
 

\begin{proof}
Consider the composite 
\(\namedddright{S^{2}}{a'}{\mathbb{C}P^{n-1}}{i_{p}}{P\nu}{e}{\mathbb{C}P^{\infty}}\), 
where $a'$ is the inclusion of the bottom cell. By~(\ref{ecdgrm}), $e\circ i_{p}$ equals $c$. 
Since $a'$ and $c$ both induce an isomorphism in degree~$2$ cohomology, so does the 
composite $e\circ i_{p}\circ a'$. Therefore, taking $a=i_{p}\circ a'$ we obtain a map 
\(\namedright{S^{2}}{a}{P\nu}\) 
with the property that the composite 
\(\nameddright{S^{2}}{a}{P\nu}{e}{\mathbb{C}P^{\infty}}\) 
induces an isomorphism in degree $2$ cohomology. 
\end{proof} 

Next, the map 
\(\namedright{P\nu}{e}{\mathbb{C}P^{\infty}}\) 
is used to construct a map 
\(\namedright{P_{B}\nu}{}{\mathbb{C}P^{\infty}}\). 
Recall from (\ref{SPnudiag}, \ref{Snu=Slambdaeq}) the projections $q: \partial V\longrightarrow P\nu$ and $q_0: S^{2n-1}\longrightarrow \mathbb{C}P^{n-1}$ are the sphere bundles of the complex line bundles $\lambda_\nu$ and $\eta$ respectively. 
Then as a complex line bundle is classified by its Euler class, we see that the circle bundles $q$ and $q_{0}$ are induced by $e$ and $c$ respectively, that is, there are homotopy fibrations 
\begin{equation} 
  \label{ecfibrations} 
  \nameddright{\partial V}{q}{P\nu}{e}{\mathbb{C}P^{\infty}},\qquad
      \nameddright{S^{2n-1}}{q_{0}}{\mathbb{C}P^{n-1}}{c}{\mathbb{C}P^{\infty}}. 
\end{equation} 
Consider the second pushout in~(\ref{4pos}), the geometric pushout defining $P_{B}\nu$. We have the composite
\[
D\lambda_\nu\stackrel{\pi_\lambda}{\longrightarrow}P\nu\stackrel{e}{\longrightarrow} \mathbb{C}P^\infty.
\] 
Let $\varepsilon: V\rightarrow \mathbb{C}P^\infty$ be the trivial map to the base point.

\begin{lemma}\label{hatelemma} 
There is a map 
\(\namedright{P_{B}\nu}{\widehat{e}}{\mathbb{C}P^{\infty}}\) 
satisfying a homotopy pushout diagram 
\[
\begin{gathered}
\xymatrix{ 
   \partial V \ar[r]^{\iota_\lambda}   \ar[d]^{\iota_\nu}              &  D\lambda_\nu \ar[d]^{\jmath_\nu} \ar@/^/[ddr]^{e\circ\pi_\lambda} &  \\ 
   V \ar[r]^{\jmath_\lambda} \ar@/_/[drr]_{\varepsilon} &  P_B\nu \ar@{.>}[dr]^(0.4){\widehat{e}} & \\ 
   & & \mathbb{C}P^\infty. 
   }  
\end{gathered}
\] 
Consequently, there exists a class $\widehat{e}\in H^2(P_B\nu)$ such that $\jmath_\nu^\ast(\widehat{e})=\pi_\lambda^\ast(e)$ and $\jmath_\lambda^\ast(\widehat{e})=0$. 
\end{lemma}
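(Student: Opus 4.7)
The plan is to produce $\widehat{e}$ by applying the semi-universal property of homotopy pushouts, as recorded in diagram~(\ref{hpushout-univ}), to the geometric pushout defining $P_{B}\nu$. Since the pushout in~(\ref{PBnudiag}) is a geometric pushout, Lemma~\ref{3pushoutlemma} tells us it is also a homotopy pushout, so the semi-universal property is available. The two ``outer'' maps to be extended are $e\circ \pi_{\lambda}\colon D\lambda_{\nu}\to\mathbb{C}P^{\infty}$ and the constant map $\varepsilon\colon V\to\mathbb{C}P^{\infty}$, and the only hypothesis to check is that their restrictions along the two legs of the pushout are homotopic.

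The key step is to verify that the composites
\[
\partial V \xrightarrow{\iota_{\lambda}} D\lambda_{\nu} \xrightarrow{\pi_{\lambda}} P\nu \xrightarrow{e} \mathbb{C}P^{\infty}
\quad\text{and}\quad
\partial V \xrightarrow{\iota_{\nu}} V \xrightarrow{\varepsilon} \mathbb{C}P^{\infty}
\]
agree up to homotopy. For the second composite this is immediate, since $\varepsilon$ is constant. For the first, the factorization~(\ref{qfactoreq}) gives $\pi_{\lambda}\circ\iota_{\lambda}=q$, reducing it to $e\circ q$; and by the first homotopy fibration in~(\ref{ecfibrations}), $e\circ q$ is null-homotopic. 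Hence both restrictions are null-homotopic and in particular homotopic, so the semi-universal property~(\ref{hpushout-univ}) produces a map $\widehat{e}\colon P_{B}\nu\to\mathbb{C}P^{\infty}$ making the claimed diagram homotopy commute, i.e.\ $\widehat{e}\circ\jmath_{\nu}\simeq e\circ\pi_{\lambda}$ and $\widehat{e}\circ\jmath_{\lambda}\simeq\varepsilon$.

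Finally, identifying $[P_{B}\nu,\mathbb{C}P^{\infty}]$ with $H^{2}(P_{B}\nu)$, the class $\widehat{e}$ is represented by the map just constructed, and the cohomological identities $\jmath_{\nu}^{\ast}(\widehat{e})=\pi_{\lambda}^{\ast}(e)$ and $\jmath_{\lambda}^{\ast}(\widehat{e})=0$ follow directly from naturality together with the two homotopies above (the second because $\varepsilon$ is null-homotopic and therefore induces zero on $H^{2}$). The only subtle point is the usual one concerning the semi-universal property: the map $\widehat{e}$ need not be unique, but uniqueness is not asserted, and this suffices for the applications in the sequel. I do not anticipate any real obstacle here; the content of the lemma is essentially that the Euler class $e$ on $P\nu$ extends to $P_{B}\nu$ trivially over the $V$-side of the fibrewise connected sum, and this is enforced automatically by the fact that the total space $\partial V$ of the circle bundle classified by $e$ is null-homotopically mapped into $\mathbb{C}P^{\infty}$.
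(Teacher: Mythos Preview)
Your proof is correct and follows essentially the same route as the paper: both recognize the geometric pushout defining $P_B\nu$ as a homotopy pushout, verify that $e\circ\pi_\lambda\circ\iota_\lambda=e\circ q$ is null-homotopic via the fibration~(\ref{ecfibrations}) while $\varepsilon\circ\iota_\nu$ is trivially null-homotopic, and then invoke the semi-universal property to produce $\widehat{e}$. Your added remarks on the cohomological interpretation and the non-uniqueness of $\widehat{e}$ are accurate but not present in the paper's shorter write-up.
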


\begin{proof}
A geometric pushout is always a homotopy pushout, so the geometric pushout 
defining $P_{B}\nu$ is a homotopy pushout. Since $\varepsilon$ is the trivial map, 
so is $\varepsilon\circ\iota_{\nu}$. On the other hand,  
by~(\ref{qfactoreq}), $q=\pi_{\lambda}\circ\iota_{\lambda}$, so the composite 
$e\circ\pi_{\lambda}\circ\iota_{\lambda}=e\circ q$ is null homotopic by~(\ref{ecfibrations}). Therefore 
$\varepsilon\circ\iota_{\nu}\simeq e\circ\pi_{\lambda}\circ\iota_{\lambda}$, so a pushout map 
\(\namedright{P_{B}\nu}{\widehat{e}}{\mathbb{C}P^{\infty}}\) 
exists that makes the entire diagram in the statement of the lemma homotopy commute.
\end{proof}

\begin{lemma} 
   \label{hatesphere} 
   There is a map 
   \(\namedright{S^{2}}{}{P_{B}\nu}\) 
   with the property that the composite 
   \(\nameddright{S^{2}}{}{P_{B}\nu}{\widehat{e}}{\mathbb{C}P^{\infty}}\) 
   induces an isomorphism in degree $2$ cohomology.  
\end{lemma}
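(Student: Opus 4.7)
The plan is to lift the map produced by Lemma~\ref{esphere} through the homotopy equivalence $\pi_{\lambda}$ and then push it forward into $P_{B}\nu$ via $\jmath_{\nu}$.

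First, by Lemma~\ref{esphere} there is a map $a\colon S^{2}\longrightarrow P\nu$ such that the composite
\(\nameddright{S^{2}}{a}{P\nu}{e}{\mathbb{C}P^{\infty}}\)
induces an isomorphism on $H^{2}$. By Lemma~\ref{pilambdaequiv}, the bundle projection $\pi_{\lambda}\colon D\lambda_{\nu}\longrightarrow P\nu$ is a homotopy equivalence; let $\sigma\colon P\nu\longrightarrow D\lambda_{\nu}$ be a homotopy inverse, so that $\pi_{\lambda}\circ\sigma\simeq\mathrm{id}_{P\nu}$. Define
\[
\widehat{a}\colon S^{2}\stackrel{a}{\longrightarrow} P\nu\stackrel{\sigma}{\longrightarrow} D\lambda_{\nu}\stackrel{\jmath_{\nu}}{\longrightarrow} P_{B}\nu,
\]
where $\jmath_{\nu}\colon D\lambda_{\nu}\longrightarrow P_{B}\nu$ is the map from the pushout in~(\ref{PBnudiag}).

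Now it remains to verify that $\widehat{e}\circ\widehat{a}$ induces an isomorphism on $H^{2}$. By Lemma~\ref{hatelemma}, the homotopy pushout there gives $\widehat{e}\circ \jmath_{\nu}\simeq e\circ\pi_{\lambda}$. Hence
\[
\widehat{e}\circ\widehat{a} \;=\; \widehat{e}\circ\jmath_{\nu}\circ\sigma\circ a \;\simeq\; e\circ\pi_{\lambda}\circ\sigma\circ a \;\simeq\; e\circ a,
\]
and the right side induces an isomorphism on degree~$2$ cohomology by Lemma~\ref{esphere}.

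There is no substantive obstacle: all the ingredients are already in place. The one point worth flagging is the appeal to the homotopy inverse $\sigma$ of $\pi_{\lambda}$, which is legitimate because $\pi_{\lambda}$ is a genuine homotopy equivalence (Lemma~\ref{pilambdaequiv}) and we only need a map into $P_{B}\nu$ well-defined up to homotopy. Alternatively, one could bypass $\sigma$ by noting that $a$ factors (up to homotopy) through $\pi_{\lambda}$ since $S^{2}$ is a CW-complex, producing $\tilde{a}\colon S^{2}\to D\lambda_{\nu}$ with $\pi_{\lambda}\circ\tilde{a}\simeq a$, and then setting $\widehat{a}=\jmath_{\nu}\circ\tilde{a}$; the computation is identical.
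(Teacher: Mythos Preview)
Your proof is correct and follows essentially the same approach as the paper: lift the map from Lemma~\ref{esphere} through the homotopy equivalence $\pi_{\lambda}$ to obtain a map $S^{2}\to D\lambda_{\nu}$, compose with $\jmath_{\nu}$, and use Lemma~\ref{hatelemma} to identify $\widehat{e}\circ\jmath_{\nu}\simeq e\circ\pi_{\lambda}$. The only cosmetic difference is that you name the homotopy inverse $\sigma$ explicitly, whereas the paper simply asserts the existence of the lifted map.
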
 

\begin{proof} 
By Lemma~\ref{esphere}, there is a map 
\(\namedright{S^{2}}{}{P\nu}\) 
with the property that the composite 
\(\nameddright{S^{2}}{}{P\nu}{e}{\mathbb{C}P^{\infty}}\) 
induces an isomorphism in degree $2$ cohomology. By Lemma~\ref{pilambdaequiv}, there is a 
homotopy equivalence 
\(\namedright{D\lambda_{\nu}}{\pi_{\lambda}}{P\nu}\). 
Therefore there is a map 
\(\namedright{S^{2}}{}{D\lambda_{\nu}}\) 
with the property that the composite 
\(\llnameddright{S^{2}}{}{D\lambda_{\nu}}{e\circ\pi_{\lambda}}{\mathbb{C}P^{\infty}}\) 
induces an isomorphism in degree $2$ cohomology. By Lemma~\ref{hatelemma}, 
$e\circ\pi_{\lambda}\simeq\widehat{e}\circ\jmath_{\nu}$. Therefore the composite 
\(a\colon\nameddright{S^{2}}{}{D\lambda_{\nu}}{\jmath_{\nu}}{P_{B}\nu}\)
has the property that the composite 
\(\nameddright{S^{2}}{a}{P_{B}\nu}{\widehat{e}}{\mathbb{C}P^{\infty}}\) 
induces an isomorphism in degree $2$ cohomology.
\end{proof}

The class $\widehat{e}\in H^2(P_B\nu)$ in Lemma \ref{hatelemma} determines a principal $S^1$-bundle over $P_B\nu$ 
\begin{equation}\label{S1hatPBnueq}
S^1\stackrel{\widehat{j}}{\longrightarrow} \widehat{P}_B\nu \stackrel{\widehat{s}}{\longrightarrow} P_B\nu 
\end{equation}
that defines the manifold $\widehat{P}_B\nu$ and the maps $\widehat{j}$ and $\widehat{s}$. Notice that ${\rm dim}(\widehat{P}_B\nu)={\rm dim}(P_B\nu)+1=l+2n+1=m+1$. We will need a technical lemma 
showing that the restriction of this circle bundle determined by pulling back with 
\(\namedright{D\lambda_{\nu}}{\jmath_{\nu}}{P_{B}\nu}\) 
is fibrewise isomorphic to $\partial V\times D^{2}$ over $B$. 

\begin{lemma}\label{hatPBnulemma}
There is a commutative diagram of geometric pushouts
\[
\begin{gathered}
\label{hatPBnupushoutdiag2}
\spreaddiagramcolumns{-1pc}\spreaddiagramrows{-1pc} 
\xymatrix{
&&&&S^1 \ar[ddlll] \ar[ddrrr]\\
&~&&&&&&~\\
&~&&&&&&~\\
S^{2n-1}\times S^1 \ar[rr]^{{\rm id}\times \iota}   \ar[dd]^{(\iota\times {\rm id})\circ \phi_x} &&  S^{2n-1} \times D^2 \ar[dd]^{} & && &
\partial V\times S^1 \ar[rr]^{{\rm id} \times \iota} \ar[dd]^{(\iota_\nu \times {\rm id})\circ \Phi} && 
\partial V \times D^2 \ar[dd]\\
&&&\ar[rr] &&&&&& \ar[dddr] \\
D^{2n}\times S^1 \ar[rr]^{}                        &&   S^{2n+1}, &&&&
V \times S^1 \ar[rr] &&
\widehat{P}_B\nu\\
&\ar[dd]&&&&&&\ar[dd]\\
& ~&&&&&&~ &&& B\\
& ~&&&&&&~\\
S^{2n-1}  \ar[rr]^{\iota_\eta}        \ar[dd]^{\iota}      &&   D\eta  \ar[dd]^{}   & &&&
\partial V \ar[rr]^{\iota_\lambda}   \ar[dd]^{\iota_\nu} &&  D\lambda_\nu \ar[dd]^{\jmath_\nu}  \\
&&&\ar[rr] &&&&&& \ar[uuur] \\
D^{2n} \ar[rr]^{}      && \mathbb{C}P^{n},    &&&&
V \ar[rr]^{\jmath_\lambda}                                         &&   P_B\nu
}
\end{gathered}
\] 
where the horizontal morphisms of pushouts are fibred over $B$, the vertical morphisms of pushouts are
 circle bundles, $\Phi$ is a fibrewise self-diffeomorphism of $\partial V\times S^{1}$, for any $x\in B$ 
 the map $\phi_{x}$ is the restriction of $\Phi$ to $S^{2n-1}\times S^{1}$, and $\iota$ and $\iota_{\nu}$ 
 are the inclusions of the boundaries. 
 \end{lemma}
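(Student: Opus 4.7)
The plan is to compute the principal circle bundle $\widehat{s}\colon \widehat{P}_B\nu \to P_B\nu$ piecewise by pulling it back along the two structural maps $\jmath_\lambda$ and $\jmath_\nu$ of the geometric pushout~(\ref{PBnudiag}) defining $P_B\nu$, identifying the two resulting restrictions over $\partial V$, and recognising the assembled total space as the desired pushout. The left-hand small cube will then drop out by restricting the right-hand large cube to the fibre of $\kappa_\nu\colon P_B\nu \to B$ over a point $x \in B$.

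First I apply Lemma~\ref{hatelemma}. Because $\jmath_\lambda^*(\widehat{e}) = 0$, the pullback $\jmath_\lambda^*(\widehat{P}_B\nu)$ is a trivial principal $S^{1}$-bundle over $V$; fix a trivialisation $\jmath_\lambda^*(\widehat{P}_B\nu) \cong V \times S^1$. Because $\jmath_\nu^*(\widehat{e}) = \pi_\lambda^*(e)$ and $e$ classifies the circle bundle $q\colon \partial V \to P\nu$ by~(\ref{ecfibrations}), one has $\jmath_\nu^*(\widehat{P}_B\nu) \cong \pi_\lambda^*(\partial V)$. Using the explicit descriptions $D\lambda_\nu = \{(x, L_x, v) : v \in L_x,\ |v|\leq 1\}$ and $\partial V = S\lambda_\nu = \{(x, L_x, w) : |w| = 1\}$, this pullback is diffeomorphic to $\partial V \times D^2$ via $((x, L_x, w), z) \mapsto ((x, L_x, z\cdot w), (x, L_x, w))$, with projection to $D\lambda_\nu$ given by $(y, z) \mapsto z \cdot y$; the preimage of $\partial V \hookrightarrow D\lambda_\nu$ is precisely $\partial V \times S^1$, projecting to $\partial V$ via the fibrewise $S^{1}$-action $(y, z) \mapsto z \cdot y$ on the sphere bundle of $\lambda_\nu$.

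Since fibre bundles descend along the cofibration pushout~(\ref{PBnudiag}) (cf.\ Lemma~\ref{3pushoutlemma}), the total space $\widehat{P}_B\nu$ is obtained by gluing $V \times S^1$ and $\partial V \times D^2$ along $\partial V \times S^1$. On the $D\lambda_\nu$ side the attaching map is manifestly ${\rm id} \times \iota$. On the $V$ side it must intertwine the two projections $\partial V \times S^1 \to \partial V$ appearing above, namely $(y, z) \mapsto y$ from the chosen trivialisation and $(y, z) \mapsto z \cdot y$ from the $D\lambda_\nu$ side, which forces the attaching to be $(\iota_\nu \times {\rm id}) \circ \Phi$ with $\Phi(y, z) = (z \cdot y, z)$. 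Because the $S^{1}$-action on $\partial V$ arises from the complex structure on $\lambda_\nu$, it respects $s_\nu\colon \partial V \to B$, so $\Phi$ is a fibrewise self-diffeomorphism of $\partial V \times S^1$ over $B$. Combined with Lemma~\ref{kappanulemma}, this yields the right-hand large cube of the statement, with horizontal arrows fibred over $B$ and vertical arrows circle bundles.

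Finally, restricting the right-hand cube to the fibre of $\kappa_\nu$ over any $x \in B$ gives $V|_x = D^{2n}$, $D\lambda_\nu|_x = D\eta$, $\partial V|_x = S^{2n-1}$, and $P_B\nu|_x = \mathbb{C}P^n$ by Lemma~\ref{kappanulemma}. Lemma~\ref{hatesphere} ensures that the restriction of $\widehat{e}$ to this $\mathbb{C}P^n$ is a generator of $H^2(\mathbb{C}P^n)$, so the restricted circle bundle is the Hopf bundle $S^{2n+1} \to \mathbb{C}P^n$, and the restricted clutching $\phi_x = \Phi|_{S^{2n-1} \times S^1}$ is the standard Heegaard twist $(y, z) \mapsto (z \cdot y, z)$ expressing $S^{2n+1}$ as $D^{2n} \times S^1 \cup_{\phi_x} S^{2n-1} \times D^2$; this is the left-hand small cube. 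The main obstacle is the bookkeeping in the second and third paragraphs: realising $\pi_\lambda^*(\partial V)$ concretely as $\partial V \times D^2$ and extracting the explicit form of $\Phi$ in a way that is manifestly fibrewise over $B$.
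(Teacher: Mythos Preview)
Your proof is correct and follows the same strategy as the paper's: pull back the circle bundle $\widehat{s}$ along $\jmath_\lambda$ and $\jmath_\nu$ using Lemma~\ref{hatelemma}, identify the pieces as $V\times S^1$ and $\partial V\times D^2$, glue over $\partial V$, and then restrict to a fibre of $\kappa_\nu$. The only difference is in how the identification $\jmath_\nu^*(\widehat{P}_B\nu)\cong \partial V\times D^2$ is obtained: the paper does it abstractly via the pullback interchange $\pi_\lambda^{-1}(S\lambda_\nu)\cong q^{-1}(D\lambda_\nu)$ together with the nullhomotopy of $e\circ q$ from~(\ref{ecfibrations}), whereas you compute directly in coordinates on $\lambda_\nu$; your route has the pleasant side effect of producing the explicit clutching $\Phi(y,z)=(z\cdot y,z)$, which the paper leaves unspecified.
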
 

\begin{proof} 
By Lemma \ref{kappanulemma}, the geometric pushout for $P_B\nu$ in~(\ref{PBnudiag}) is fibred 
over $B$ with the pushout of~$\mathbb{C}P^n$ in~(\ref{PBnudiag1}) as fibre. This 
accounts for the lower horizontal row in the diagram asserted to exist by the lemma. 

Next, compose all maps in this lower row with 
\(\namedright{P_{B}\nu}{\widehat{e}}{\mathbb{C}P^{\infty}}\) 
and take fibres. First consider the right column in the diagram asserted to exist by the lemma. 
The map $\widehat{e}$ induces the circle bundle 
$S^1\stackrel{\widehat{j}}{\longrightarrow} \widehat{P}_B\nu \stackrel{\widehat{s}}{\longrightarrow} P_B\nu$ 
in~(\ref{S1hatPBnueq}). By Lemma \ref{hatelemma}, the composite 
\(\nameddright{V}{\jmath_{\lambda}}{P_{B}\nu}{\widehat{e}}{\mathbb{C}P^{\infty}}\) 
is homotopic to the trivial map to the basepoint and the composite 
\(\nameddright{D\lambda_{\nu}}{\jmath_{\nu}}{P_{B}\nu}{\widehat{e}}{\mathbb{C}P^{\infty}}\) 
is homotopic to $e\circ\pi_{\lambda}$. Therefore the pullback of 
$\widehat{P}_B\nu \stackrel{\widehat{s}}{\longrightarrow} P_B\nu$ 
with 
\(\namedright{V}{\jmath_{\lambda}}{P_{B}\nu}\) 
is isomorphic to $V\times S^1$ while its pullback with 
\(\namedright{D\lambda_{\nu}}{\jmath_{\nu}}{P_{B}\nu}\)
is $\pi_\lambda^{-1}(S\lambda_\nu)$. 
We claim that $\pi_\lambda^{-1}(S\lambda_\nu)$ is fibrewise isomorphic to $\partial V\times D^2$, meaning there is a fibration diagram 
\begin{equation} 
  \label{Bfibrewise} 
  \diagram 
      S^{2n-1}\times D^{2}\rto\dto^{\cong} & \partial V\times D^{2}\rto\dto^{\cong} & B\ddouble \\ 
      \pi_{\eta}^{-1}(S\eta)\rto & \pi_{\lambda}^{-1}(S\lambda_{\nu})\rto & B. 
  \enddiagram 
\end{equation} 
If this were to exist then we would have established that the right column in the diagram asserted by 
the lemma is a morphism of pushouts over $\mathbb{C}P^{\infty}$, or equivalently, a circle bundle of 
pushouts. 

To establish~(\ref{Bfibrewise}), consider the pullback squares of fibre bundles
\[
\begin{gathered}
\xymatrix{
\pi_\eta^{-1}(S\eta)\cong q_0^{-1}(D\eta) \ar[r] \ar[d]  &  D\eta \ar[d]^{\pi_\eta}     & 
\pi_\lambda^{-1}(S\lambda_\nu)\cong q^{-1}(D\lambda_\nu) \ar[r] \ar[d]  &  D\lambda_\nu \ar[d]^{\pi_\lambda}    \\
S\eta \cong S^{2n-1} \ar[r]^<<<<<{q_0} & \mathbb{C}P^{n-1},                    &
S\lambda_\nu\cong\partial V \ar[r]^<<<<<<{q} & P\nu,
 }
 \end{gathered}
\]
where the right pullback is fibred over $B$ with the left pullback as fibre. This implies that $\pi_\lambda^{-1}(S\lambda_\nu)\cong q^{-1}(D\lambda_\nu)$ is a fibrewise isomorphism and restricts to $\pi_\eta^{-1}(S\eta)\cong q_0^{-1}(D\eta)$ on each fibre. 
Recall that the disk bundle projections $D\lambda_\nu\stackrel{\pi_\lambda}{\longrightarrow} P\nu$ and $D\eta\stackrel{\pi_\eta}{\longrightarrow} \mathbb{C}P^{n-1}$ are classified by 
\(\namedright{P\nu}{e}{\mathbb{C}P^{\infty}}\) 
and 
\(\namedright{\mathbb{C}P^{n-1}}{c}{\mathbb{C}P^{\infty}}\) 
respectively, and $e$ restricts to $c$ on each fibre by \eqref{ecdgrm}. Additionally, the composites $e\circ q$ and $c\circ q_0$ are null homotopic by~(\ref{ecfibrations}). Therefore, the pullback bundles $q^{-1}(D\lambda_\nu)$ and $q_0^{-1}(D\eta)$ are trivial, and there is a bundle isomorphism $q^{-1}(D\lambda_\nu)\cong\partial V\times D^2$ over $\partial V$, with restriction $q_0^{-1}(D\eta)\cong S^{2n-1}\times D^2$ over each fibre $S^{2n-1}$. Hence there is a fibrewise isomorphism $\pi_\lambda^{-1}(S\lambda_\nu)\cong\partial V\times D^2$, which restricts to $\pi_\eta^{-1}(S\eta)\cong S^{2n-1}\times D^2$. 

Finally, the upper row and left column in the diagram in the statement of the lemma 
follow by pulling back the lower row and right column.   
\end{proof}

We now turn to constructing a canonical circle bundle over the geometric blow up $\widetilde{N}=N\mathop{\#}\limits_{B}P_B\nu$. Consider the fourth pushout in~(\ref{4pos}), the geometric pushout 
defining $\widetilde{N}$. Let 
\(\widetilde{\varepsilon}\colon\namedright{N_{c}}{}{\mathbb{C}P^{\infty}}\) 
be the trivial map to the base point. Arguing just as for Lemmas~\ref{hatelemma} and~\ref{hatesphere} 
we obtain the following.

\begin{lemma} 
\label{tildeelemma} 
There is a map 
\(\namedright{\widetilde{N}}{\widetilde{e}}{\mathbb{C}P^{\infty}}\) 
satisfying a homotopy pushout diagram 
\[
\begin{gathered}
\xymatrix{ 
   \partial V \ar[r]^{\iota_\lambda}   \ar[d]^{\iota_c}              &  D\lambda_\nu \ar[d]^{r_{j}} \ar@/^/[ddr]^{e\circ\pi_\lambda} &  \\ 
   N_{c} \ar[r]^{r} \ar@/_/[drr]_{\widetilde{\varepsilon}} &  \widetilde{N} \ar@{.>}[dr]^(0.4){\widetilde{e}} & \\ 
   & & \mathbb{C}P^\infty. 
   }  
\end{gathered}
\] 
Consequently, there exists a class $\widetilde{e}\in H^2(\widetilde{N})$ such that $r_{j}^{\ast}(\widetilde{e})=\pi_\lambda^\ast(e)$ and $r^{\ast}(\widetilde{e})=0$. 
Further, there is a map 
\(\namedright{S^{2}}{}{\widetilde{N}}\) 
with the property that the composite 
\(\nameddright{S^{2}}{}{\widetilde{N}}{\widetilde{e}}{\mathbb{C}P^{\infty}}\) 
induces an isomorphism in degree $2$ cohomology. ~$\qqed$
\end{lemma}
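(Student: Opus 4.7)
The plan is to mimic verbatim the arguments of Lemmas~\ref{hatelemma} and~\ref{hatesphere}, simply replacing the geometric pushout defining $P_B\nu$ by the geometric pushout defining $\widetilde{N}$, and the trivial map $\varepsilon\colon V\to\mathbb{C}P^\infty$ by the trivial map $\widetilde{\varepsilon}\colon N_c\to\mathbb{C}P^\infty$. The only thing to check is that the two composites on the boundary $\partial V$ agree up to homotopy so that the universal property of a homotopy pushout applies.

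First I would note that the fourth pushout in~(\ref{4pos}) is a geometric pushout and hence, by Lemma~\ref{3pushoutlemma}, a homotopy pushout. To apply its semi-universal property~(\ref{hpushout-univ}) with cone maps $e\circ\pi_\lambda\colon D\lambda_\nu\to\mathbb{C}P^\infty$ and $\widetilde{\varepsilon}\colon N_c\to\mathbb{C}P^\infty$, I need a homotopy $\widetilde{\varepsilon}\circ\iota_c\simeq e\circ\pi_\lambda\circ\iota_\lambda$. The left side is null homotopic because $\widetilde{\varepsilon}$ is. For the right side, by~(\ref{qfactoreq}) we have $\pi_\lambda\circ\iota_\lambda=q$, and by~(\ref{ecfibrations}) the composite $e\circ q$ is null homotopic as it is the projection of the principal circle bundle classified by $e$. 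Hence both composites are null and the homotopy pushout furnishes the desired map $\widetilde{e}\colon\widetilde{N}\to\mathbb{C}P^\infty$ with $\widetilde{e}\circ r_j\simeq e\circ\pi_\lambda$ and $\widetilde{e}\circ r\simeq\widetilde{\varepsilon}$. Reading off degree $2$ cohomology immediately gives $r_j^{\ast}(\widetilde{e})=\pi_\lambda^{\ast}(e)$ and $r^{\ast}(\widetilde{e})=0$.

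For the second assertion, I follow Lemma~\ref{hatesphere}. By Lemma~\ref{esphere} there is a map $S^2\to P\nu$ whose composite with $e$ induces an isomorphism in $H^2$. By Lemma~\ref{pilambdaequiv}, $\pi_\lambda$ is a homotopy equivalence, so lifting through $\pi_\lambda$ produces a map $S^2\to D\lambda_\nu$ whose composite with $e\circ\pi_\lambda$ induces an isomorphism in $H^2$. Post-composing with $r_j$ yields $a\colon S^2\to\widetilde{N}$, and $\widetilde{e}\circ a=\widetilde{e}\circ r_j\circ(S^2\to D\lambda_\nu)\simeq e\circ\pi_\lambda\circ(S^2\to D\lambda_\nu)$ then induces an isomorphism in degree $2$ cohomology.

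There is no real obstacle here since the structure is identical to Lemmas~\ref{hatelemma} and~\ref{hatesphere}; the only point requiring attention is verifying the nullhomotopy $e\circ\pi_\lambda\circ\iota_\lambda\simeq\ast$, and this follows directly from the identification of $q$ as the circle bundle classified by~$e$ in~(\ref{ecfibrations}).
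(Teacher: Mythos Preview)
Your proof is correct and is exactly the approach the paper intends: the paper simply writes ``Arguing just as for Lemmas~\ref{hatelemma} and~\ref{hatesphere} we obtain the following'' and gives no further details, so your verbatim transcription of those arguments with $\jmath_\nu$ replaced by $r_j$ and $\varepsilon$ by $\widetilde{\varepsilon}$ is precisely what was expected.
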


The class $\widetilde{e}\in H^{2}(\widetilde{N})$ in Lemma \ref{tildeelemma} determines a principal $S^1$-bundle 
\begin{equation}\label{S1tildeEeq}
S^1\stackrel{\widetilde{j}}{\longrightarrow} \widetilde{E} \stackrel{\widetilde{s}}{\longrightarrow} \widetilde{N} 
\end{equation}
that defines the $(m+1)$-dimensional manifold $\widetilde{E}$ and the maps $\widetilde{j}$ and $\widetilde{s}$.
We call (\ref{S1tildeEeq}) the {\it canonical circle bundle} of the geometric blow up $\widetilde{N}$.

The following proposition is the main result of this section. Recall from 
Lemma~\ref{blowup=sumlemma} that the geometric blow up $\widetilde{N}$ can be identified with the 
fibrewise connected sum $N\conn_{B} P_{B}\nu$. 

\begin{proposition}\label{tildeEprop}
The total manifold $\widetilde{E}$ of the canonical circle bundle (\ref{S1tildeEeq}) is the effect of a fibrewise $1$-surgery on the manifold $N\times S^{1}$ determined by a fibrewise framed $1$-embedding $g: V\times S^{1}\hookrightarrow  N\times S^1$ over $B$:
\[
\widetilde{E}\cong ((N\times S^1)\backslash g(\accentset{\circ}{V}\times  S^{1}))\cup_{\partial V\times S^{1}} (\partial V\times D^{2}).
\]
Moreover, there is a geometric pushout
\begin{equation}
\begin{gathered}
\label{tildeEpushoutdiag}
\xymatrix{
\partial V\times S^1 \ar[r]^{{\rm id} \times \iota} \ar[d]^{(\iota_c\times {\rm id})\circ \phi} & 
\partial V \times D^2 \ar[d]\\
N_c\times S^1 \ar[r] &
\widetilde{E}, 
}
\end{gathered}
\end{equation} 
where $\phi$ is the restriction to $\partial V\times S^{1}$ of a self-diffeomorphism $\phi'$ 
of $V\times S^{1}$ given by $\phi'(v, t)=(\tau(t)v, t)$ for some map $\tau: S^1\longrightarrow \mathcal{G}^{+}(\nu)$. 
\end{proposition}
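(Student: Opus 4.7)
The plan is to pull back the canonical circle bundle $\widetilde{s}\colon \widetilde{E}\to \widetilde{N}$ along the two sides of the geometric pushout $\widetilde{N}= N_{c}\cup_{\partial V} D\lambda_{\nu}$ (the fourth square in~(\ref{4pos})) and assemble the result, reusing the analysis already carried out in Lemma~\ref{hatPBnulemma}. Since $\widetilde{s}$ is classified by $\widetilde{e}$ and Lemma~\ref{tildeelemma} gives $r^{\ast}(\widetilde{e})=0$ and $r_{j}^{\ast}(\widetilde{e})=\pi_{\lambda}^{\ast}(e)$, the pullback over $N_{c}$ is the trivial bundle $N_{c}\times S^{1}$, while the pullback over $D\lambda_{\nu}$ is exactly the bundle $\pi_{\lambda}^{-1}(S\lambda_{\nu})$ that already appeared in Lemma~\ref{hatPBnulemma}. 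The fibrewise triviality argument in that lemma showed $\pi_{\lambda}^{-1}(S\lambda_{\nu})\cong \partial V\times D^{2}$ as a bundle over $B$, and restricting to the boundary gives the pullback over $\partial V$, namely $\partial V\times S^{1}$.

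Now I would argue, using the fact that pulling back a fibre bundle along the maps of a geometric pushout produces a geometric pushout of total spaces, that $\widetilde{E}$ fits into a geometric pushout of the form
\[
\xymatrix{
\partial V\times S^{1}\ar[r]^-{\mathrm{id}\times\iota}\ar[d]_-{(\iota_{c}\times\mathrm{id})\circ\phi} & \partial V\times D^{2}\ar[d] \\
N_{c}\times S^{1}\ar[r] & \widetilde{E},
}
\]
where $\phi$ is a fibrewise self-diffeomorphism of $\partial V\times S^{1}$ over $B$ that records the discrepancy between the two trivializations of the pullback over the common boundary $\partial V$. Crucially, $\phi$ is precisely the restriction to $\partial V\times S^{1}$ of the fibrewise self-diffeomorphism $\Phi$ produced for $\widehat{P}_{B}\nu$ in Lemma~\ref{hatPBnulemma}: the pullback of the circle bundle over $D\lambda_{\nu}$ only depends on the map $\widehat{e}\circ \jmath_{\nu}\simeq \widetilde{e}\circ r_{j}\simeq e\circ \pi_{\lambda}$, which is the same in both settings. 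Since the structural group of $\nu$ reduces to an orientation-preserving linear one, $\Phi$ can be taken of the form $\Phi(v,t)=(\tau(t)v,t)$ for a map $\tau\colon S^{1}\to \mathcal{G}^{+}(\nu)$, exactly as in part~(1) of Lemma~\ref{fib-frame-lemma}.

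It remains to recognize this presentation as a fibrewise $1$-surgery on $N\times S^{1}$. Since $N=N_{c}\cup_{\partial V} V$ is a geometric pushout, crossing with $S^{1}$ gives
\[
N\times S^{1}=(N_{c}\times S^{1})\cup_{\partial V\times S^{1}}(V\times S^{1}).
\]
Let $\phi'\colon V\times S^{1}\to V\times S^{1}$ be the bundle self-diffeomorphism $\phi'(v,t)=(\tau(t)v,t)$ extending $\phi$, and define $g\colon V\times S^{1}\hookrightarrow N\times S^{1}$ as the composite $(i\times \mathrm{id})\circ \phi'$. This is a fibrewise framed $1$-embedding over $B$ in the sense of Section~\ref{sec: semi}, with core family $B\times S^{1}\hookrightarrow N\times S^{1}$. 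Removing $g(\accentset{\circ}{V}\times S^{1})$ from $N\times S^{1}$ leaves $N_{c}\times S^{1}$, and the residual boundary gluing is precisely the map $(\iota_{c}\times \mathrm{id})\circ\phi$ in the pushout above. Attaching $\partial V\times D^{2}$ along this gluing therefore realizes $\widetilde{E}$ as the effect of the fibrewise $1$-surgery along $g$, giving the stated formula and pushout.

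The step I expect to require the most care is the identification of $\phi$ as the fibrewise self-diffeomorphism coming from $\tau\colon S^{1}\to \mathcal{G}^{+}(\nu)$: one must check that the two trivializations of the pullback circle bundle on $\partial V$—the one inherited from $N_{c}\times S^{1}$ (where the bundle is globally trivial) and the one inherited from $\partial V\times D^{2}\to B$ (where triviality comes from the pullback argument in Lemma~\ref{hatPBnulemma})—differ by a bundle automorphism that is fibrewise over $B$ and orientation-preserving on each fibre. This is essentially the content of Lemma~\ref{hatPBnulemma} applied in the blow-up setting, but needs to be stated for $\widetilde{E}$ rather than $\widehat{P}_{B}\nu$; once this is in hand, part~(1) of Lemma~\ref{fib-frame-lemma} immediately supplies the factorization $\phi(v,t)=(\tau(t)v,t)$.
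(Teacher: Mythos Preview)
Your proposal is correct and follows essentially the same route as the paper: pull back the circle bundle along the geometric pushout $\widetilde{N}=N_{c}\cup_{\partial V}D\lambda_{\nu}$, use Lemma~\ref{tildeelemma} to see the bundle is trivial over $N_{c}$, use Lemma~\ref{hatPBnulemma} to identify the piece over $D\lambda_{\nu}$ with $\partial V\times D^{2}$, and then invoke Lemma~\ref{fib-frame-lemma}(1) to pin down the gluing in the form $(\tau(t)v,t)$ for some $\tau\colon S^{1}\to\mathcal{G}^{+}(\nu)$.

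One small organizational point: the paper's proof phrases the last step slightly differently, and this ordering makes the appeal to Lemma~\ref{fib-frame-lemma} more transparent. Rather than first writing down the pushout with an unidentified $\phi$ and then arguing that $\phi$ has the required form, the paper first observes that the assembled description of $\widetilde{E}$ is by definition the effect of a fibrewise $1$-surgery for \emph{some} fibrewise framing $g\colon V\times S^{1}\hookrightarrow N\times S^{1}$, and only then compares $g$ to the standard framing $g_{0}=i\times\mathrm{id}$ via Lemma~\ref{fib-frame-lemma}(1) to extract $\phi'(v,t)=(\tau(t)v,t)$, finally using part~(2) of that lemma to conclude the surgery effects along $g$ and $(i\times\mathrm{id})\circ\phi'$ are diffeomorphic. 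Your version implicitly runs the same argument, but Lemma~\ref{fib-frame-lemma}(1) is stated for pairs of framings of a tubular neighborhood rather than for transition functions between two trivializations of a circle bundle, so to invoke it cleanly you should first say that the gluing data determines a fibrewise framed $1$-embedding $g$ (this is the ``careful step'' you flag) and then compare with $g_{0}$.
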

\begin{proof}
By Lemmas \ref{blowup=sumlemma} and \ref{tildeelemma}, the circle bundle (\ref{S1tildeEeq}) over 
\[
\widetilde{N}\cong N\mathop{\#}\limits_{B}P_B\nu \cong  (N \backslash i(\accentset{\circ}{V}))\cup_{\partial V}(\partial V\times [0,1])\cup_{\partial V}(P_B\nu \backslash \jmath_{\lambda}(\accentset{\circ}{V}))
\]
is determined by the class $\widetilde{e}\in H^2(\widetilde{N})$, whose restriction to $N_c=N \backslash i(\accentset{\circ}{V})$ is trivial and whose restriction to $P_B\nu \backslash \jmath_{\lambda}(\accentset{\circ}{V})=D\lambda_{\nu}$ is 
determined by $\pi_{\lambda}^{\ast}(e)\in H^2(D\lambda_{\nu})$. By Lemma~\ref{hatelemma}, 
$\pi_{\lambda}^{\ast}(e)=\jmath_{\nu}^{\ast}(\widehat{e})$, so the circle bundle determined by 
$\pi_{\lambda}^{\ast}(e)$ is given by restricting the circle bundle 
\(\namedright{\widehat{P}_{B}\nu}{\widehat{s}}{P_{B}\nu}\) in (\ref{S1hatPBnueq}) 
determined by $\widehat{e}$ with the map 
\(\namedright{D\lambda_{\nu}}{\jmath_{\nu}}{P_{B}\nu}\). 
By the right column in the diagram in the statement of Lemma~\ref{hatPBnulemma}, this circle 
bundle is fibrewise isomorphic to $\partial V\times D^2$ over $B$. 

This implies that, up to isomorphism, the circle bundle (\ref{S1tildeEeq}) can be obtained by first considering a trivial bundle $N\times S^1$ over $N$ and the canonical circle bundle (\ref{S1hatPBnueq}) over $P_B\nu$, then removing 
two copies of $V\times S^1$ from $N\times S^1$ and $P_B\nu$ respectively, and finally gluing them together along the boundary. Equivalently, to construct the total manifold $\widetilde{E}$ of the circle bundle (\ref{S1tildeEeq}) up to isomorphism, we can first remove a suitable fibrewise framed $1$-embedding $g: V\times S^{1}\hookrightarrow  N\times S^1$ determined by the two copies of $V\times S^1$, and then glue it with $\partial V\times D^{2}$ through $g$ along $\partial V\times S^1$. By Definition \ref{semi-surdef}, this means that $\widetilde{E}$ is a fibrewise $1$-surgery on $N\times S^1$, that is, 
\begin{equation}\label{Etildeiso} 
\widetilde{E}\cong ((N\times S^1)\backslash g(\accentset{\circ}{V}\times  S^{1}))\cup_{\partial V\times S^{1}} (\partial V\times D^{2}).
\end{equation} 

Moreover, notice there is a standard fibrewise framed $1$-embedding $g_0=i\times {\rm id}: V\times S^{1}\hookrightarrow  N\times S^1$, where $i$ is the inclusion. By Lemma \ref{fib-frame-lemma} (1), up to an isotopy relative to the core manifold embedding $B\times S^{1}\hookrightarrow N\times S^1$, the two framed embeddings~$g$ and $g_0$ differ by a fibrewise diffeomorphism
\[
\phi': V\times S^{1} \longrightarrow V\times S^{1}
\]
over $B\times S^{1}$ with the property that $\phi'(v, t)= (\tau(t) v, t)$ for some map $\tau: S^1 \rightarrow \mathcal{G}^{+}(\nu)$. 
In particular, $g$ is isotopic to $(i\times {\rm id})\circ \phi'$ relative to $B\times S^{1}\hookrightarrow N\times S^1$. Hence, by Lemma \ref{fib-frame-lemma} (2) the effects of fibrewise $1$-surgeries along $g$ and $(i\times {\rm id})\circ \phi'$ are diffeomorphic. Combining with~(\ref{Etildeiso}), this implies that 
\[
\begin{split}
\widetilde{E}
&\cong ((N\times S^1)\backslash g(\accentset{\circ}{V}\times  S^{1}))\cup_{\partial V\times S^1} (\partial V\times D^{2})\\
&\cong ((N\times S^1)\backslash ((i\times {\rm id})\circ \phi')(\accentset{\circ}{V}\times  S^{1}))\cup_{\partial V\times S^{1}} (\partial V\times D^{2}).
\end{split}
\] 
Since the restrictions of $\phi'$ and $i\times {\rm id}$ on the boundaries are $\phi$ and $\iota_c\times {\rm id}$ respectively, the last diffeomorphism is equivalent to the asserted geometric pushout in~(\ref{tildeEpushoutdiag}).
\end{proof}

\begin{example}[Circle bundle over a blow up at a point]\label{S1blowuppointex}
Suppose $B=\ast$ is a point and continue Example \ref{blowuppointex}. In this case $\widetilde{e}\in H^2(N\#\mathbb{C}P^n)$ corresponds to a generator $c\in H^2(\mathbb{C}P^n)$ for the right side 
of the connected sum. Diagram (\ref{tildeEpushoutdiag}) of Proposition \ref{tildeEprop} becomes
\[
\begin{gathered}
\label{sumextildeEpushoutdiag}
\xymatrix{
S^{2n-1}\times S^1 \ar[r]^{{\rm id} \times \iota} \ar[d]^{(i\times {\rm id})\circ \phi} & 
S^{2n-1} \times D^2 \ar[d]\\
N_0\times S^1 \ar[r] &
\widetilde{E}.
}
\end{gathered}
\] 
Thus the manifold $\widetilde{E}$ is obtained from $N\times S^1$ by a surgery along $\ast\times S^{1}\cong S^1$ with framing determined by $\phi$. This was studied by the authors~\cite{HT2} from the perspective of homotopy theory and by Duan~\cite{D1} from the perspective of geometric topology. 
\end{example}


\section{The local homotopy theory of a blow up}
\label{sec: homotopy} 
In this section we extend the method in \cite[Section 2]{HT2} to study the local homotopy 
theory of blow ups and prove Theorems \ref{loopblowupthmintro}, \ref{loopblowupthmintrorational} 
and~\ref{dichotomythmintro}. 

We start with an initial decomposition. By~(\ref{S1tildeEeq}), if $\widetilde{N}$ is the geometric blow up of $N$ along an $l$-dimensional submanifold $B$ with normal bundle $\nu$, then there is a canonical $S^1$-bundle $S^1\stackrel{\widetilde{j}}{\longrightarrow} \widetilde{E} \stackrel{\widetilde{s}}{\longrightarrow} \widetilde{N}$. 
\begin{lemma}\label{canbundletriv}
There is a homotopy equivalence
\[
\Omega \widetilde{N}\simeq S^1\times \Omega \widetilde{E}.
\]
\end{lemma}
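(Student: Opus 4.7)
The plan is to exploit the fact, recorded in Lemma~\ref{tildeelemma}, that there exists a map $a\colon S^{2}\to\widetilde{N}$ such that the composite $\widetilde{e}\circ a\colon S^{2}\to\mathbb{C}P^{\infty}$ induces an isomorphism in degree~$2$ cohomology. The classifying map $\widetilde{e}$ of the circle bundle~(\ref{S1tildeEeq}) gives a principal homotopy fibration
\[
\nameddright{\widetilde{E}}{\widetilde{s}}{\widetilde{N}}{\widetilde{e}}{\mathbb{C}P^{\infty}},
\]
and looping this produces the homotopy fibration
\[
\nameddright{\Omega\widetilde{E}}{\Omega\widetilde{s}}{\Omega\widetilde{N}}{\Omega\widetilde{e}}{\Omega\mathbb{C}P^{\infty}\simeq S^{1}}.
\]
The strategy is to build a homotopy section of $\Omega\widetilde{e}$ from~$a$ and then multiply it into $\Omega\widetilde{s}$ using the $H$-space structure on $\Omega\widetilde{N}$.

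First I would adjoint the map $a$ to obtain $\widetilde{a}\colon S^{1}\to\Omega\widetilde{N}$ and observe that the composite $\Omega\widetilde{e}\circ\widetilde{a}\colon S^{1}\to\Omega\mathbb{C}P^{\infty}\simeq S^{1}$ is the adjoint of $\widetilde{e}\circ a$. Since $\widetilde{e}\circ a$ induces an isomorphism on $H^{2}$ and both $S^{2}$ and $\mathbb{C}P^{\infty}$ are simply connected, it induces an isomorphism on $\pi_{2}$; therefore its adjoint induces an isomorphism on $\pi_{1}$. As this adjoint is a self-map of $S^{1}$, Whitehead's theorem makes it a homotopy equivalence. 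Rescaling by its inverse if necessary, we may assume $\Omega\widetilde{e}\circ\widetilde{a}\simeq\mathrm{id}_{S^{1}}$, so $\widetilde{a}$ is a section of $\Omega\widetilde{e}$ up to homotopy.

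Next, define
\[
\mu\colon S^{1}\times\Omega\widetilde{E}\longrightarrow\Omega\widetilde{N},\qquad \mu(s,\omega)=\widetilde{a}(s)\cdot(\Omega\widetilde{s})(\omega),
\]
where the dot denotes the loop multiplication in $\Omega\widetilde{N}$. The composite $\Omega\widetilde{e}\circ\mu$ equals $\Omega\widetilde{e}\circ\widetilde{a}$ on the first factor and is trivial on the second (since $\widetilde{e}\circ\widetilde{s}$ is null-homotopic), so $\Omega\widetilde{e}\circ\mu$ is homotopic to the projection $\mathrm{pr}_{1}$. Restricted to $\{\ast\}\times\Omega\widetilde{E}$, the map~$\mu$ agrees with $\Omega\widetilde{s}$. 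The induced map of long exact sequences of homotopy groups for the fibration $S^{1}\times\Omega\widetilde{E}\to S^{1}$ and the looped fibration $\Omega\widetilde{E}\to\Omega\widetilde{N}\to S^{1}$ then satisfies the hypotheses of the five lemma, so $\mu$ is a weak equivalence between spaces having the homotopy types of $CW$-complexes, hence a homotopy equivalence.

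The only real point of care is step three: one must verify that the section exists as a genuine map (not just a $\pi_{\ast}$-section) and that the resulting splitting is compatible with the fibration. The argument above resolves both: the adjoint construction provides the map on the nose, and the $H$-space multiplication on $\Omega\widetilde{N}$ supplies the diagonal combination needed to splice the section with $\Omega\widetilde{s}$. I do not expect any subtlety beyond this; the result is the standard splitting principle for a principal circle bundle whose classifying class is spherical.
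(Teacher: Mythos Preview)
Your proof is correct and follows essentially the same route as the paper: both use the map $S^{2}\to\widetilde{N}$ from Lemma~\ref{tildeelemma} to produce a homotopy section of $\Omega\widetilde{e}\colon\Omega\widetilde{N}\to S^{1}$, then split the looped fibration using the loop multiplication. The paper phrases the section argument via cohomology and Hurewicz while you use $\pi_{2}$ directly, and you spell out the splitting map $\mu$ and the five-lemma step where the paper leaves this implicit, but the substance is identical.
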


\begin{proof}
By definition, the canonical $S^1$-bundle 
$S^1\stackrel{\widetilde{j}}{\longrightarrow} \widetilde{E} \stackrel{\widetilde{s}}{\longrightarrow} \widetilde{N}$ 
is induced by the map 
\(\namedright{\widetilde{N}}{\widetilde{e}}{\mathbb{C}P^{\infty}}\).  
By Lemma~\ref{tildeelemma}, there is a map 
\(\namedright{S^{2}}{}{\widetilde{N}}\) 
with the property that the composite 
\(\nameddright{S^{2}}{}{\widetilde{N}}{\widetilde{e}}{\mathbb{C}P^{\infty}}\) 
induces an isomorphism in degree $2$ cohomology. Therefore, after looping, if 
\(\namedright{S^{1}}{E}{\Omega S^{2}}\) 
is the suspension (equivalent to the inclusion of the bottom cell), then the composite 
\(\namedddright{S^{1}}{E}{\Omega S^{2}}{}{\Omega\widetilde{N}}{\Omega\widetilde{e}}{S^{1}}\) 
induces an isomorphism in cohomology. The composite therefore induces an isomorphism 
in homology, implying by the Hurewicz isomorphism that it is homotopic to the identity map up to sign. 
Hence the homotopy fibration 
\(\nameddright{\Omega\widetilde{E}}{\Omega\widetilde{s}}{\Omega\widetilde{N}}{\Omega\widetilde{e}}{S^{1}}\) 
induced by~(\ref{S1tildeEeq}) splits to give the homotopy equivalence asserted by the lemma. 
\end{proof}

By Lemma \ref{canbundletriv}, the homotopy theory of $\widetilde{N}$ can be studied via  
that of $\widetilde{E}$. Doing so will involve localization in order to establish 
conditions that allow for the inclusion 
\(N_{c}\hookrightarrow\widetilde{E}\)  
to have a left homotopy inverse. This will be a consequence of Proposition~\ref{hextlemma}, 
which requires several ingredients. 

First, we need to ensure that localization makes sense. This will be the case if all 
the manifolds involved: $N$, $B$, $N_c$, $\partial V$ and $\widetilde{N}$, are all connected 
nilpotent spaces. On the one hand, this can be made a blanket assumption. 
On the other hand, a simple criterion is given by the next lemma, which assumes only that 
the input spaces $N$ and $B$ are simply-connected and $n\geq 2$. Recall that any 
simply-connected spaces is nilpotent.

\begin{lemma}\label{nillemma}
If $N$ and $B$ are simply connected and $n\geq 2$, then the manifolds $N$, $B$, $\partial V$, $N_c$ and~$\widetilde{N}$ are all simply-connected. 
\end{lemma}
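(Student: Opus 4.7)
The proof is essentially a sequence of applications of the long exact sequence of a fibration and van Kampen's theorem, using the codimension hypothesis $n\geq 2$ at two key points. I would proceed through the five spaces in the order $N$, $B$, $\partial V$, $N_c$, $\widetilde{N}$, each time reducing to the already-established cases.

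First, $N$ and $B$ are simply connected by hypothesis. For $\partial V$, I would use the sphere bundle
\[
\namedddright{S^{2n-1}}{i_\nu}{\partial V}{s_\nu}{B}
\]
from~\eqref{Snudiag}. Since $n\geq 2$, the fibre $S^{2n-1}$ satisfies $\pi_{0}=\pi_{1}=0$, so the relevant piece of the long exact sequence in homotopy forces $\pi_{1}(\partial V)\cong\pi_{1}(B)=0$, and connectedness of $\partial V$ follows from that of the fibre and base.

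Next, for $N_c$, I would apply van Kampen to the decomposition $N=N_c\cup V$, where one enlarges $N_c$ and $V$ slightly to open sets whose intersection deformation retracts onto $\partial V$. Since $V\simeq B$ is simply connected, $\partial V$ is simply connected by the previous step, and $N_c$ is path-connected (removing a codimension $2n\geq 4$ submanifold from a connected manifold leaves it path-connected), van Kampen gives $\pi_{1}(N)\cong \pi_{1}(N_c)\ast\pi_{1}(V)\cong \pi_{1}(N_c)$, so $\pi_{1}(N_c)=\pi_{1}(N)=0$.

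Finally, for $\widetilde{N}$, I would first observe that $P\nu$ is simply connected: the projective bundle
\[
\namedddright{\mathbb{C}P^{n-1}}{i_p}{P\nu}{p_\nu}{B}
\]
from~\eqref{SPnudiag} has simply connected fibre (since $n\geq 2$, so $\mathbb{C}P^{n-1}$ is either $S^{2}$ or a higher $\mathbb{C}P^{k}$, both simply connected) and simply connected base, so the fibration long exact sequence gives $\pi_{1}(P\nu)=0$. By Lemma~\ref{pilambdaequiv}, $D\lambda_\nu\simeq P\nu$ is also simply connected. Now apply van Kampen to the geometric pushout
\[
\widetilde{N}=N_c\cup_{\partial V}D\lambda_\nu
\]
from~\eqref{blowuppushoutdiag2}: the pieces $N_c$ and $D\lambda_\nu$ are simply connected, and their intersection (a collar of $\partial V$) is simply connected and path-connected, yielding $\pi_{1}(\widetilde{N})=0$. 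The only subtlety is the routine one of enlarging the closed pieces to open neighborhoods with the correct homotopy type before invoking van Kampen; I expect no serious obstacle beyond this bookkeeping.
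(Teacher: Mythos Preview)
Your proof is correct and follows essentially the same route as the paper: the long exact sequence of the sphere bundle for $\partial V$, van Kampen on $N=V\cup_{\partial V}N_c$ for $N_c$, and van Kampen on the pushout decomposition of $\widetilde{N}$ (after observing $P\nu\simeq D\lambda_\nu$ is simply connected). The only cosmetic difference is that the paper phrases the last step as $\pi_1(\widetilde{N})\cong\pi_1(N)$ via a cited result, whereas you argue it directly; both are fine.
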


\begin{proof}
By hypothesis, $N$ and $B$ are simply-connected. The long exact sequence of homotopy groups induced by the sphere bundle $S^{2n-1}\stackrel{}{\longrightarrow} \partial V\stackrel{}{\longrightarrow} B$ implies that $\partial V$ is simply connected. 
As $V\simeq B$ and $N$ are simply connected, the Van-Kampen theorem can be applied to $N\cong V\cup_{\partial V} N_c$ to show that $N_c$ is simply connected. A similar argument for the decomposition $\widetilde{N}\cong P\nu\cup_{\partial V} N_c$ implies that $\pi_1 (\widetilde{N})\cong \pi_1(N)=0$ \cite[Theorem 8.22]{FOT}, and thus $\widetilde{N}$ is a simply connected.
\end{proof} 

The localization needed will be determined by the image of the classical 
$J$-homomorphism, which is introduced next.
Let $\Omega^{2n} X$ be the $2n$-fold iterated loop space of a space $X$. Consider the composite
\[
SO(2n)\stackrel{\chi}{\longrightarrow} {\rm Map}(S^{2n-1}, S^{2n-1})\stackrel{E}{\longrightarrow} {\rm Map}^\ast(S^{2n}, S^{2n})=\Omega^{2n} S^{2n},
\]
where $\chi$ is induced by the standard action of $SO(2n)$ on $S^{2n-1}$ and $E$ is the suspension that maps the free mapping space ${\rm Map}(S^{2n-1}, S^{2n-1})$ to the based mapping space ${\rm Map}^\ast(S^{2n}, S^{2n})$. 
The $J$-homomorphism is defined as the morphism induced by $E\circ \chi$ on homotopy groups
\[
J: \pi_{k-1} (SO(2n))\rightarrow \pi_{k-1}(\Omega^{2n} S^{2n})\cong \pi_{2n+k-1}(S^{2n}).
\]

In the stable range, that is, when $2n\geq k+1$, famous work of Adams \cite{A} and Quillen \cite{Q} shows that the image of $J$ is
\begin{equation}\label{imJeq}
{\rm Im}\, J\cong \left\{\begin{array}{cc}
0 & k\equiv 3, 5, 6, 7 \ {\rm mod} \ 8, \\
\mathbb{Z}/2 & k\equiv 1, 2\ {\rm mod} \ 8, k\neq 1, \\
\mathbb{Z}/d_s& k=4s,
\end{array}\right.
\end{equation}
where $d_s$ is the denominator of $B_s/4s$ and $B_s$ is the $s$-th Bernoulli number defined by 
\[\frac{z}{e^z-1}=1-\frac{1}{2}z-\sum\limits_{s\geq 1} (-1)^sB_s \frac{z^{2s}}{(2s)!}.\] 
For each $k\geq 2$, let $\mathcal{P}_{k}$ be the set of prime numbers such that
\[\label{pkdefeq}
\mathcal{P}_{k}= \left\{\begin{array}{cc}
\emptyset & k\equiv 3, 5, 6, 7 \ {\rm mod} \ 8, \\
\{2\} & k\equiv 1, 2\ {\rm mod} \ 8, k\neq 1, \\
\{p~|~p~{\rm divides}~d_s\}& k=4s,
\end{array}\right.
\]
and for each $l\geq k$, let $\mathcal{Q}_{k,l}$ be the set of prime numbers defined by
\[\label{qmkdefeq}
\mathcal{Q}_{k,l}=\{2\}\cup \mathcal{P}_{k}\cup \cdots \cup \mathcal{P}_{l}.
\] 

\begin{lemma}\label{Bslemma}
For each $l\geq k$,
\[\label{qmkdefeq2}
\mathcal{Q}_{k,l}=\{ {\rm prime}~p~|~(p-1)~{\rm divides}~2s,~k\leq 4s \leq l\}.
\] 
\end{lemma}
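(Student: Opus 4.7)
The plan is to reduce the identification to the classical theorem of von Staudt--Clausen together with Kummer's integrality for $B_m/m$, plus a brief case analysis of $\mathcal{P}_j$. First, comparing the paper's generating function with the standard $z/(e^z-1)=\sum_{n\geq 0} B_n z^n/n!$, the $B_s$ of the paper coincides with the classical $B_{2s}$ up to sign, so the two share the same denominator. Von Staudt--Clausen then gives
\[
B_s+\sum_{(p-1)\mid 2s}\tfrac{1}{p}\in\mathbb{Z},
\]
so in lowest terms the denominator of $B_s$ equals $\prod_{(p-1)\mid 2s}p$. Equivalently, $v_p(B_s)=-1$ when $(p-1)\mid 2s$ and $v_p(B_s)\geq 0$ otherwise.

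Next I would compute $d_s=\mathrm{denom}(B_s/4s)$ one prime at a time. For an odd prime $p$: if $(p-1)\mid 2s$ then $v_p(B_s/4s)=-1-v_p(4s)<0$, so $p\mid d_s$. Conversely, if $(p-1)\nmid 2s$, Kummer's theorem asserts that $B_s/(2s)$ is $p$-integral, and since $p$ is odd one has $v_p(4s)=v_p(2s)$, giving $v_p(B_s/4s)\geq 0$ and hence $p\nmid d_s$. For $p=2$, $(2-1)\mid 2s$ trivially, so $2\mid\mathrm{denom}(B_s)$ and thus $2\mid d_s$. These observations combine to yield the key equivalence
\[
p\mid d_s\iff (p-1)\mid 2s,
\]
valid for every prime $p$.

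Finally I would match $\mathcal{P}_j$ to the asserted set using this equivalence. When $j=4s$, the display reads $\mathcal{P}_j=\{p:(p-1)\mid 2s\}$. When $j\equiv 3,5,6,7\pmod 8$, $\mathcal{P}_j=\emptyset$ and contributes nothing; when $j\equiv 1,2\pmod 8$ with $j\neq 1$, $\mathcal{P}_j=\{2\}$, which is already absorbed in the constant $\{2\}$ appearing in the definition $\mathcal{Q}_{k,l}=\{2\}\cup\bigcup_{j=k}^{l}\mathcal{P}_j$. Consequently an odd prime $p$ belongs to $\mathcal{Q}_{k,l}$ iff some index $j=4s\in[k,l]$ contributes it, iff there exists $s$ with $k\leq 4s\leq l$ and $(p-1)\mid 2s$; the prime $p=2$ is subsumed under the trivial divisibility $(2-1)\mid 2s$.

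The main obstacle is citing Kummer's integrality of $B_s/(2s)$ correctly, because without it one cannot rule out spurious odd-prime factors entering $d_s$ from the denominator $4s$. Once that classical input is in place, the rest reduces to bookkeeping against the definition of $\mathcal{P}_k$.
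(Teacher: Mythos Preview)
Your argument is correct and follows essentially the same route as the paper's proof. The paper simply cites \cite[Theorems B.3 and B.4]{MS} for the two number-theoretic inputs you spell out (von Staudt--Clausen for the denominator of $B_s$, and the $p$-integrality of $B_s/4s$ when $(p-1)\nmid 2s$), then handles $p=2$ separately exactly as you do; your explicit valuation computation is just an unpacking of those citations.
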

\begin{proof} 
Let $p$ be an odd prime. By \cite[Theorem B.4]{MS}, $p$ divides the denominator $d_{s}$ of $B_s/4s$ 
if and only if it divides the denominator of $B_s$, while by \cite[Theorem B.3]{MS} the latter holds 
if and only if $p-1$ divides $2s$. Thus 
$\{{\rm odd}~p\mid p~{\rm divides}~d_{s}\}=\{{\rm odd}~p\mid (p-1)~{\rm divides}~2s\}$. Therefore 
$\mathcal{Q}_{k,l}\backslash\{2\}=\{ {\rm odd\ prime}~p~|~(p-1)~{\rm divides}~2s,~k\leq 4s \leq l\}$. Finally, 
by definition $2\in\mathcal{Q}_{k.l}$ while $2-1$ divides $2s$, so 
$\mathcal{Q}_{k,l}=\{ {\rm prime}~p~|~(p-1)~{\rm divides}~2s,~k\leq 4s \leq l\}$. 
\end{proof}

\noindent 
\textbf{A key proposition}.
Two further ingredients are needed to prove Proposition \ref{hextlemma}, which is the key to constructing a left homotopy inverse of the inclusion $N_c\stackrel{}{\hookrightarrow}\widetilde{E}$. The first is an auxilliary lemma. 

\begin{lemma}\label{QimJ=0lemma}
Let $Y$ be a $(k-1)$-connected $l$-dimensional $CW$-complex such that $l\leq 2n-4$ and $k\geq 1$. Let~$p$ be a prime such that $p>\frac{1}{2}(l-k)+1$, $p \not\in \mathcal{Q}_{k+2,l+2}$ and $H_\ast(Y;\mathbb{Z})$ is $p$-torsion free. Then the composition
\[
\Sigma Y\stackrel{f}{\longrightarrow} SO(2n) \stackrel{\chi}{\longrightarrow} {\rm Map}(S^{2n-1}, S^{2n-1})
\]
is $p$-locally null homotopic for any $f$. 
\end{lemma}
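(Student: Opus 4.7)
The proof plan reduces in three steps to the Adams--Quillen calculation of the image of the stable $J$-homomorphism.

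First, I split $\Sigma Y$ as a $p$-local wedge of spheres. The suspension $\Sigma Y$ is $k$-connected, $(l+1)$-dimensional, and has $p$-torsion free integral homology concentrated in degrees $[k+1,l+1]$. The inequality $p>\tfrac{1}{2}(l-k)+1$ rewrites as $(l+1)-(k+1)\leq 2p-3$, which is the standard metastable bound under which a simply connected $CW$-complex with torsion-free homology becomes $p$-locally a wedge of spheres. (Concretely, one builds the equivalence by inducting on the cellular filtration of $\Sigma Y$ and using Hilton--Milnor to check that the attaching maps live in $p$-torsion free homotopy groups of wedges of spheres in this range.) Thus there is a $p$-local homotopy equivalence $\Sigma Y\simeq_{(p)}\bigvee_{\alpha}S^{m_{\alpha}}$ with $k+1\leq m_{\alpha}\leq l+1$, and it suffices to prove that for every such $m$ and every $g\colon S^{m}\to SO(2n)$, the composite $\chi\circ g$ is $p$-locally null.

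Second, I use the evaluation fibration $\Omega^{2n-1}S^{2n-1}\to \mathrm{Map}(S^{2n-1},S^{2n-1})\to S^{2n-1}$ together with Freudenthal's theorem to detect $\chi\circ g$ via the suspension $E$. Since $m\leq l+1\leq 2n-3$, both $\pi_{m}(S^{2n-1})$ and $\pi_{m+1}(S^{2n-1})$ vanish, so the long exact sequence of the fibration yields
\[
\pi_{m}(\mathrm{Map}(S^{2n-1},S^{2n-1}))\cong \pi_{m+2n-1}(S^{2n-1}).
\]
Moreover $m+2n-1\leq 4n-4<4n-3$, so Freudenthal gives that suspension induces an isomorphism $\pi_{m+2n-1}(S^{2n-1})\xrightarrow{\cong}\pi_{m+2n}(S^{2n})\cong\pi_{m}(\Omega^{2n}S^{2n})$, which coincides with $E_{*}$. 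Hence $\chi\circ g\simeq 0$ if and only if $E\circ \chi\circ g\simeq 0$, and the latter is precisely the image of $g$ under the $J$-homomorphism $J\colon \pi_{m}(SO(2n))\to \pi_{m+2n}(S^{2n})$.

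Third, because $m\leq 2n-3$ lies in Bott's stable range, $\pi_{m}(SO(2n))\cong\pi_{m}(SO)$ and the image of $J$ equals the image of the stable $J$-homomorphism, given by the Adams--Quillen formula~(\ref{imJeq}). For odd $p$, the only possible $p$-torsion in $\mathrm{Im}\,J$ on $\pi_{m}(SO)$ occurs when $m+1=4s$ and $p\mid d_{s}$, i.e.\ $p\in\mathcal{P}_{m+1}$. Since $m+1\in[k+2,l+2]$, we have $\mathcal{P}_{m+1}\subseteq\mathcal{Q}_{k+2,l+2}$; the hypothesis $p\notin\mathcal{Q}_{k+2,l+2}$ therefore forces $p$ to be odd and to avoid every $\mathcal{P}_{m+1}$ in the relevant range, so $J(g)$ vanishes $p$-locally for every such $g$, completing the proof. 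The main obstacle is the first step: invoking (or verifying) a $p$-local wedge-of-spheres decomposition for $\Sigma Y$ whose range of validity matches the hypothesis $p>\tfrac{1}{2}(l-k)+1$ exactly; the remaining two steps are essentially bookkeeping with Freudenthal and the classical computation of $\mathrm{Im}\,J$.
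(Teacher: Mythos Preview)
Your proof is correct and follows essentially the same three-step strategy as the paper: a $p$-local splitting of $\Sigma Y$ into a wedge of spheres (the paper cites \cite[Lemma~5.1]{HT1} for this), the evaluation fibration plus Freudenthal to reduce to the $J$-homomorphism, and the Adams--Quillen computation of $\mathrm{Im}\,J$ together with $p\notin\mathcal{Q}_{k+2,l+2}$. The only cosmetic difference is that the paper first shows $E\circ\chi\circ f$ is null and then deduces $\chi\circ f$ is null, whereas you set up the isomorphism $E_{*}$ first and then apply Adams--Quillen; the content is identical.
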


\begin{proof}
Localize spaces and maps at $p$. By assumption, the cells of $\Sigma Y$ are concentrated in dimensions $k+1$ to $l+1$. 
By \cite[Lemma~5.1]{HT1}, since $p>\frac{1}{2}(l-k)+1$ and $H_\ast(Y;\mathbb{Z})$ is $p$-torsion free, $\Sigma Y$ is homotopy equivalent to a wedge of spheres. Therefore the composite 
\[
\Sigma Y\stackrel{f}{\longrightarrow} SO(2n) \stackrel{\chi}{\longrightarrow} {\rm Map}(S^{2n-1}, S^{2n-1})\stackrel{E}{\longrightarrow} \Omega^{2n}S^{2n}
\]
is determined by its restriction to each sphere summand of $\Sigma Y$. Each such restriction 
factors through the $J$-homomorphism $E\circ\chi$, and so is null homotopic since the assumption 
that $p\notin\mathcal{Q}_{k+2,l+2}$ implies that the image of the $J$-homorphism is trivial. Hence 
$E\circ\chi\circ f$ is null homotopic.

On the other hand, since $S^{2n-1}$ is $(2n-2)$-connected, the long exact sequence of the homotopy groups of the canonical evaluation fibration 
\[
\Omega^{2n-1}S^{2n-1}\stackrel{}{\longrightarrow}{\rm Map}(S^{2n-1}, S^{2n-1})\stackrel{{\rm ev}}{\longrightarrow} S^{2n-1},
\]
implies that $\pi_{i}({\rm Map}(S^{2n-1}, S^{2n-1}))\cong\pi_{i}(\Omega^{2n-1}S^{2n-1})$ for any $i\leq 2n-3$.
Meanwhile, the Freudenthal suspension theorem implies that $E: \pi_{i}(\Omega^{2n-1}S^{2n-1})\longrightarrow \pi_{i}(\Omega^{2n}S^{2n})$ is an isomorphism for any $i\leq 2n-3$. Since $l+1\leq 2n-3$ by assumption, combining these two statements implies that $[\Sigma Y, {\rm Map}(S^{2n-1}, S^{2n-1})]\stackrel{E_{\ast}}{\longrightarrow} [\Sigma Y, \Omega^{2n}S^{2n}]$ is an isomorphism, where $[-,-]$ denotes the set of based homotopy classes of based maps.
Hence $E\circ \chi\circ f$ being null homotopic implies that $\chi\circ f$ is as well. This proves the lemma.
\end{proof}

The other ingredient needed for Proposition~\ref{hextlemma} is some homotopy theoretic information about the gauge group $\mathcal{G}^{+}(\nu)$ and an associated topological monoid of fibrewise self-homotopy equivalences. 
As general notation, suppose that $G$ is a topological group, $X$ is a pointed space, 
and $P\longrightarrow X$ is a principal $G$-bundle classified by a map $f\colon X\longrightarrow BG$, 
where $BG$ is the classifying space of $G$. Then there is an evaluation fibration 
\begin{equation}\label{mapevfib} 
\nameddright{{\rm Map}_{f}^{\ast}(X,BG)}{}{{\rm Map}_{f}(X,BG)}{ev}{BG} 
\end{equation} 
where ${\rm Map}_{f}(X,BG)$ is the component of the space of continuous maps ${\rm Map}(X,BG)$ that 
contains the map $f$, ${\rm Map}_{f}^{\ast}(X,BG)$ is the component of the space of pointed 
continuous maps ${\rm Map}^\ast(X,BG)$ that contains $f$, and $ev$ evaluates a map at the basepoint of $X$. 

Consider the bundle~$\nu$ over $B$ with sphere bundle $S^{2n-1}\stackrel{i_\nu}{\longrightarrow}\partial V\stackrel{s_\nu}{\longrightarrow} B$ as in (\ref{SPnudiag}). The bundle $\nu$ is 
classified by a map 
\(\namedright{B}{}{BSO(2n)}\) 
that will also (ambiguously) be denoted by $\nu$. 
On the one hand, by \cite{Got} or \cite{AB} there is a homotopy equivalence
\begin{equation}\label{ABguage}
B\mathcal{G}^{+}(\nu)\simeq {\rm Map}_\nu(B, BSO(2n))
\end{equation}
where $B\mathcal{G}^{+}(\nu)$ is the classifying space of the gauge group $\mathcal{G}^{+}(\nu)$. 
Under this homotopy equivalence, the evaluation fibration~(\ref{mapevfib}) 
with $X=B$ and $G=SO(2n)$ becomes
\begin{equation}\label{guagefibeq}
{\rm Map}_\nu^\ast(B, BSO(2n)) \stackrel{g_\nu}{\longrightarrow}  B\mathcal{G}^{+}(\nu)\stackrel{{\rm ev}}{\longrightarrow}  BSO(2n). 
\end{equation} 
On the other hand, let ${\rm aut}_B(\partial V)$ be the topological monoid of fibrewise self-homotopy equivalences of $\partial V\stackrel{s_\nu}{\longrightarrow} B$ as a spherical fibration. 
Dold and Lashof~\cite{DL} showed that its classifying space $B{\rm aut}_B(\partial V)$ classifies the fibrewise homotopy equivalences classes of spherical fibrations over~$B$.
By \cite[Theorem 3.3]{BHMP} or \cite[Proposition 2.1]{BS}, there is a homotopy equivalence
\begin{equation}\label{ABaut}
B{\rm aut}_B(\partial V)\simeq {\rm Map}_\nu(B, B{\rm aut}(S^{2n-1})). 
\end{equation}
Under this homotopy equivalence, there is an evaluation fibration 
\begin{equation}\label{autfibeq}
{\rm Map}^\ast_\nu(B, B{\rm aut}(S^{2n-1})) \stackrel{g_s}{\longrightarrow}  B{\rm aut}_B(\partial V)\stackrel{{\rm ev}}{\longrightarrow}  B{\rm aut}(S^{2n-1}).
\end{equation}

We will relate the evaluation fibrations~(\ref{guagefibeq}) and~(\ref{autfibeq}) after a preliminary lemma.  
\begin{lemma} 
\label{G+aut} 
There are group homomorphisms
$\chi_\nu\colon\mathcal{G}^{+}(\nu)\stackrel{}{\longrightarrow} {\rm aut}_B(\partial V)$ 
and $\chi'\colon\namedright{SO(2n)}{}{{\rm aut}(S^{2n-1})}$ that satisfy a commutative diagram 
\[ 
\xymatrix{ 
\mathcal{G}^{+}(\nu) \ar[r]^{{\Omega {\rm ev}}} \ar[d]^{\chi_\nu} & SO(2n) \ar[d]^{\chi^\prime} \\
{\rm aut}_B(\partial V)\ar[r]^{{\Omega {\rm ev}}} & {\rm aut}(S^{2n-1}).  
   }  
\]
\end{lemma}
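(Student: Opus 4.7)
\textbf{Proof plan for Lemma~\ref{G+aut}.}
The plan is to construct $\chi_\nu$ and $\chi'$ as concrete ``restriction to the sphere'' homomorphisms and then verify commutativity of the square by evaluating a gauge transformation at the basepoint on both sides.

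First I would define $\chi'$: the standard linear action of $SO(2n)$ on $\mathbb{R}^{2n}$ preserves the unit sphere, giving a continuous group homomorphism
\(\chi'\colon\namedright{SO(2n)}{}{\mathrm{Homeo}(S^{2n-1})}\hookrightarrow\mathrm{aut}(S^{2n-1}).\)
Next, using the description of the gauge group recalled before~(\ref{ABguage}) — so that $\mathcal{G}^{+}(\nu)$ is identified (up to a contractible linear group) with the orientation-preserving fibrewise linear automorphisms of $\nu$ — restriction of any such automorphism to the unit sphere bundle $\partial V = S\nu$ produces a fibrewise self-homeomorphism over $B$, which is in particular a fibrewise self-homotopy equivalence of the spherical fibration $s_\nu$. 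This restriction is natural in $\phi$ and multiplicative, so it defines a continuous group homomorphism
\(\chi_\nu\colon\namedright{\mathcal{G}^{+}(\nu)}{}{\mathrm{aut}_B(\partial V)}.\)

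For the square, I would first identify the two $\Omega\mathrm{ev}$ maps with basepoint evaluation. Under the equivalences~(\ref{ABguage}) and~(\ref{ABaut}), the evaluation fibrations~(\ref{guagefibeq}) and~(\ref{autfibeq}) are principal bundles for the topological groups $\mathcal{G}^{+}(\nu)$ and $\mathrm{aut}_B(\partial V)$ respectively, so looping their structure maps to $BSO(2n)$ and $B\mathrm{aut}(S^{2n-1})$ gives (via the canonical $\Omega BG\simeq G$) group homomorphisms which, by construction of the equivalences in~\cite{Got,AB,BHMP,BS}, are precisely restriction of a (fibrewise) automorphism to the fibre over the basepoint $x_0\in B$. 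Given $\phi\in\mathcal{G}^{+}(\nu)$, both composites in the square then send $\phi$ to the restriction of $\phi$ to the unit sphere of $\nu_{x_0}$: along the top-right path, one first takes $\phi|_{\nu_{x_0}}\in SO(2n)$ and then lets $\chi'$ act on $S^{2n-1}$; along the left-bottom path, one first applies $\chi_\nu$ to obtain the fibrewise restriction of $\phi$ to $\partial V$, and then evaluates this at $x_0$. These agree on the nose as self-maps of $S^{2n-1}=S(\nu_{x_0})$.

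The main obstacle is checking that $\Omega\mathrm{ev}$ really does correspond to basepoint evaluation under the identifications~(\ref{ABguage}) and~(\ref{ABaut}); this is where a careful reading of the Gottlieb/Atiyah-Bott and Booth-Heath-Morgan-Piccinini/Crabb-Sutherland constructions is required. Once this compatibility is in hand the commutativity of the square is immediate from the two computations above, since the square already commutes strictly as a diagram of topological groups, not merely up to homotopy.
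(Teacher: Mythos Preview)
Your proposal is correct and follows essentially the same approach as the paper: both construct $\chi_\nu$ by letting gauge transformations act fibrewise on the sphere bundle and $\chi'$ via the standard $SO(2n)$-action on $S^{2n-1}$, and both verify commutativity by observing that each composite sends a gauge transformation to its restriction to the fibre over the basepoint. The paper's argument is terser and simply asserts the diagram commutes once these maps are defined, whereas you explicitly flag (and propose to check) that the horizontal maps $\Omega\mathrm{ev}$ really are basepoint evaluation under the identifications~(\ref{ABguage}) and~(\ref{ABaut}); the paper takes this identification for granted.
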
 

\begin{proof} 
Define 
$\chi_\nu: \mathcal{G}^{+}(\nu)\stackrel{}{\longrightarrow} {\rm aut}_B(\partial V)$ 
as follows. Regard the $n$-dimensional complex vector bundle~$\nu$ over $B$ as a 
$2n$-dimensional real oriented vector bundle. This has an associated principal $SO(2n)$-bundle 
\(\namedright{P}{}{B}\). 
An element $\mathfrak{g}\in\mathcal{G}^{+}(\nu)$ is an $SO(2n)$-equivariant automorphism 
of $P$. Define an automorphism $\chi_{\nu}(\mathfrak{g})\in {\rm aut}_{B}(\partial V)$ by 
having $\mathfrak{g}$ act fibrewise on the sphere bundle 
\(\nameddright{S^{2n-1}}{}{\partial V}{}{B}\). 
Note that the restriction of $\mathfrak{g}$ to the fibre over the basepoint of $B$ is an 
automorphism $\chi'(\mathfrak{g})$ of $S^{2n-1}$. Thus we obtain the commutative diagram asserted by the lemma. 
Further, since $\chi_{\nu}$ is defined via a fibrewise action, we have 
$\chi_{\nu}(\mathfrak{g}\circ\mathfrak{h})=\chi_{\nu}(\mathfrak{g})\circ\chi_{\nu}(\mathfrak{h})$, 
so $\chi_{\nu}$ is a group homomorphism, and hence so is $\chi'$. 
\end{proof} 

The fact that $\chi_{v}$ and $\chi'$ are group homomorphisms implies that they can be delooped, and thus so can the commutative diagram in the statement of Lemma~\ref{G+aut}. Therefore the evaluation fibrations~(\ref{guagefibeq}) and~(\ref{autfibeq}) imply the following.

\begin{lemma} 
\label{G+autev} 
There is a homotopy commutative diagram of homotopy fibrations  
\[
\xymatrix{ 
 {\rm Map}_\nu^\ast(B, BSO(2n))  \ar[d]^{\chi^\prime_\ast}  \ar[r]^<<<<<{g_\nu} & B\mathcal{G}^{+}(\nu) \ar[r]^{{{\rm ev}}} \ar[d]^{B\chi_\nu} & BSO(2n) \ar[d]^{B\chi^\prime} \\
 {\rm Map}_\nu^\ast(B, B{\rm aut}(S^{2n-1})) \ar[r]^<<<{g_s} & B{\rm aut}_B(\partial V)\ar[r]^{{{\rm ev}}} & B{\rm aut}(S^{2n-1})  
   } 
\] 
where $\chi'_{\ast}$ is induced by taking mapping spaces with respect to $B\chi'$.~$\qqed$ 
\end{lemma}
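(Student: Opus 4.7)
The plan is to assemble the diagram by delooping the square from Lemma~\ref{G+aut} and then invoking the naturality of the Atiyah-Bott/Gottlieb identifications in~(\ref{ABguage}) and~(\ref{ABaut}). Since $\chi_\nu$ and $\chi'$ are group homomorphisms by Lemma~\ref{G+aut}, applying the classifying space functor to the commutative square there yields a homotopy commutative square whose vertical arrows are the evaluation maps; this is precisely the right-hand square of the target diagram, giving ${\rm ev}\circ B\chi_\nu\simeq B\chi'\circ {\rm ev}$.

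Next, I would verify that under the equivalences~(\ref{ABguage}) and~(\ref{ABaut}), the delooped map $B\chi_\nu$ corresponds to post-composition with $B\chi'$, that is, to the map ${\rm Map}_\nu(B,BSO(2n))\rightarrow {\rm Map}_\nu(B,B{\rm aut}(S^{2n-1}))$ sending $f$ to $B\chi'\circ f$. This is a naturality property of the Atiyah-Bott correspondence: a gauge transformation of the principal $SO(2n)$-bundle classified by $\nu$ corresponds to a loop in the component of $\nu$ in ${\rm Map}(B,BSO(2n))$, and its fibrewise action on the associated sphere bundle via $\chi_\nu$ corresponds to the loop obtained by post-composition with $B\chi'$, since $\chi_\nu$ was constructed as a fibrewise action whose restriction to a fibre is $\chi'$.

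Finally, taking homotopy fibres of the evaluation maps over the common basepoint transfers the right-hand commutativity to the left-hand square. Because post-composition with $B\chi'$ sends pointed maps to pointed maps, it restricts on fibres to a map of the components $\mathrm{Map}_\nu^\ast(B,BSO(2n))\to\mathrm{Map}_\nu^\ast(B,B{\rm aut}(S^{2n-1}))$ which is, by definition, $\chi'_\ast$. Hence $g_s\circ\chi'_\ast\simeq B\chi_\nu\circ g_\nu$, and the full diagram homotopy commutes as claimed. The only step requiring care is the naturality statement in the middle paragraph; this is standard but is essentially the only content of the lemma beyond Lemma~\ref{G+aut}, and it can be made precise by unpacking the Atiyah-Bott identification in terms of pullback of principal bundles along a classifying map, applied functorially to the group homomorphism $\chi'$.
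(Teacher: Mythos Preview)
Your proposal is correct and follows essentially the same route as the paper. The paper's argument is the single sentence preceding the lemma: deloop the commutative square of Lemma~\ref{G+aut} (possible since $\chi_\nu$ and $\chi'$ are group homomorphisms) to obtain the right-hand square, then invoke the evaluation fibrations~(\ref{guagefibeq}) and~(\ref{autfibeq}) to identify the induced map on fibres; you do the same, only you are more explicit about the naturality of the Atiyah--Bott/Gottlieb identifications needed to recognize the induced fibre map as post-composition with $B\chi'$, a point the paper leaves implicit.
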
  

Recall from Proposition~\ref{tildeEprop} that there is a geometric pushout 
\[\xymatrix{
\partial V\times S^1 \ar[r]^{{\rm id} \times \iota} \ar[d]^{(\iota_c\times {\rm id})\circ \phi} & 
\partial V \times D^2 \ar[d]\\
N_c\times S^1 \ar[r] &
\widetilde{E} 
}\] 
and a map $\tau: S^1\longrightarrow \mathcal{G}^{+}(\nu)$ such that $\phi(v, t)=(\tau(t)v, t)$ 
for any $(v, t)\in \partial V\times S^1$. The purpose of the following proposition is 
to give conditions that will lead to a pushout map 
\(\namedright{\widetilde{E}}{}{N_{c}}\).

\begin{proposition}\label{hextlemma}
Let $n\geq 2$. Suppose that $B$ is a $(k-1)$-connected $l$-dimensional $CW$-complex such that $l\leq 2n-4$ and $k\geq 1$. Let $p$ be a prime such that $p>\frac{1}{2}(l-k)+1$, $p \not\in \mathcal{Q}_{k+2,l+2}$ and $H_\ast(B;\mathbb{Z})$ is $p$-torsion free. 
Then localized at the prime $p$, there exists a map $s: \partial V\times D^{2}\rightarrow N_c$ such that the diagram
\begin{equation} 
  \label{sdiag} 
  \diagram 
      \partial V\times S^{1}\rto^-{{\rm id}\times \iota}\dto^{(\iota_c\times {\rm id})\circ \phi} &  \partial V \times D^{2}\dto^{s} \\ 
    N_c\times S^{1}\rto^-{\pi_1} & N_c
  \enddiagram 
\end{equation} 
homotopy commutes, where $\pi_{1}$ is the projection onto the first factor.
\end{proposition}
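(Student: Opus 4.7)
\emph{Proof plan.} The plan is to take $s\colon \partial V \times D^2 \to N_c$ to be the composite $\partial V \times D^2 \to \partial V \to N_c$, where the first map is the projection and the second is $\iota_c$. Then $s \circ ({\rm id} \times \iota)(v,t) = \iota_c(v)$ while $\pi_1 \circ (\iota_c \times {\rm id}) \circ \phi(v,t) = \iota_c(\tau(t) v)$, so the proposition reduces to producing a $p$-local homotopy between the two maps $\partial V \times S^1 \to N_c$ given by $(v,t) \mapsto \iota_c(\tau(t) v)$ and $(v,t) \mapsto \iota_c(v)$. A null homotopy of the composite $\chi_\nu \circ \tau \colon S^1 \to \mathcal{G}^+(\nu) \to {\rm aut}_B(\partial V)$ would supply such a homotopy, since its adjoint is a fibrewise homotopy $\partial V \times S^1 \times I \to \partial V$ from $(v,t)\mapsto \tau(t)v$ to $(v,t) \mapsto v$, and post-composition with $\iota_c$ then gives the required homotopy. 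So the task becomes showing $(\chi_\nu)_\ast(\tau) = 0$ in $\pi_1({\rm aut}_B(\partial V))_{(p)}$.

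To establish this vanishing, I would localize at $p$ and apply the homotopy groups functor to the diagram of fibrations in Lemma~\ref{G+autev}. The two resulting long exact sequences fit into a commutative square whose top row is the connecting map $[\Sigma B, SO(2n)]_{(p)} \to \pi_1(\mathcal{G}^+(\nu))_{(p)}$, whose bottom row is $[\Sigma B, {\rm aut}(S^{2n-1})]_{(p)} \to \pi_1({\rm aut}_B(\partial V))_{(p)}$, and whose vertical maps are induced by $\chi'$ and $\chi_\nu$ respectively. Since $\pi_1(SO(2n)) = \mathbb{Z}/2$ vanishes $p$-locally for odd $p$, exactness of the top sequence forces the top horizontal map to be surjective. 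The hypotheses on $p$, $k$, $l$ and on $H_\ast(B;\mathbb{Z})$ are exactly those of Lemma~\ref{QimJ=0lemma}, and since $\chi'$ is the restriction of the $J$-homomorphism-style map $\chi$ to self-equivalences ${\rm aut}(S^{2n-1}) \subset {\rm Map}(S^{2n-1},S^{2n-1})$, that lemma implies the left vertical map $(\chi')_\ast$ is identically zero. Commutativity of the square together with surjectivity of the top row then forces $(\chi_\nu)_\ast$ to vanish on all of $\pi_1(\mathcal{G}^+(\nu))_{(p)}$, and in particular $(\chi_\nu)_\ast(\tau) = 0$.

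Adjointing a null homotopy of $\chi_\nu \circ \tau$ and post-composing with $\iota_c$ then delivers the homotopy of the first paragraph, and the diagram in~\eqref{sdiag} commutes up to homotopy as required. The main obstacle is setting up the commutative square of long exact sequences correctly and verifying that $\chi'$ genuinely factors through the map $\chi$ of Lemma~\ref{QimJ=0lemma} under the identifications coming from Lemma~\ref{G+autev}. Once the naturality in the diagram of Lemma~\ref{G+autev} is pinned down at the level of homotopy groups, the $p$-local vanishing is a direct diagram chase and its translation back into the required geometric homotopy is formal.
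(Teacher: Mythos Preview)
Your reduction in the first paragraph is exactly the paper's: define $s=\iota_c\circ\pi_1$, adjoint to reduce to showing $\chi_\nu\circ\tau\colon S^1\to{\rm aut}_B(\partial V)$ is $p$-locally null homotopic, then lift $\tau$ through the fibre of the looped Lemma~\ref{G+autev} diagram using $\pi_1(SO(2n))_{(p)}=0$. The overall shape is right.

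The gap is in your identification of the fibre term as $[\Sigma B,SO(2n)]$ (and similarly $[\Sigma B,{\rm aut}(S^{2n-1})]$). What actually appears is $\pi_1\bigl(\Omega\,{\rm Map}^*_\nu(B,BSO(2n))\bigr)=\pi_2\bigl({\rm Map}^*_\nu(B,BSO(2n))\bigr)$, the second homotopy group of the component of ${\rm Map}^*(B,BSO(2n))$ containing the \emph{nontrivial} classifying map $\nu$. Since $BSO(2n)$ is not a loop space, there is no a priori reason for this component to have the same homotopy groups as the component of the constant map, so the identification with $[\Sigma^2 B,BSO(2n)]\cong[\Sigma B,SO(2n)]$ does not follow formally. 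Without it, you cannot invoke Lemma~\ref{QimJ=0lemma}, which is a statement about post-composing maps $\Sigma B\to SO(2n)$ with $\chi$.

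This is precisely where the paper spends its effort. It stabilises along $j_\infty\colon SO(2n)\to SO$ and ${\rm aut}(S^{2n-1})\to{\rm aut}(S^\infty)$; because $BSO$ and $B{\rm aut}(S^\infty)$ are infinite loop spaces, the components of ${\rm Map}^*(B,-)$ are now all equivalent, and one obtains the honest identification $\Omega\,{\rm Map}^*_\nu(B,BSO)\simeq{\rm Map}^*(B,SO)$. Lemma~\ref{QimJ=0lemma} then applies to the adjoint $\Sigma B\to SO\to{\rm aut}(S^\infty)$. Finally one returns to the unstable setting using that $B{\rm aut}(S^{2n-1})\to B{\rm aut}(S^\infty)$ is a $(2n-2)$-equivalence and $\dim B\le 2n-4$. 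Your closing remark that ``the main obstacle is setting up the commutative square correctly'' is pointing at the right place, but the obstacle is not a routine naturality check: it is this component-shifting/stabilisation argument, and it is the technical heart of the proof.
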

\begin{proof}
By definition of $\phi$, $(\pi_1\circ (\iota_c\times {\rm id})\circ \phi)(v, t)=\iota_c(\tau(t)v)$. Thus the composite $\pi_1\circ (\iota_c\times {\rm id})\circ \phi$ has as adjoint the composite 
\[
S^1\stackrel{\tau}{\longrightarrow} \mathcal{G}^{+}(\nu)\stackrel{\chi_\nu}{\longrightarrow} {\rm aut}_B(\partial V)\stackrel{r}{\hookrightarrow} {\rm Map}(\partial V, \partial V)\stackrel{\iota_{c\ast}}{\longrightarrow} {\rm Map}(\partial V, N_c),
\]
where $r$ is the obvious inclusion and $\iota_{c\ast}$ is induced by the inclusion $\partial V\stackrel{\iota_c}{\hookrightarrow} N_c$.
Therefore, to prove the existence of~(\ref{sdiag}), it is equivalent to show that $\iota_{c\ast}\circ r\circ \chi_\nu\circ \tau$ extends to a map 
\(s\colon\namedright{D^{2}}{}{{\rm Map}(\partial V, N_c)}\), which in turn is equivalent to showing that 
$\iota_{c\ast}\circ r\circ \chi_\nu\circ \tau$ is null homotopic. In fact, we claim that $\chi_\nu\circ \tau$ is null homotopic after localization at the prime $p$, and this will prove the lemma.

Localize spaces and maps at $p$. Looping the diagram in the statement of Lemma~\ref{G+autev} 
gives a homotopy fibration diagram 
\[
\xymatrix{ 
 \Omega {\rm Map}_\nu^\ast(B, BSO(2n))  \ar[d]^{\Omega\chi^\prime_\ast}  \ar[r]^<<<<<{\Omega g_\nu} & \mathcal{G}^{+}(\nu) \ar[r]^{{\Omega {\rm ev}}} \ar[d]^{\chi_\nu} & SO(2n) \ar[d]^{\chi^\prime} \\
 \Omega {\rm Map}_\nu^\ast(B, B{\rm aut}(S^{2n-1})) \ar[r]^<<<{\Omega g_s} & {\rm aut}_B(\partial V)\ar[r]^{{{\rm ev}}} & {\rm aut}(S^{2n-1}).   
   } 
\] 
Since $n\geq 2$ we have $\pi_{1}(SO(n))\cong\mathbb{Z}/2$. Therefore, as $p\in\mathcal{Q}_{k+2,l+2}$, 
we have $p\neq 2$, so the composite 
$S^1\stackrel{\tau}{\longrightarrow} \mathcal{G}^{+}(\nu)\stackrel{\Omega {\rm ev}}{\longrightarrow} SO(2n)$ 
is null homotopic. Thus $\tau$ lifts to a map $\widetilde{\tau}: S^1\longrightarrow \Omega {\rm Map}_{\nu}^\ast(B, BSO(2n))$ such that $\tau\simeq \Omega g_\nu\circ \widetilde{\tau}$.
The homotopy commutativity of this homotopy fibration diagram therefore implies that $\chi_\nu\circ\tau$ is homotopic to the composite
\[
S^1\stackrel{\widetilde{\tau}}\longrightarrow \Omega {\rm Map}_{\nu}^\ast(B, BSO(2n)) \stackrel{\Omega \chi'_\ast}{\longrightarrow} \Omega {\rm Map}_{\nu}^\ast(B, B{\rm aut}(S^{2n-1})) \stackrel{\Omega g_s}{\longrightarrow} {\rm aut}_B(\partial V).
\] 
We will show that $\Omega \chi'_{\ast}\circ\widetilde{\tau}$ is null homotopic, implying that 
$\Omega g_{s}\circ \Omega \chi'_{\ast}\circ\widetilde{\tau}$ is null homotopic, and therefore  
that $\chi_{\nu}\circ\tau$ is null homotopic, as required. 

It remains to show that $\Omega \chi'_{\ast}\circ\widetilde{\tau}$ is null homotopic, for which we pass to stable homotopy. Consider the homotopy commutative diagram
\begin{equation}\label{taustableeq}
\begin{gathered}
\xymatrix{ 
S^1 \ar[r]^<<<{\widetilde{\tau}}  \ar[dr]_{\widetilde{\tau}_\infty}& 
\Omega {\rm Map}_{\nu}^\ast(B, BSO(2n))  \ar[r]^{\Omega \chi'_\ast}   \ar[d]^{j_{\infty\ast}} &
\Omega {\rm Map}_{\nu}^\ast(B, B{\rm aut}(S^{2n-1}))  \ar[d]^{j_{\infty\ast}} \\
&\Omega {\rm Map}_{\nu}^\ast(B, BSO)  \ar[r]^{\Omega \chi'_\ast}  &
\Omega {\rm Map}_{\nu}^\ast(B, B{\rm aut}(S^{\infty})),
   }  
   \end{gathered}
\end{equation}
where by abuse of notation $SO(2n) \stackrel{j_{\infty}}{\longrightarrow}SO$ and ${\rm aut}(S^{2n-1}) \stackrel{j_{\infty}}{\longrightarrow} {\rm aut}(S^{\infty})$ are the stabilization maps, $\widetilde{\tau}_\infty=j_{\infty\ast}\circ \widetilde{\tau}$, and $SO\stackrel{\chi^\prime}{\longrightarrow} {\rm aut}(S^{\infty})$ is the homotopy colimit of the maps $\chi^\prime$. 
In general, let ${\rm aut}_{1}(S^{m})$ be the component of ${\rm aut}(S^{m})$ containing 
the identity map. There is an evaluation fibration 
\(\nameddright{{\rm aut}_{1}(S^{m})}{}{{\rm aut}(S^{m})}{ev}{S^{m}}\), 
and if ${\rm Map}_{1}(S^{m},S^{m})$ is the component of ${\rm Map}(S^{m},S^{m})$ containing the identity map, 
then the subspace inclusion 
\(\namedright{{\rm aut}_{1}(S^{m})}{}{{\rm Map}_{1}(S^{m},S^{m})}\) 
is a homotopy equivalence. Thus the evaluation fibration induces exact sequences 
\(\nameddright{\pi_{k+m}(S^{m})}{}{\pi_{k}({\rm aut}(S^{m}))}{}{\pi_{k}(S^{m})}\) 
for every $k\geq 1$. In particular, ${\rm aut}(S^{m})$ has the same connectivity as $S^{m}$. 
Consequently, the map $B{\rm aut}(S^{2n-1}) \stackrel{Bj_{\infty}}{\longrightarrow}B {\rm aut}(S^{\infty})$ is a homotopy equivalence up to dimension $2n-2$. By hypothesis, ${\rm dim}(B)+2=l+2\leq 2n-2$, so it follows that the composition $\Omega \chi'_{\ast}\circ\widetilde{\tau}$ is null homotopic if and only if $j_{\infty\ast}\circ \Omega \chi'_{\ast}\circ\widetilde{\tau}$ is. By the homotopy commutativity of (\ref{taustableeq}), it remains to show that $\Omega \chi'_{\ast}\circ\widetilde{\tau}_\infty$ is null homotopic.

Generically, if ${\rm Map}^{\ast}_{f}(X, \Omega Y)$ is the component of the space ${\rm Map}^\ast(X, \Omega Y)$ of pointed 
continuous maps from $X$ to a loop space $\Omega Y$ containing a given map $f$, then as ${\rm Map}^\ast(X,\Omega Y)\cong \Omega  {\rm Map}^\ast(X,Y)$ is a loop space, by \cite[Exercise 3 in Section 3.C]{Hat} there is a natural 
homotopy equivalence ${\rm Map}^{\ast}_{f}(X, \Omega Y)\simeq {\rm Map}_{0}^\ast( X,\Omega Y)$, 
where the right side is the component of ${\rm Map}^\ast(X,\Omega Y)$ containing the basepoint.
It follows that $\Omega {\rm Map}^{\ast}_{f}(X, \Omega Y) \simeq \Omega {\rm Map}_{0}^\ast( X, \Omega Y)= \Omega {\rm Map}^\ast(X,\Omega Y)\cong {\rm Map}^\ast(X,\Omega^2 Y)$. Hence, as it is well known that both $BSO$ and $B{\rm aut}(S^{\infty})$ are infinite loop spaces, the composition $\Omega \chi'_\ast\circ \widetilde{\tau}_\infty$ in Diagram (\ref{taustableeq}) can be rewritten as
\[
S^1\stackrel{\widetilde{\tau}_\infty}{\longrightarrow}  {\rm Map}^\ast(B, SO) \stackrel{\chi'_\ast}{\longrightarrow}  {\rm Map}^\ast(B, {\rm aut}(S^{\infty})).
\]

It remains to show that $\chi'_{\ast}\circ\widetilde{\tau}_\infty$ is null homotopic.
The adjoint of 
$\chi'_{\ast}\circ\widetilde{\tau}_\infty$ is the composite 
\[ 
\Sigma B\stackrel{\widetilde{\tau}^\#_\infty}{\longrightarrow} SO \stackrel{\chi^\prime}{\longrightarrow} {\rm aut}(S^{\infty}), 
\] 
where $\widetilde{\tau}^\#_\infty$ is the adjoint of $\widetilde{\tau}_\infty$. Consider the longer composite
\[
\Sigma B\stackrel{\widetilde{\tau}^\#_\infty}{\longrightarrow} SO\stackrel{\chi^\prime}{\longrightarrow} {\rm aut}(S^{\infty})\stackrel{j}{\hookrightarrow}{\rm Map}(S^{\infty}, S^{\infty})
\] 
where $j$ is the inclusion of the components corresponding to maps of degree $\pm 1$. The composite $SO\stackrel{\chi^\prime}{\longrightarrow}{\rm aut}(S^{\infty})  \stackrel{j}{\hookrightarrow} {\rm Map}(S^{\infty}, S^{\infty})$ is the standard stabilized action $\chi$. Hence, by Lemma \ref{QimJ=0lemma} the composite 
$j\circ\chi'\circ\widetilde{\tau}^\#_\infty$ is null homotopic. Since $\Sigma B$ is path-connected, 
its image in ${\rm aut}(S^{\infty})$ lies in a single path-component, for which $j$ has a left inverse. Thus  
$\chi^\prime\circ \widetilde{\tau}^\#_\infty$ is null homotopic. Hence its adjoint $\chi'_\ast\circ \widetilde{\tau}_\infty$ 
is also null homotopic, as required. 
\end{proof}

\noindent 
\textbf{Proof of Theorems \ref{loopblowupthmintro}, \ref{loopblowupthmintrorational} and \ref{dichotomythmintro}}.
We first prove the homotopy decomposition for the loop space of the blow-up $\widetilde{N}$ 
in Theorem \ref{loopblowupthmintro}. Recall from~(\ref{S1tildeEeq}) that there is a principal $S^{1}$-bundle 
\(\nameddright{S^{1}}{\widetilde{j}}{\widetilde{E}}{\widetilde{s}}{\widetilde{N}}\). 
We will establish a loop space decomposition of $\widetilde{E}$ using the methods in \cite[Section 2]{HT2}. 
For the remainder of this subsection, suppose that $l\leq 2n-4$ and localize 
spaces and maps at a prime $p$ satisfying the conditions of Theorem \ref{loopblowupthmintro}. By Lemma \ref{Bslemma}, the prime $p$ satisfies $p>\frac{1}{2}(l-k)+1$, $p \not\in \mathcal{Q}_{k+2,l+2}$ and $H_\ast(B;\mathbb{Z})$ is $p$-torsion free. In particular, we can apply Proposition~\ref{hextlemma}. 

Recall from Proposition~\ref{tildeEprop} that there is a geometric pushout 
\[\begin{gathered}
\xymatrix{
\partial V\times S^1 \ar[r]^{{\rm id} \times \iota} \ar[d]^{(\iota_c\times {\rm id})\circ \phi} & 
\partial V \times D^2 \ar[d]\\
N_c\times S^1 \ar[r] &
\widetilde{E}. 
}
\end{gathered}\]

\begin{lemma} 
   \label{CQ} 
  There exists a map 
   \(t\colon\namedright{\widetilde{E}}{}{N_c}\) 
   such that the composite 
   \(\nameddright{N_c\times S^1}{}{\widetilde{E}}{t}{N_c}\) 
   is homotopic to the projection onto the first factor and the composite 
   \(\nameddright{\partial V\times D^2}{}{\widetilde{E}}{t}{N_c}\) 
   is homotopic to 
   \(\namedright{\partial V\times D^2}{s}{N_c}\). 
\end{lemma}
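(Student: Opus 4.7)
The plan is to obtain $t$ as an induced map from the (homotopy) universal property of the pushout defining $\widetilde{E}$. By Proposition~\ref{tildeEprop}, $\widetilde{E}$ fits into a geometric pushout with corner $\partial V\times S^{1}$ and legs $\mathrm{id}\times\iota$ and $(\iota_c\times\mathrm{id})\circ\phi$; by Lemma~\ref{3pushoutlemma} this is also a homotopy pushout, so the semi-universal property displayed in diagram~(\ref{hpushout-univ}) is available. Throughout the argument I would work $p$-locally, noting that $\widetilde{E}$ is nilpotent because it is a principal $S^{1}$-bundle over the nilpotent space $\widetilde{N}$, so the localization makes sense.

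To feed the semi-universal property I need a pair of maps out of the other two corners of the pushout that agree up to homotopy after pulling back to $\partial V\times S^{1}$. Take $r=\pi_{1}\colon N_{c}\times S^{1}\to N_{c}$ (projection onto the first factor) and take $s\colon\partial V\times D^{2}\to N_{c}$ to be the map produced by Proposition~\ref{hextlemma}. The content of Proposition~\ref{hextlemma} is precisely the required compatibility: the square in~(\ref{sdiag}) homotopy commutes, i.e.
\[
\pi_{1}\circ(\iota_c\times\mathrm{id})\circ\phi \;\simeq\; s\circ(\mathrm{id}\times\iota).
\]

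Applying the semi-universal property of the homotopy pushout then produces a map $t\colon\widetilde{E}\to N_{c}$ such that $t\circ j_{N_{c}\times S^{1}}\simeq\pi_{1}$ and $t\circ j_{\partial V\times D^{2}}\simeq s$, where $j_{(\cdot)}$ denotes the two pushout inclusions into $\widetilde{E}$. These two homotopies are exactly the two conclusions asserted by the lemma.

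There is no real obstacle: the lemma is a direct packaging of Proposition~\ref{hextlemma} via the pushout structure of $\widetilde{E}$. The only point worth recording explicitly in the writeup is the passage from the geometric pushout to a homotopy pushout (Lemma~\ref{3pushoutlemma}), since the semi-universal property~(\ref{hpushout-univ}) only gives existence of the induced map $t$ (uniqueness fails in general), which is all that is needed here.
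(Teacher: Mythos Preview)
Your proposal is correct and matches the paper's proof essentially line for line: the paper also invokes the geometric pushout~(\ref{tildeEpushoutdiag}) from Proposition~\ref{tildeEprop}, notes that a geometric pushout is a homotopy pushout, and then uses the homotopy commutativity of~(\ref{sdiag}) from Proposition~\ref{hextlemma} to obtain $t$ via the pushout's semi-universal property. Your additional remarks on $p$-localization and the non-uniqueness of $t$ are accurate and compatible with the paper's setup.
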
 

\begin{proof}
Consider the diagram 
\[\label{tdiag}
\begin{gathered}
\xymatrix{ 
  \partial V\times S^{1}\ar[r]^{{\rm id}\times \iota}\ar[d]^{(\iota_c\times {\rm id})\circ \phi} & \partial V\times D^2\ar[d]\ar@/^/[ddr]^{s} &  \\ 
   N_c\times S^{1}\ar[r]\ar@/_/[drr]_{\pi_{1}} & \widetilde{E} \ar@{.>}[dr]^(0.4){t} & \\ 
   & & N_c\, , 
   }  
   \end{gathered}
\]
where the inner square is a geometric pushout by~(\ref{tildeEpushoutdiag}) and the outer square homotopy 
commutes by~(\ref{sdiag}). Since a geometric pushout is a homotopy pushout, there is a map $t$ that makes the two triangular regions homotopy commute.  
\end{proof}

Consequently, if $i_1: N_c\longrightarrow N_c\times S^1$ is the inclusion of the first factor 
then the composite 
\(\namedddright{N_c}{i_{1}}{N_c\times S^{1}}{}{\widetilde{E}}{t}{N_c}\) 
is homotopic to $\pi_{1}\circ i_{1}$, which is homotopic to the identity map on~$N_c$. 
This proves the following.
\begin{lemma} 
   \label{tinverse} 
      The map $t: \widetilde{E}\longrightarrow N_c$ has a right homotopy inverse.~$\qqed$
\end{lemma}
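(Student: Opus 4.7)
The plan is very short because essentially all the work has already been done in Lemma~\ref{CQ} and in the paragraph preceding the statement of Lemma~\ref{tinverse}. I would take as the candidate right homotopy inverse the composite
\[
    \sigma\colon N_c\xrightarrow{\;i_{1}\;} N_{c}\times S^{1}\longrightarrow\widetilde{E},
\]
where $i_{1}$ is the inclusion of the first factor and the second arrow is the left-hand map in the geometric pushout~(\ref{tildeEpushoutdiag}) defining $\widetilde{E}$.

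First I would compose $\sigma$ with $t$ and reorganise the result as
\[
    t\circ\sigma\;\simeq\;t\circ\bigl(N_{c}\times S^{1}\to\widetilde{E}\bigr)\circ i_{1}.
\]
By the first homotopy conclusion of Lemma~\ref{CQ}, the composite $t\circ(N_{c}\times S^{1}\to\widetilde{E})$ is homotopic to the projection $\pi_{1}\colon N_{c}\times S^{1}\to N_{c}$. Thus $t\circ\sigma\simeq\pi_{1}\circ i_{1}$, which is the identity on $N_{c}$ since $i_{1}$ is inclusion of the first factor. This gives the required right homotopy inverse.

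There is essentially no obstacle: the proof is a direct chase through the pushout square together with the property of $t$ established in Lemma~\ref{CQ}. The only subtlety worth mentioning is to ensure the diagram chase is carried out at the level of homotopy classes rather than strict equalities, which is unproblematic since we only need the conclusion up to homotopy.
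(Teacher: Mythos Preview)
Your proposal is correct and is essentially identical to the paper's own argument, which appears in the sentence immediately preceding the lemma: take the composite $N_{c}\xrightarrow{i_{1}}N_{c}\times S^{1}\to\widetilde{E}$, and use Lemma~\ref{CQ} to see that postcomposing with $t$ gives $\pi_{1}\circ i_{1}\simeq\mathrm{id}_{N_{c}}$.
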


We now identify some homotopy fibres associated to the maps in in Lemma~\ref{CQ}. 
Define the spaces~$F$ and $H$ and the map $g$ by the homotopy fibrations  
\begin{equation} 
\label{N0cube1} 
\begin{split} 
\nameddright{F}{g}{\partial V}{\iota_c}{N_{c}} \\ 
\nameddright{H}{}{\widetilde{E}}{t}{N_{c}}.  
\end{split} 
\end{equation} 
If 
\(i_{1}\colon\namedright{\partial V}{}{\partial V\times D^2}\) 
is the inclusion of the first factor, then as $s\circ i_{1}\simeq \iota_c$ there is a homotopy commutative diagram of 
homotopy fibrations 
\[\diagram 
     F\rto^-{g}\dto^{f'} & \partial V\rto^-{\iota_c}\dto^{i_{1}} & N_{c}\ddouble \\ 
     F'\rto & \partial V\times D^2\rto^-{s} & N_{c}.  
  \enddiagram\] 
that defines the space $F'$ and the map $f'$.   
Since $i_{1}$ is a homotopy equivalence, the five-lemma applied to the long exact sequence of 
homotopy groups implies that $f'$ induces an isomorphism on homotopy groups and so is a 
homotopy equivalence since all spaces have the homotopy type of $CW$-complexes. 
Thus there is a homotopy fibration 
\begin{equation} 
\label{N0cube2} 
\nameddright{F}{g'}{\partial V\times D^2}{s}{N_{c}} 
\end{equation} 
where $g'=i_{1}\circ g$. Next, define the space $F''$ and the map $f''$ by the homotopy commutative diagram of 
homotopy fibrations  
\[\diagram 
       F''\rto\dto^{f''} & \partial V\times S^1\rto^-{s\circ(id\times\iota)}\dto^{id\times\iota} 
           & N_{c}\ddouble \\ 
       F\rto^-{g'} & \partial V\times D^2\rto^-{s} & N_{c}. 
  \enddiagram\] 
This homotopy fibration diagram implies that $F''$ is the homotopy pullback of $g'$ and $id\times\iota$. 
Since $g'= i_{1}\circ g$, there is an iterated homotopy pullback diagram 
\begin{equation} 
  \label{N0cubea} 
  \diagram 
      F\times S^1\rto^-{g\times id}\dto^{\pi_{1}} & \partial V\times S^1\rto^-{id\times id}\dto^{\pi_{1}} 
           & \partial V\times S^1\dto^{id\times\iota} \\ 
      F\rto^-{g} & \partial V\rto^-{i_{1}} & \partial V\times D^2. 
  \enddiagram 
\end{equation}  
Here, the right square is a homotopy pullback since $D^2$ is contractible, and the left square 
is a homotopy pullback by the naturality of the projection $\pi_{1}$. Since the outer rectangle 
is the homotopy pullback of $g^\prime= i_{1}\circ g$ and $id\times\iota$, we see that $F''\simeq F\times S^1$ 
and $f''\simeq\pi_{1}$. Thus there is a homotopy fibration 
\begin{equation} 
\label{N0cube3} 
\llnameddright{F\times S^1}{g\times id}{\partial V\times S^1}{s\circ(id\times\iota)}{N_{c}}. 
\end{equation} 

Therefore, composing each of the four corners of~(\ref{tildeEpushoutdiag}) with the map 
\(\namedright{\widetilde{E}}{t}{N_c}\) 
and taking homotopy fibres, from Lemma \ref{CQ}, (\ref{N0cube1}), (\ref{N0cube2}) and~(\ref{N0cube3}) 
we obtain a homotopy commutative cube 
\begin{equation}
  \label{Qcube} 
  \spreaddiagramcolumns{0.2pc}\spreaddiagramrows{-1pc} 
   \diagram
      F\times S^{1}\rrto^-{a}\drto^-{b}\ddto^-(0.33){g\times id} & & F\dline^-{}\drto & \\
      & S^{1}\rrto\ddto^(0.25){i_{2}} & \dto & H\ddto \\
      \partial V\times S^{1}\rline^(0.6){{\rm id}\times \iota}\drto_(0.4){(\iota_c\times {\rm id})\circ \phi\ \ } & \rto & \partial V\times D^2\drto & \\
      & N_c\times S^{1}\rrto & & \widetilde{E}, 
  \enddiagram 
\end{equation}
in which the bottom face is a homotopy pushout and the four sides are homotopy 
pullbacks, the map~$i_2$ is the inclusion to the second factor, and the maps $a$ and $b$ are induced 
maps of fibres. Mather's Cube Lemma~(Theorem \ref{2cubthm}) implies that the top face is a homotopy pushout. 

We would like to identify the maps $a$ and $b$ in~(\ref{Qcube}). 
As notation, let $I=[0,1]$ be the unit interval with basepoint at $0$. For 
pointed spaces $X$ and $Y$ the (reduced) \emph{join} $X\ast Y$ is defined as the quotient space 
\[X\ast Y=(X\times I\times Y)/\sim\] 
where $(x,0,y)\sim (x',0,y)$, $(x,1,y)\sim (x,1,y')$ and $(\ast,t,\ast)\sim (\ast,0,\ast)$ for all 
$x,x'\in X$, $y,y'\in Y$ and $t\in I$. Equivalently, if $CX$ and $CY$ are the reduced cones on $X$ 
and $Y$ respectively, then there is a pushout 
\[\diagram 
       X\times Y\rto\dto & X\times CY\dto \\ 
       CX\times Y\rto & X\ast Y. 
  \enddiagram\] 
As $CX$ and $CY$ are contractible, this implies that there is a homotopy pushout 
\begin{equation} 
  \label{joinpo} 
  \diagram 
        X\times Y\rto^-{\pi_{1}}\dto^{\pi_{2}} & X\dto \\ 
        Y\rto & X\ast Y 
  \enddiagram 
\end{equation} 
where $\pi_{1}$ and $\pi_{2}$ are the projections. It is well known that there is a homotopy equivalence 
$X\ast Y\simeq\Sigma X\wedge Y$. 

\begin{lemma} 
   \label{cubeab} 
   The maps $a$ and $b$ in~(\ref{Qcube}) are homotopic to the projections 
   onto the first and second factor respectively. Consequently, there is a homotopy equivalence 
   $H\simeq F\ast S^{1}\simeq \Sigma^{2}F$. 
\end{lemma}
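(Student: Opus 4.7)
The plan is to identify the maps $a$ and $b$ by exploiting the fact that the four side faces of the cube~(\ref{Qcube}) are homotopy pullbacks, and then apply the join pushout~(\ref{joinpo}) to the resulting homotopy pushout on top.

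For the map $a\colon F\times S^1\to F$, I would use the front face of the cube, which gives a homotopy pullback
\[
\xymatrix{
F\times S^1 \ar[r]^-{a}\ar[d] & F\ar[d]^{g'} \\
\partial V\times S^1 \ar[r]^-{id\times\iota} & \partial V\times D^2
}
\]
over the base $N_c$, where the vertical maps are the fibres of $s\circ(id\times\iota)$ and $s$ respectively, given by $g\times id$ and $g'=i_1\circ g$ as in~(\ref{N0cube2}) and~(\ref{N0cube3}). The iterated homotopy pullback~(\ref{N0cubea}) already identifies the pullback of $g'$ along $id\times\iota$ as $F\times S^1$ with the map to $F$ being the projection $\pi_1$. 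Hence $a\simeq\pi_1$.

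For the map $b\colon F\times S^1\to S^1$, I would use the left face of the cube, which yields a homotopy pullback
\[
\xymatrix{
F\times S^1 \ar[r]^-{b}\ar[d]^{g\times id} & S^1 \ar[d]^{i_2} \\
\partial V\times S^1 \ar[r]^-{(\iota_c\times id)\circ\phi} & N_c\times S^1.
}
\]
The key observation is that the second-factor projection $\pi_2\colon N_c\times S^1\to S^1$ is a retraction of $i_2$, and because $\phi(v,t)=(\tau(t)v,t)$ respects the second coordinate, we have $\pi_2\circ(\iota_c\times id)\circ\phi=\pi_2\colon\partial V\times S^1\to S^1$. Postcomposing the above (homotopy commutative) square with $\pi_2$ gives $b=\pi_2\circ i_2\circ b\simeq\pi_2\circ(\iota_c\times id)\circ\phi\circ(g\times id)=\pi_2\circ(g\times id)=\pi_2\colon F\times S^1\to S^1$. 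Thus $b\simeq\pi_2$.

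With $a\simeq\pi_1$ and $b\simeq\pi_2$ in hand, the top face of~(\ref{Qcube}) becomes the homotopy pushout
\[
\xymatrix{
F\times S^1\ar[r]^-{\pi_1}\ar[d]^{\pi_2} & F\ar[d] \\
S^1\ar[r] & H,
}
\]
which is precisely the defining homotopy pushout~(\ref{joinpo}) for the join $F\ast S^1$. Therefore $H\simeq F\ast S^1\simeq\Sigma F\wedge S^1\simeq\Sigma^2 F$, as claimed. I expect the main subtlety to be the verification that $\pi_2\circ(\iota_c\times id)\circ\phi=\pi_2$ at the level needed to conclude $b\simeq\pi_2$; this is where the explicit form $\phi(v,t)=(\tau(t)v,t)$ from Proposition~\ref{tildeEprop} is essential, and it avoids having to invoke the $p$-local triviality of $\chi_\nu\circ\tau$ a second time.
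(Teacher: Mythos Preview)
Your argument is correct and follows essentially the same route as the paper: you identify $a$ via the iterated homotopy pullback~(\ref{N0cubea}), identify $b$ by postcomposing the left face with $\pi_2$ and using the explicit form $\phi(v,t)=(\tau(t)v,t)$ to see $\pi_2\circ(\iota_c\times id)\circ\phi=\pi_2$, and then recognize the top face as the join pushout. The only slip is terminological: the face of~(\ref{Qcube}) containing $a$ and $id\times\iota$ is the \emph{rear} face in the paper's convention, not the front face, but your description of its content is correct.
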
 

\begin{proof} 
The rear face of the cube~(\ref{Qcube}) is the iterated homotopy pullback in~(\ref{N0cubea}), 
therefore $a\simeq\pi_{1}$. The map $b$ is induced by the homotopy pullback diagram 
\[\diagram 
      F\times S^{1}\rto^-{g\times {\rm id}}\dto^{b} & \partial V\times S^{1}\rto^-{s\circ ({\rm id}\times \iota)}\dto^{(\iota_c\times {\rm id})\circ \phi} & N_c\ddouble \\  
      S^{1}\rto^-{i_{2}} & N_c\times S^{1}\rto^-{\pi_{1}} & N_c,
  \enddiagram\] 
where the right square commutes up to homotopy by Lemma \ref{CQ}. Generically, let $\pi_{2}$ 
be the projection onto the second factor. Observe that by definition of $\phi$, we have $(\pi_2\circ (\iota_c\times {\rm id})\circ \phi)(v, t)=\pi_2(\iota_c(\tau(t)v), t)=t=\pi_2(v, t)$, that is, $\pi_2\circ (\iota_c\times {\rm id})\circ \phi=\pi_2$. Thus $\pi_2\circ (\iota_c\times {\rm id})\circ \phi\circ (g\times {\rm id})=\pi_{2}\circ(g\times id)=\pi_2$, and therefore $b=\pi_2\circ i_2\circ b$ is homotopic to $\pi_2$. 
Therefore the homotopy pushout giving $H$ in the top face of~(\ref{Qcube}) 
is equivalent, up to homotopy, to that given by the projections 
\(\namedright{F\times S^{1}}{\pi_{1}}{F}\) 
and 
\(\namedright{F\times S^{1}}{\pi_{2}}{S^{1}}\). 
From the discussion before the lemma, this homotopy pushout gives the join $F\ast S^{1}$. 
\end{proof} 

We are now ready to prove Theorem \ref{loopblowupthmintro}.

\begin{proof}[Proof of Theorem \ref{loopblowupthmintro}]
Recall the principal $S^1$-bundle 
\(\nameddright{S^{1}}{}{\widetilde{E}}{}{\widetilde{N}}\)
from~(\ref{S1tildeEeq}) and recall that $F$ is the homotopy fibre of the inclusion $\iota_c:\partial V\hookrightarrow N_c$ of the boundary. 
For the homotopy fibration
\[
H\stackrel{}{\longrightarrow} \widetilde{E}\stackrel{t}{\longrightarrow} N_c
\]
in (\ref{N0cube1}), Lemma~\ref{cubeab} implies that $H\simeq \Sigma^2 F$. 
Since $t$ has a right homotopy inverse by Lemma \ref{tinverse}, this homotopy fibration splits after looping to give a homotopy equivalence
\[\label{tildeEspliteq}
\Omega \widetilde{E}\simeq \Omega N_c \times \Omega \Sigma^2 F.
\]
Combining this homotopy equivalence with Lemma \ref{canbundletriv}, we obtain the 
asserted homotopy equivalence $\Omega \widetilde{N}\simeq S^1\times \Omega N_c\times \Omega \Sigma^2 F$.
\end{proof} 

Theorem~\ref{loopblowupthmintrorational} can be proved as a corollary of Theorem~\ref{loopblowupthmintro}. 

\begin{proof}[Proof of Theorem~\ref{loopblowupthmintrorational}] 
Since $B$ is a finite dimensional manifold, the homology $H_\ast(B;\mathbb{Z})$ is $p$-torsion free for sufficient large primes $p$. Similarly, for sufficient large primes $p$, $p>\frac{1}{2}(l-k)+1$ and $(p-1) \nmid 2s$ for any $k+2\leq 4s \leq l+2$. Then we can choose a large enough prime $p$ satisfying the three conditions on $p$ in Theorem \ref{loopblowupthmintro}, and hence Theorem \ref{loopblowupthmintro} can apply to give a $p$-local homotopy equivalence
\[
\Omega\widetilde{N}\simeq S^{1}\times\Omega N_{c}\times\Omega\Sigma^{2} F.
\] 
A further localization away from $p$ now implies the asserted rational homotopy equivalence.
\end{proof}

Using Theorem \ref{loopblowupthmintrorational} we can now also prove Theorem \ref{dichotomythmintro}. 
Write $A\simeq_{\mathbb{Q}} B$ for a rational homotopy equivalence between spaces $A$ and $B$.

\begin{proof}[Proof of Theorem \ref{dichotomythmintro}]
By Theorem \ref{loopblowupthmintrorational} there is a rational homotopy equivalence 
\[\Omega \widetilde{N}\simeq_{\mathbb{Q}} S^1\times \Omega N_c\times\Omega \Sigma^2 F,\] 
implying that 
\[
\pi_\ast(\widetilde{N})\otimes \mathbb{Q}\cong \mathbb{Q}(2)\oplus (\pi_\ast(N_c)\otimes \mathbb{Q})\oplus (\pi_\ast(\Sigma^2 F)\otimes \mathbb{Q}),
\] 
where $\mathbb{Q}(i)$ a copy of the vector space $\mathbb{Q}$ of degree $i$.
Recall that $F$ is the homotopy fibre of the inclusion $\iota_c: \partial V\stackrel{}{\rightarrow} N_c$. Since $N_c$ is a smooth compact $(l+2n)$-manifold, we have \mbox{$H^{l+2n}(N_c,\partial V;\mathbb{Z})\cong \mathbb{Z}$}. This implies that $\iota_c$ is not a rational homotopy equivalence. In particular $F$ is not rationally contractible. 
Hence $\Sigma^2 F$ is rationally a wedge of spheres, which is rationally hyperbolic unless $F\simeq_{\mathbb{Q}} S^q$ for some $q$.
It follows that $\widetilde{N}$ is rationally elliptic if and only if $N_c$ is rationally elliptic and $F\simeq_{\mathbb{Q}} S^q$ for some $q$. In this case 
\[
\pi_\ast(\widetilde{N})\otimes \mathbb{Q}\cong \mathbb{Q}(2) \oplus (\pi_\ast(N_c)\otimes \mathbb{Q})\oplus (\pi_\ast(S^{q+2})\otimes \mathbb{Q}),
\] 
and $\pi_\ast(S^{q+2})\otimes \mathbb{Q}\cong \mathbb{Q}(q+2)$ if $q$ is odd or 
$\pi_\ast(S^{q+2})\otimes \mathbb{Q}\cong\mathbb{Q}(q+2)\oplus \mathbb{Q}(2q+3)$ if $q$ is even.
Otherwise $\widetilde{N}$ is rationally hyperbolic, and this happens if and only if either $N_c$ is rationally hyperbolic or $F\not\simeq_{\mathbb{Q}} S^q$ for any $q$. This proves the theorem.
\end{proof}

\part{Blow ups from a homotopy theoretic point of view}
\label{part2}
In this part, we study blow ups from a purely homotopy theoretic point of view.
In Section \ref{sec:topblowup}, we show that there is a homotopy decomposition of $\Omega\widetilde{N}$ given a condition concerning a natural free circle action on $\partial V$ (\ref{SPnudiag}).
In particular, we prove Theorem \ref{loopNtildeintro} as Theorem \ref{loopNtilde}.
The free circle action is a special case of the homotopy action associated to a principal homotopy fibration.
In Section \ref{sec: act}, we analyze such homotopy actions in general using methods from~\cite{BT2}, and relate it to Whitehead products. This allows us to determine when the condition for Theorem \ref{loopNtildeintro} holds for blow ups at a point in Section \ref{sec: stab}.
In \cite{HT2}, we studied the homotopy theory of the connected sums $N\#\mathbb{C}P^n$ and $N\#\mathbb{H}P^{\frac{n}{2}}$, which can be viewed as the complex and quaternionic blow up of $N$ at a point. 
Here, by different methods, we recover the result of \cite{HT2} in the complex case, and refine it in the quaternionic case. In particular, we prove Theorem \ref{stabthmintro}.

\section{The integral homotopy theory of a blow up} 
\label{sec:topblowup} 
In Part \ref{part1}, to study the loop homotopy of the geometric blow up $\widetilde{N}$ \eqref{blowuppushoutdiag2}, we studied the canonical circle bundle $S^{1}\longrightarrow\widetilde{E}\longrightarrow\widetilde{N}$ in~(\ref{S1tildeEeq}). In Proposition \ref{tildeEprop}, we gave a geometric description of $\widetilde{E}$ that was crucial to proving the local homotopy decomposition of $\Omega\widetilde{N}$ in Theorem \ref{loopblowupthmintro}. In this section, from a more homotopy theoretic point of view, we study the topological blow up $\widetilde{N}$ (\ref{blowuppushoutdiag}) through a principal homotopy fibration $S^{1}\longrightarrow\widetilde{E}\longrightarrow\widetilde{N}$, and prove an integral decomposition of $\Omega\widetilde{N}$ under a condition depending on the existence of a certain homotopy. Further analysis and applications will be given in the subsequent sections.

We follow the notation from the previous sections. Consider the topological blow up $\widetilde{N}$ defined by the pushout (\ref{blowuppushoutdiag}):
\[ 
\xymatrix{
\partial V \ar[r]^{q} \ar[d]^{\iota_c} &
P\nu\ar[d]^{j} \\
N_c\ar[r]^{r} &
\widetilde{N}. 
}\]
By Remark \ref{top-geo-blow-rmk}, the topological blow up $\widetilde{N}$ is canonically homotopy equivalent to the geometric blow up in \eqref{blowuppushoutdiag2}. In particular, the canonical circle bundle (\ref{S1tildeEeq}) over the geometric blow up implies a principal homotopy fibration 
\[
S^{1}\longrightarrow\widetilde{E}\longrightarrow\widetilde{N}
\]
over the topological blow up $\widetilde{N}$, and all the homotopy results in the previous sections, including  Lemmas \ref{tildeelemma} and \ref{canbundletriv}, hold for the topological blow up $\widetilde{N}$.

Recall by Lemma~\ref{canbundletriv} there is a homotopy equivalence $\Omega\widetilde{N}\simeq S^{1}\times\Omega\widetilde{E}$. Therefore to identify the homotopy type of $\Omega \widetilde{N}$ it suffices to determine the homotopy type of $\Omega\widetilde{E}$. We will apply Mather's Cube 
Lemma~(Theorem \ref{2cubthm}) twice, first to describe $\widetilde{E}$ as a homotopy pushout and 
then, after showing that there is a map 
\(\namedright{\widetilde{E}}{}{N_{c}}\), 
to describe its homotopy fibre.

Abusing notation, let $\widetilde{e}: \widetilde{N}\stackrel{}{\longrightarrow} \mathbb{C}P^{\infty}$ 
be the map representing $\widetilde{e}\in H^2(\widetilde{N})$ in Lemma \ref{tildeelemma}. 
By that lemma, $N_c\stackrel{r}{\longrightarrow}\widetilde{N}\stackrel{\widetilde{e}}{\longrightarrow} \mathbb{C}P^{\infty}$ is null homotopic while $P\nu\stackrel{j}{\longrightarrow} \widetilde{N}\stackrel{\widetilde{e}}{\longrightarrow} \mathbb{C}P^{\infty}$ is homotopic to the map~$e$ representing the generator $e\in H^2(P\nu)$.
Map all four corners of the pushout to $\widetilde{N}$, compose with $\widetilde{e}$, and take homotopy fibres. 
This results in a homotopy commutative cube  
\begin{equation} 
  \label{NEtilde} 
  \spreaddiagramcolumns{-0.8pc}\spreaddiagramrows{-0.8pc} 
  \diagram
      \partial V\times S^{1}\rrto^-{a}\drto^{b}\ddto^{\pi_1} & & \partial V\dline^<<<<{q}\drto & \\
      & N_{c}\times S^{1}\rrto\ddto^<<<<{\pi_1} & \dto & \widetilde{E}\ddto \\
      \partial V\rline^>>>>{q}\drto^{\iota_c} & \rto & P\nu\drto^{j} & \\
      & N_{c}\rrto^{r} & & \widetilde{N} 
  \enddiagram 
\end{equation} 
for some maps $a$ and $b$, where the four sides are homotopy pullbacks and the bottom face is a 
homotopy pushout. By Mather's Cube Lemma~(Theorem \ref{2cubthm}) the top face is also a homotopy pushout. To go further it is necessary 
to identify $a$ and $b$. 

In general, a homotopy fibration 
\[\nameddright{F}{}{E}{}{B}\] 
has a connecting map 
\(\partial\colon\namedright{\Omega B}{}{F}\)  
and a homotopy action 
\[\theta\colon\namedright{F\times\Omega B}{}{F}\] 
whose restriction to $F$ is the identity map and whose restriction to $\Omega B$ is $\partial$. 
The homotopy action has the property that there is a homotopy commutative diagram 
\begin{equation} 
  \label{actiondgrm} 
  \diagram 
      F\times\Omega B\rto^-{\theta}\dto^{\pi_{1}} & F\dto \\ 
      F\rto & E, 
  \enddiagram 
\end{equation} 
where $\pi_1$ is the projection onto the first factor.  
 
\begin{lemma} 
   \label{abidentify} 
   The map $a$ in~(\ref{NEtilde}) is homotopic to the homotopy action 
   \(\theta\colon\namedright{\partial V\times S^{1}}{}{\partial V}\) 
   arising from the homotopy fibration 
   \(\nameddright{\partial V}{q}{P\nu}{e}{\mathbb{C}P^{\infty}}\). 
   The map $b$ in~(\ref{NEtilde}) is homotopic to the product map 
   \(\namedright{\partial V\times S^{1}}{\iota_c\times {\rm id}}{N_{c}\times S^{1}}\). 
\end{lemma}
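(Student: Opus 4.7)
The plan is to exploit the fact that the cube~(\ref{NEtilde}) was produced by applying Mather's Cube Lemma (Theorem~\ref{2cubthm}) to the defining pushout of $\widetilde{N}$ composed with the map $\widetilde{e}\colon\namedright{\widetilde{N}}{}{\mathbb{C}P^{\infty}}$ of Lemma~\ref{tildeelemma}. By that theorem, each of the four side faces of the cube is a homotopy pullback, so $a$ and $b$ are characterized, up to homotopy, as the maps filling the top face and the left face respectively as homotopy pullbacks. Identifying $a$ and $b$ therefore reduces to recognizing the standard models for these two pullbacks.

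First I would pin down the top face of the cube. By Lemma~\ref{tildeelemma} the composite $\widetilde{e}\circ j$ is homotopic to $e$, so by~(\ref{ecfibrations}) the right vertical of the top face is the fibre inclusion $q\colon\namedright{\partial V}{}{P\nu}$. Moreover, the composite $\widetilde{e}\circ j\circ q\simeq e\circ q$ is null homotopic by~(\ref{ecfibrations}), so the homotopy fibre on the upper-left corner is canonically $\partial V\times\Omega\mathbb{C}P^{\infty}\simeq\partial V\times S^{1}$ with fibre inclusion the projection $\pi_{1}$ onto the first factor. Hence the top face is a homotopy pullback square with $a$ on top, $q$ as the right vertical, $\pi_{1}$ as the left vertical, and $q$ along the bottom. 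By the standard pullback characterization of the homotopy action (a strengthening of~(\ref{actiondgrm})), the homotopy action $\theta$ of the fibration $\nameddright{\partial V}{q}{P\nu}{e}{\mathbb{C}P^{\infty}}$ is precisely the map making this square a homotopy pullback, so $a\simeq\theta$.

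Turning to $b$, the left face of the cube is a homotopy pullback square with $b$ along the top, $\iota_{c}$ along the bottom, and both vertical maps equal to the first-factor projections $\pi_{1}$ coming from the null homotopies of $\widetilde{e}\circ r$ and $\widetilde{e}\circ r\circ\iota_{c}$. Since the right vertical $\pi_{1}\colon\namedright{N_{c}\times S^{1}}{}{N_{c}}$ is a trivial $S^{1}$-fibration, its pullback along $\iota_{c}$ is canonically $\partial V\times S^{1}$ with the pullback projection to $N_{c}\times S^{1}$ equal to $\iota_{c}\times{\rm id}$. Uniqueness of homotopy pullback maps then forces $b\simeq\iota_{c}\times{\rm id}$.

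The main technical point is the bookkeeping of null homotopies: one has to verify that the trivializations of the fibres over $\mathbb{C}P^{\infty}$ furnished by the Cube Lemma agree with those coming from the specific null homotopies of $e\circ q$ and $\widetilde{e}\circ r$ provided by Lemma~\ref{tildeelemma}, so that the vertical fibre inclusions really are the canonical projections $\pi_{1}$ and the right vertical in the top face is $q$. Once this compatibility is in place, the two identifications follow from the standard uniqueness of homotopy pullback squares.
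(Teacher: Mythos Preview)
Your proposal is correct and follows essentially the same approach as the paper: identify the relevant side faces of the cube as homotopy pullbacks and recognize the canonical maps filling them. One terminological slip: what you call the ``top face'' containing $a$ is the \emph{rear} face of~(\ref{NEtilde}); the top face is the homotopy pushout produced by the Cube Lemma, and $a$ appears as its top-left edge, which coincides with the top edge of the rear pullback face. With that correction your argument matches the paper's.
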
 

\begin{proof} 
Since $\theta$ is a homotopy action, by~(\ref{actiondgrm}) there is a homotopy pullback 
\[
\begin{gathered}
\xymatrix{
\partial V\times S^1 \ar[r]^{\theta} \ar[d]^{\pi_1}  &
\partial V \ar[d]^{q}  \\
\partial V \ar[r]^{q}  &
P\nu.
}
\end{gathered}
\]
This recovers the homotopy pullback in the rear face of (\ref{NEtilde}). Hence $a\simeq \theta$. 
Next, the product of the pullbacks 
\[\diagram
      A\rto^-{=}\dto^{f} & A\dto^{f} & & C\rto\dto^{=} & \ast\dto \\ 
      B\rto^-{=} & B & & C\rto & \ast 
  \enddiagram\] 
is the pullback 
\[\diagram 
      A\times C\rto^-{\pi_{1}}\dto^{f\times {\rm id}} & A\dto^{f} \\ 
      B\times C\rto^-{\pi_{1}} & B. 
  \enddiagram\] 
Applying this to the homotopy pullback in the left face of~(\ref{NEtilde}) shows that $b\simeq \iota_c\times {\rm id}$. 
\end{proof} 

Next, we show that, under a certain hypothesis, there is an alternative description 
of $\widetilde{E}$ as a homotopy pullback, and we then use this to show that $N_{c}$ retracts off 
$\widetilde{E}$ in a strong way.  

\begin{lemma} 
   \label{2tildeElemma} 
   Suppose that there is a homotopy commutative diagram 
   \begin{equation} 
      \label{partialVaction} 
      \diagram 
      \partial V\times S^{1}\rto^-{\theta}\dto^{\pi_{1}} & \partial V\dto^{\iota_c} \\ 
      \partial V\rto^-{\iota_c} & N_{c}.
   \enddiagram 
   \end{equation} 
Then there is a homotopy pushout
   \[
      \label{2tildeEdiag} 
   \xymatrix{ 
      \partial V\times S^{1}\ar[r]^-{\pi_1}\ar[d]^{\iota_c\times {\rm id}} &  \partial V\ar[d]    \\ 
      N_c\times S^{1}\ar[r]  &  \widetilde{E}.    }
         \]
\end{lemma}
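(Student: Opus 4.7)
The plan is to use the fact, established by the top face of cube~(\ref{NEtilde}) together with Lemma~\ref{abidentify}, that $\widetilde{E}$ is the homotopy pushout of the diagram
\[\partial V\xleftarrow{\theta}\partial V\times S^{1}\xrightarrow{\iota_{c}\times\mathrm{id}}N_{c}\times S^{1}\]
with structure maps $\alpha\colon\partial V\to\widetilde{E}$ and $\beta\colon N_{c}\times S^{1}\to\widetilde{E}$. I will show that under the hypothesis $\widetilde{E}$ together with the same pair $(\alpha,\beta)$ is also a homotopy pushout of the ``$\pi_{1}$-diagram'' in the statement of the lemma, which is exactly what needs to be proved.

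The first step is to extract a factorisation of $\alpha$ through $\iota_{c}$. Let $i_{1}\colon N_{c}\to N_{c}\times S^{1}$ be the inclusion $n\mapsto(n,\ast)$ and set $r=\beta\circ i_{1}\colon N_{c}\to\widetilde{E}$. Because $\theta$ is the homotopy action of the principal fibration $\partial V\xrightarrow{q}P\nu\xrightarrow{e}\mathbb{C}P^{\infty}$, the restriction $\theta|_{\partial V\times\{\ast\}}$ is homotopic to $\mathrm{id}_{\partial V}$. Restricting the pushout homotopy $\alpha\circ\theta\simeq\beta\circ(\iota_{c}\times\mathrm{id})$ to $\partial V\times\{\ast\}$ therefore yields $\alpha\simeq r\circ\iota_{c}$. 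Combining this with the hypothesis $\iota_{c}\circ\theta\simeq\iota_{c}\circ\pi_{1}$ gives
\[\alpha\circ\pi_{1}\simeq r\circ\iota_{c}\circ\pi_{1}\simeq r\circ\iota_{c}\circ\theta\simeq\alpha\circ\theta\simeq\beta\circ(\iota_{c}\times\mathrm{id}),\]
so the $\pi_{1}$-square with structure maps $(\alpha,\beta)$ homotopy commutes.

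The final step verifies the universal property. Given any pair of maps $\phi_{1}\colon\partial V\to T$ and $\phi_{2}\colon N_{c}\times S^{1}\to T$ with $\phi_{1}\circ\pi_{1}\simeq\phi_{2}\circ(\iota_{c}\times\mathrm{id})$, the same restriction trick produces $\phi_{1}\simeq r_{T}\circ\iota_{c}$ for $r_{T}=\phi_{2}\circ i_{1}$, and invoking the hypothesis once more delivers
\[\phi_{1}\circ\theta\simeq r_{T}\circ\iota_{c}\circ\theta\simeq r_{T}\circ\iota_{c}\circ\pi_{1}\simeq\phi_{1}\circ\pi_{1}\simeq\phi_{2}\circ(\iota_{c}\times\mathrm{id}).\]
Thus every cocone on the $\pi_{1}$-diagram is automatically a cocone on the $\theta$-diagram, so factors through $\widetilde{E}$ by the already-established universal property of $\widetilde{E}$ as the $\theta$-pushout. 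The principal obstacle is upgrading this extension property to a genuine homotopy equivalence between $\widetilde{E}$ and the abstract homotopy pushout of the $\pi_{1}$-diagram: running the argument with $T$ equal to that abstract pushout (realised as the double mapping cylinder) in both directions produces a pair of comparison maps that preserve the structure maps $(\alpha,\beta)$, and the semi-universal property of homotopy pushouts recorded after diagram~(\ref{hpushout-univ}) is then enough to force these comparison maps to be mutually inverse homotopy equivalences.
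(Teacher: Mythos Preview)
Your argument is sound until the final sentence, where there is a genuine gap. You correctly show that every cocone on the $\pi_{1}$-diagram is a cocone on the $\theta$-diagram (and the symmetric statement is equally easy), and you construct comparison maps $f\colon\widetilde{E}\to P$ and $g\colon P\to\widetilde{E}$ compatible with the structure maps. However, the ``semi-universal property'' recorded after~(\ref{hpushout-univ}) is explicitly flagged in the paper as weak: the extension is \emph{not} unique. Hence knowing that $g\circ f$ is compatible with $(\alpha,\beta)$ does not by itself force $g\circ f\simeq\mathrm{id}_{\widetilde{E}}$; different choices of homotopy filling the square can lead to genuinely different self-maps of a homotopy pushout. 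Your argument can be repaired, for instance by a Mayer--Vietoris/five-lemma check that $g$ (or $f$) induces an isomorphism on homology or homotopy groups, but that additional step is missing and is not a triviality.

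The paper sidesteps this issue entirely by a shearing trick. It uses the homotopy action to define a self-homotopy-equivalence $s'\colon\partial V\times S^{1}\to\partial V\times S^{1}$, $s'(v,t)=(\theta(v,t^{-1}),t)$, with homotopy inverse $s(v,t)=(\theta(v,t),t)$. One then checks directly that $\theta\circ s'\simeq\pi_{1}$ (by associativity of the action) and, using the hypothesis $\iota_{c}\circ\theta\simeq\iota_{c}\circ\pi_{1}$, that $(\iota_{c}\times\mathrm{id})\circ s'\simeq\iota_{c}\times\mathrm{id}$. Thus the $\pi_{1}$-diagram and the $\theta$-diagram differ only by precomposing at the apex with a homotopy equivalence, so their homotopy pushouts agree on the nose---no back-and-forth comparison maps or uniqueness arguments are needed. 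This is both shorter and avoids the delicate point on which your approach stumbles.
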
 
\begin{proof} 
In general, a homotopy action 
\(\namedright{F\times\Omega B}{\theta}{F}\) 
has an associativity property: the composite 
\(\lnameddright{F\times\Omega B\times\Omega B}{\theta\times {\rm id}}{F\times\Omega B}{\theta}{F}\) 
is homotopic to 
\(\lnameddright{F\times\Omega B\times\Omega B}{{\rm id}\times\mu}{F\times\Omega B}{\theta}{F}\), 
where $\mu$ is the loop multiplication on $B$. So if $s,s'$ are the shearing maps 
\[s,  s'\colon\namedright{F\times\Omega B}{}{F\times\Omega B}\] 
defined by $s(x,y)=(\theta(x,y),y)$ and $s'(x,y)=(\theta(x,y^{-1}),y)$ then $s\circ s'$ and $s'\circ s$ 
are homotopic to the identity map on $F\times\Omega B$. In particular, $s$ is a homotopy 
equivalence. Observe as well that the composite  
\(\nameddright{F\times\Omega B}{s}{F\times\Omega B}{\pi_{1}}{F}\) 
is homotopic to $\theta$ while the composite 
\(\nameddright{F\times\Omega B}{s'}{F\times\Omega B}{\pi_{1}}{F}\) 
is homotopic to $\theta\circ({\rm id}\times {\rm -id})$, where id is the identity map on $\Omega B$.  

In our case, consider the diagram 
\begin{equation} 
  \label{sheardgrm} 
  \diagram 
     \partial V\times S^{1}\rto^-{s'}\dto^{\iota_{c}\times {\rm id}} 
          & \partial V\times S^{1}\rto^-{\theta}\dto^{\iota_{c}\times {\rm id}} & \partial V\dto \\ 
     N_{c}\times S^{1}\rdouble & N_{c}\times S^{1}\rto & \widetilde{E}. 
  \enddiagram 
\end{equation} 
We first show that the diagram homotopy commutes. For the left square, 
$(\iota_{c}\times {\rm id})\circ s'$ is determined by its projections to $N_{c}$ 
and $S^{1}$. The projection to~$N_{c}$ gives a string of identities  
\[\pi_{1}\circ(\iota_{c}\times {\rm id})\circ s'=\iota_{c}\circ\pi_{1}\circ s'= 
      \iota_{c}\circ\theta\circ({\rm id}\times {\rm -id})\simeq\iota_{c}\circ\pi_{1}\circ({\rm id}\times {\rm -id})= 
      \iota_{c}\circ\pi_{1}=\pi_{1}\circ(\iota_{c}\times {\rm id}).\] 
Here, from left to right, the first equality holds by the naturality of the projection map, the second holds 
by the property of $s'$ mentioned above, the third holds by the hypothesis that 
$\iota_c\circ\theta\simeq \iota_c\circ\pi_{1}$, and the fourth and fifth hold by the naturality 
of the projection. Thus the left square homotopy commutes after projection to $N_{c}$. 
It also homotopy commutes after projection to $S^{1}$ more straightforwardly, since $s'$ 
projects to the identity map on $S^{1}$. Hence the left square in~(\ref{sheardgrm}) homotopy commutes. 
The right square in~(\ref{sheardgrm}) homotopy commutes since it is the homotopy pushout appearing in 
the top face of the cube~(\ref{NEtilde}) with the maps $a$ and $b$ identified by 
Lemma~\ref{abidentify}. 

Next, since $s'$ is a homotopy equivalence and the right square is a homotopy pushout, 
the homotopy commutativity of~(\ref{sheardgrm}) implies that the outer rectangle is also a homotopy pushout. 
Finally, since $\theta\simeq\pi_{1}\circ s$ and $s\circ s'$ is homotopic to the identity map on 
$\partial V\times S^{1}$, the top row in~(\ref{sheardgrm}) is homotopic to $\pi_{1}$. 
Thus we obtain the homotopy pushout asserted by the lemma. 
\end{proof}

\begin{lemma} 
   \label{ENc} 
  Under the hypothesis of Lemma \ref{2tildeElemma}, there is a map 
   \(\namedright{\widetilde{E}}{}{N_{c}}\) 
   satisfying a homotopy commutative diagram 
   \[\xymatrix{ 
      \partial V\times S^{1}\ar[r]^-{\pi_1}\ar[d]^{\iota_c\times {\rm id}} &  \partial V\ar[d]^{j} \ar@/^/[ddr]^{\iota_c} &  \\ 
      N_c\times S^{1}\ar[r] \ar@/_/[drr]_{\pi_{1}} &  \widetilde{E} \ar[dr] & \\ 
      & & N_{c}. 
   }\]
\end{lemma}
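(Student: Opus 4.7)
The plan is to construct the map $\widetilde{E}\to N_{c}$ directly from the semi-universal property of the homotopy pushout established in Lemma~\ref{2tildeElemma}. That lemma exhibits $\widetilde{E}$ as the homotopy pushout of the span $N_{c}\times S^{1}\stackrel{\iota_{c}\times {\rm id}}{\longleftarrow}\partial V\times S^{1}\stackrel{\pi_{1}}{\longrightarrow}\partial V$, with structural maps $j\colon\partial V\to\widetilde{E}$ and $N_{c}\times S^{1}\to\widetilde{E}$. To invoke the semi-universal property~(\ref{hpushout-univ}) with target $N_{c}$, I will take the outgoing maps $\iota_{c}\colon\partial V\to N_{c}$ and $\pi_{1}\colon N_{c}\times S^{1}\to N_{c}$.

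The compatibility condition to verify is that the two composites $\partial V\times S^{1}\to N_{c}$ agree up to homotopy. Here both composites $\iota_{c}\circ\pi_{1}$ and $\pi_{1}\circ(\iota_{c}\times {\rm id})$ are literally the map $(v,t)\mapsto\iota_{c}(v)$, so the compatibility holds on the nose. The semi-universal property then produces a map $\widetilde{E}\to N_{c}$ making both triangles in the statement of the lemma homotopy commute: the composite $N_{c}\times S^{1}\to\widetilde{E}\to N_{c}$ is homotopic to $\pi_{1}$, while the composite $\partial V\stackrel{j}{\to}\widetilde{E}\to N_{c}$ is homotopic to $\iota_{c}$.

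There is no substantial obstacle, since the compatibility is actually strict equality. The role of the hypothesis of Lemma~\ref{2tildeElemma} is only indirect: it is what allows $\widetilde{E}$ to be replaced by the simpler pushout whose right-hand vertical is $\pi_1$, and once that replacement is in place the map $\widetilde{E}\to N_{c}$ and its compatibilities fall out of the universal property with no further work.
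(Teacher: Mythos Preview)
Your proof is correct and follows essentially the same approach as the paper: both invoke the homotopy pushout description of $\widetilde{E}$ from Lemma~\ref{2tildeElemma}, observe that the compatibility $\iota_{c}\circ\pi_{1}=\pi_{1}\circ(\iota_{c}\times{\rm id})$ holds strictly by naturality of the projection, and then apply the semi-universal property~(\ref{hpushout-univ}) to produce the map $\widetilde{E}\to N_{c}$.
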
 

\begin{proof} 
The inner square is a homotopy pushout by Lemma \ref{2tildeElemma}. The naturality of the projection map implies that the composite 
\(\nameddright{\partial V\times S^{1}}{\iota_c\times {\rm id}}{N_{c}\times S^{1}}{\pi_{1}}{N_{c}}\) 
equals $\iota_c\circ\pi_{1}$. 
The homotopy pushout property of $\widetilde{E}$ therefore implies that there is a map 
\(\namedright{\widetilde{E}}{}{N_{c}}\) 
that makes the entire diagram homotopy commute. 
\end{proof}

Suppose that there is a homotopy commutative diagram~(\ref{partialVaction}) 
so that Lemma~\ref{ENc} holds. Recall that the homotopy fibre of the map 
\(\namedright{\partial V}{\iota_c}{N_{c}}\) 
is defined as $F$. Start with the homotopy pushout in the statement of Lemma~\ref{ENc}. Map all 
four corners of the pushout to~$\widetilde{E}$, compose with the map 
\(\namedright{\widetilde{E}}{}{N_{c}}\), 
and then take homotopy fibres. This results in a homotopy commutative cube 
\[
  \label{EFtilde} 
  \spreaddiagramcolumns{-0.8pc}\spreaddiagramrows{-0.8pc} 
  \diagram
      F\times S^{1}\rrto^-{\pi_1}\drto^{\pi_2}\ddto & & F\dline\drto & \\
      & S^{1}\rrto\ddto & \dto & \widetilde{F}\ddto \\
      \partial V\times S^{1}\rline^-{\pi_1}\drto^{\iota_c\times {\rm id}} & \rto & \partial V\drto & \\
      & N_{c}\times S^{1}\rrto & & \widetilde{E}, 
  \enddiagram 
\]
where all four sides are homotopy pullbacks, and $\pi_1$ and $\pi_2$ are the projections. 
By Mather's Cube Lemma~(Theorem \ref{2cubthm}), the top face of the cube is also a homotopy pushout. 
The top face is an instance of the homotopy pushout~(\ref{joinpo}), so we 
immediately obtain the following.

\begin{lemma} 
   \label{tildeFidentify} 
   There is a homotopy equivalence $\widetilde{F}\simeq S^{1}\ast F$.  ~$\qqed$
\end{lemma}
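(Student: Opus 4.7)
The plan is essentially a recognition argument, since the heavy lifting has already been done in the lead-up to the statement. By the discussion immediately preceding the lemma, Mather's Cube Lemma (Theorem \ref{2cubthm}) applies to the cube whose bottom face is the homotopy pushout of Lemma \ref{ENc}, so the top face of that cube is itself a homotopy pushout. It therefore suffices to identify this top face as a standard model for the join.

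First I would read off the top face: the top-left corner is $F\times S^{1}$, and the two arrows emanating from it are the projections $\pi_{1}\colon F\times S^{1}\to F$ and $\pi_{2}\colon F\times S^{1}\to S^{1}$, with $\widetilde{F}$ occupying the fourth corner as the homotopy pushout. This is exactly the shape of the homotopy pushout \eqref{joinpo} with $X=F$ and $Y=S^{1}$, so the universal property of homotopy pushouts (up to homotopy equivalence) yields $\widetilde{F}\simeq F\ast S^{1}$.

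Finally, I would invoke the natural homotopy equivalence $F\ast S^{1}\simeq S^{1}\ast F$, which comes from the evident symmetry in the double-mapping-cylinder description of the join (alternatively, both are homotopy equivalent to $\Sigma^{2}F$ via the identifications $X\ast Y\simeq\Sigma X\wedge Y$ recorded before Lemma~\ref{cubeab}). Composing these equivalences yields $\widetilde{F}\simeq S^{1}\ast F$, as asserted.

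There is no real obstacle here; the only step requiring minor care is confirming that the maps labelled $\pi_{1}$ and $\pi_{2}$ in the top face really are the product projections, and not some twisted variants. This is built into the construction of the cube by taking homotopy fibres of the four corners of the pushout in Lemma~\ref{ENc} over the common base $N_{c}$: the map $\pi_{1}\colon\partial V\times S^{1}\to\partial V$ already has fibre $F\times S^{1}\to F$ given by projection, and similarly the map $S^{1}\to N_{c}\times S^{1}$ has fibre $S^{1}\to S^{1}$ equal to the identity, so the induced map $F\times S^{1}\to S^{1}$ on fibres is the projection $\pi_{2}$.
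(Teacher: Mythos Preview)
Your proposal is correct and follows essentially the same approach as the paper: the paper also simply observes that the top face of the cube is a homotopy pushout by Mather's Cube Lemma and then recognizes it as an instance of the join pushout~(\ref{joinpo}), yielding the result immediately. Your added remarks about verifying the top maps are the genuine projections and about the symmetry $F\ast S^{1}\simeq S^{1}\ast F$ are harmless elaborations of what the paper leaves implicit.
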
 

This leads to the following. 

\begin{proposition} 
   \label{loopEtilde} 
   Suppose that there is a homotopy commutative diagram~(\ref{partialVaction}). Then 
   there is a homotopy fibration 
   \(\nameddright{\Sigma^{2} F}{}{\widetilde{E}}{}{N_{c}}\) 
   that splits after looping to give a homotopy equivalence 
   \[\Omega\widetilde{E}\simeq\Omega N_{c}\times\Omega\Sigma^{2} F.\] 
\end{proposition}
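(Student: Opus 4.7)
The plan is to combine Lemmas~\ref{ENc} and~\ref{tildeFidentify} with a standard splitting argument. Lemma~\ref{ENc} produces a map
\(t\colon\namedright{\widetilde{E}}{}{N_{c}}\),
and Lemma~\ref{tildeFidentify} identifies the homotopy fibre of $t$ as $\widetilde{F}\simeq S^{1}\ast F$. Since $X\ast Y\simeq \Sigma X\wedge Y$, we have $S^{1}\ast F\simeq \Sigma S^{1}\wedge F \simeq \Sigma^{2} F$, and this gives the asserted homotopy fibration
\(\nameddright{\Sigma^{2} F}{}{\widetilde{E}}{t}{N_{c}}\).

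For the splitting after looping, the key observation is that $t$ admits a right homotopy inverse. By Lemma~\ref{ENc}, the composite
\(\nameddright{N_{c}\times S^{1}}{}{\widetilde{E}}{t}{N_{c}}\)
is homotopic to $\pi_{1}$. Let
\(i_{1}\colon\namedright{N_{c}}{}{N_{c}\times S^{1}}\)
be the inclusion of the first factor. Then the composite
\(\namedddright{N_{c}}{i_{1}}{N_{c}\times S^{1}}{}{\widetilde{E}}{t}{N_{c}}\)
is homotopic to $\pi_{1}\circ i_{1}={\rm id}_{N_{c}}$, so the map
\(\nameddright{N_{c}}{i_{1}}{N_{c}\times S^{1}}{}{\widetilde{E}}\)
is a right homotopy inverse of $t$.

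It remains to deduce the loop space decomposition. Looping the homotopy fibration
\(\nameddright{\Sigma^{2} F}{}{\widetilde{E}}{t}{N_{c}}\)
yields a homotopy fibration
\(\nameddright{\Omega\Sigma^{2} F}{}{\Omega\widetilde{E}}{\Omega t}{\Omega N_{c}}\)
of loop spaces, hence of $H$-spaces. Since $t$ has a right homotopy inverse, so does $\Omega t$, and standard theory then gives a homotopy equivalence
\[\Omega\widetilde{E}\simeq\Omega N_{c}\times\Omega\Sigma^{2} F,\]
as asserted. I do not expect a significant obstacle here: all the geometric input (the construction of $\widetilde{E}$, the identification of the fibre, and the retraction map $t$) has already been assembled in the preceding lemmas, so the proof reduces to assembling these pieces and invoking the standard splitting of a principal fibration with a sectioned base map after looping.
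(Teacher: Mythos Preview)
Your proof is correct and follows essentially the same approach as the paper: both invoke Lemma~\ref{ENc} for the map to $N_{c}$ and its right homotopy inverse via the first-factor inclusion, Lemma~\ref{tildeFidentify} for the identification of the fibre as $S^{1}\ast F\simeq\Sigma^{2}F$, and then the standard splitting after looping.
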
 

\begin{proof} 
By definition, $\widetilde{F}$ is the homotopy fibre of the map 
\(\namedright{\widetilde{E}}{}{N_{c}}\) 
and by Lemma~\ref{tildeFidentify} there is a homotopy equivalence 
$\widetilde{F}\simeq S^{1}\ast F\simeq\Sigma^{2} F$. 
By Lemma~\ref{ENc}, the composite 
\(\nameddright{N_{c}\times S^{1}}{}{\widetilde{E}}{}{N_{c}}\) 
is homotopic to the projection onto the first factor. In particular, the right map has a 
right homotopy inverse. This implies that, after looping in order to multiply, the 
homotopy fibration 
\(\nameddright{\Omega\Sigma^{2} F}{}{\Omega\widetilde{E}}{}{\Omega N_{c}}\) 
splits as 
$\Omega\widetilde{E}\simeq\Omega N_{c}\times\Omega\Sigma^{2} F$. 
\end{proof} 

Combining Lemma~\ref{canbundletriv} with Proposition~\ref{loopEtilde} we obtain a 
decomposition of the loops on the topological blow up. Theorem~\ref{loopNtilde} restates 
Theorem~\ref{loopNtildeintro}.

\begin{theorem} 
   \label{loopNtilde} 
   Suppose that there is a homotopy commutative diagram~(\ref{partialVaction}). Then 
   there is a homotopy equivalence 
   \[\Omega\widetilde{N}\simeq S^{1}\times\Omega N_{c}\times\Omega\Sigma^{2} F\] 
   where $F$ is the homotopy fibre of the inclusion 
   \(\namedright{\partial V}{}{N_{c}}\).~$\qqed$ 
\end{theorem}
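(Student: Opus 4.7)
The plan is to assemble the statement directly from the two key results established immediately before it, namely Lemma \ref{canbundletriv} and Proposition \ref{loopEtilde}. The first step is to invoke Lemma \ref{canbundletriv}, which produces a splitting
\[
\Omega\widetilde{N}\simeq S^{1}\times\Omega\widetilde{E}
\]
coming from the principal circle bundle $S^{1}\longrightarrow\widetilde{E}\longrightarrow\widetilde{N}$ of (\ref{S1tildeEeq}), using the section of $\Omega\widetilde{e}$ constructed in Lemma \ref{tildeelemma}. This step does not require the hypothesis (\ref{partialVaction}); it only uses the existence of a class $\widetilde{e}\in H^{2}(\widetilde{N})$ which restricts to a generator of $H^{2}(S^{2})$ through some map $S^{2}\longrightarrow\widetilde{N}$.

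The second and final step is to invoke Proposition \ref{loopEtilde}. Here is where the hypothesis (\ref{partialVaction}) enters: it guarantees, via two applications of Mather's Cube Lemma (first to identify $\widetilde{E}$ as the homotopy pushout of $\iota_{c}\times{\rm id}$ and $\pi_{1}$ in Lemma \ref{2tildeElemma}, then to identify the homotopy fibre of the resulting retraction $\widetilde{E}\longrightarrow N_{c}$ in Lemma \ref{ENc} and Lemma \ref{tildeFidentify}), that there is a split homotopy fibration
\[
\Sigma^{2}F\longrightarrow\widetilde{E}\longrightarrow N_{c}
\]
yielding $\Omega\widetilde{E}\simeq\Omega N_{c}\times\Omega\Sigma^{2}F$.

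Substituting the second homotopy equivalence into the first gives
\[
\Omega\widetilde{N}\simeq S^{1}\times\Omega N_{c}\times\Omega\Sigma^{2}F,
\]
which is the asserted decomposition. There is no real obstacle at this stage, since all of the substantive work (identifying $\widetilde{E}$ as a homotopy pushout under the hypothesis, recognising the fibre as the join $S^{1}\ast F\simeq\Sigma^{2}F$, and splitting off $\Omega N_{c}$ via the retraction) has already been carried out in the preceding lemmas and proposition. The theorem is therefore a formal consequence, and the proof reduces to citing these results in sequence.
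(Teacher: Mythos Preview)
Your proposal is correct and matches the paper's own approach exactly: the paper states just before the theorem that it follows by combining Lemma~\ref{canbundletriv} with Proposition~\ref{loopEtilde}, and the theorem itself is marked with a box and no further argument. Your summary of how the hypothesis~(\ref{partialVaction}) feeds through Lemmas~\ref{2tildeElemma}, \ref{ENc}, \ref{tildeFidentify} into Proposition~\ref{loopEtilde} is an accurate recap of the paper's chain of reasoning.
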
 

It remains to determine conditions for when~(\ref{partialVaction}) homotopy commutes. For this purpose, 
it is crucial to understand the homotopy action $\theta$. In the next section this action 
will be related to Whitehead products in the context of a principal fibration. For now, 
we end this section with an example when~(\ref{partialVaction}) homotopy commutes 
for other reasons.
 
As in Example~\ref{tdoubex}, let $N$ be a closed manifold that can be decomposed as a union
\[
N\cong D\nu\cup_g D\nu,
\] 
where $\nu$ is an oriented real vector bundle over a manifold $B$ and $g$ is a self diffeomorphism 
of the boundary $S\nu\cong \partial V$. The manifold $N$ is the {\it twisted double} of $D\nu\cong V$. 
\begin{corollary}\label{doublecoro}
For the topological blow up $\widetilde{N}$ of the twisted double $N$ of $V$, 
there is a homotopy equivalence
\[\Omega\widetilde{N}\simeq S^{1}\times\Omega B\times\Omega S^{2n+1}.\] 
\end{corollary}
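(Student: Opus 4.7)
The plan is to apply Theorem~\ref{loopNtilde} directly, so the crux is to verify its hypothesis, namely that the homotopy action $\theta\colon\partial V\times S^1\to\partial V$ associated with the homotopy fibration
\(\nameddright{\partial V}{q}{P\nu}{e}{\mathbb{C}P^{\infty}}\)
satisfies $\iota_c\circ\theta\simeq\iota_c\circ\pi_1$.

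First, I would exploit the geometry of the twisted double. Since $N\cong D\nu\cup_g D\nu$, the complement $N_c$ is diffeomorphic to a single copy of $D\nu$, and the inclusion of the boundary
\(\namedright{\partial V}{\iota_c}{N_c}\)
agrees (under this identification) with the inclusion
\(\namedright{\partial V=S\nu}{\iota_\nu}{D\nu}\)
of the boundary of the disk bundle. The zero section
\(\namedright{B}{s_0}{D\nu}\)
is a homotopy equivalence whose homotopy inverse is the bundle projection, so $\iota_c$ is homotopic to the composite
\(\nameddright{\partial V}{s_\nu}{B}{s_0}{N_c}\).
By~(\ref{SPnudiag}), $s_\nu=p_\nu\circ q$, hence
\[\iota_c\simeq s_0\circ p_\nu\circ q.\]

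Second, I would use this factorisation to verify the hypothesis of Theorem~\ref{loopNtilde}. As $\theta$ is the homotopy action arising from the principal homotopy fibration
\(\nameddright{\partial V}{q}{P\nu}{e}{\mathbb{C}P^{\infty}}\),
it sits in a homotopy pullback square with $q\circ\theta\simeq q\circ\pi_1$. Combining with the factorisation above gives
\[\iota_c\circ\theta\simeq s_0\circ p_\nu\circ q\circ\theta\simeq s_0\circ p_\nu\circ q\circ\pi_1\simeq\iota_c\circ\pi_1,\]
which is precisely the diagram~(\ref{partialVaction}).

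Finally, Theorem~\ref{loopNtilde} yields
\(\Omega\widetilde{N}\simeq S^1\times\Omega N_c\times\Omega\Sigma^2 F\),
and it remains to identify the two factors on the right. Since $N_c\simeq D\nu\simeq B$, we have $\Omega N_c\simeq\Omega B$. The homotopy fibre $F$ of $\iota_c$ coincides with the homotopy fibre of the sphere bundle projection $s_\nu\colon\partial V\to B$, which is $S^{2n-1}$ (because $\nu$ has complex dimension $n$, hence real rank $2n$). Thus $\Sigma^2 F\simeq S^{2n+1}$, giving the asserted homotopy equivalence
\[\Omega\widetilde{N}\simeq S^1\times\Omega B\times\Omega S^{2n+1}.\]
No genuine obstacle appears; the only subtle point is recognising that $\iota_c$ factors through $q$ up to homotopy in the twisted double case, which is what allows the action condition to be deduced from the universal identity $q\circ\theta\simeq q\circ\pi_1$.
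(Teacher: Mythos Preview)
Your proof is correct and follows essentially the same approach as the paper: you factor $\iota_c$ through $q$ via the identification $\iota_c\simeq s_0\circ s_\nu = s_0\circ p_\nu\circ q$ coming from the twisted double structure, then use the universal identity $q\circ\theta\simeq q\circ\pi_1$ for the homotopy action to verify~(\ref{partialVaction}), and finally apply Theorem~\ref{loopNtilde} together with $N_c\simeq B$ and $F\simeq S^{2n-1}$. The paper packages the same argument into a single commutative diagram, but the logic is identical.
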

\begin{proof}
For convenience, write $N\cong V_1\cup_g V_2$ with $V_1=V_2=V$. Suppose that the topological blow up $\widetilde{N}$ is constructed from the embedding $V_1{}{\hookrightarrow} N$. Then $N_c=V_2$ and the boundary inclusion $\partial V\stackrel{\iota_c}{\hookrightarrow} N_c$ becomes $\partial V_1=\partial V_2 \stackrel{\iota_c}{\hookrightarrow}V_2$ with homotopy fibre $F\simeq S^{2n-1}$.

Consider the diagram
\begin{equation}\label{twisteddoubledgrm} 
\begin{gathered}
\xymatrix{
\partial V_1\times S^1 \ar[r]^{\theta} \ar[d]^{\pi_1}  &
\partial V_1 \ar[d]^{q}   \ar[r]^{s_\nu}   &
B \ar@{=}[d]  \ar[r]^{s_0} &
V_2  \ar@{=}[d]  \\
\partial V_1 \ar[r]^{q}  &
P\nu  \ar[r]^{p_\nu}  &
B  \ar[r]^{s_0} &
V_2.  
}
\end{gathered}
\end{equation} 
The left square is a homotopy pullback by~(\ref{actiondgrm}), the middle square homotopy commutes by (\ref{SPnudiag}), and~$s_0$ is the zero section of $V_2$. For either copy of $V$, the projection 
\(\namedright{V}{p_{\nu}}{B}\) 
is a homotopy equivalence with the inclusion 
\(\namedright{B}{s_{0}}{V}\) 
as its homotopy inverse. Therefore, as $s_{\nu}$ is the composite 
\(\nameddright{\partial V_{1}}{\iota_c}{V_{1}}{p_{\nu}}{B}\), 
the composite $s_{0}\circ s_{\nu}$ is homotopic to $\iota_c$. The homotopy commutativity 
of~(\ref{twisteddoubledgrm}) implies that $s_{0}\circ s_{\nu}\simeq s_{0}\circ p_{\nu}\circ q$. 
Thus the outer perimeter of~(\ref{twisteddoubledgrm}) gives $\iota_c\circ\theta\simeq \iota_c\circ\pi_{1}$, 
implying that~(\ref{partialVaction}) homotopy commutes. Therefore by Theorem \ref{loopNtildeintro} there is a homotopy equivalence
 \[\Omega\widetilde{N}\simeq S^{1}\times\Omega V_2\times\Omega\Sigma^{2} S^{2n-1}\simeq S^{1}\times\Omega B\times\Omega S^{2n+1}.\] 
\end{proof}


\section{The homotopy action of a principal homotopy fibration}
\label{sec: act}
Suppose that $G$ is an $H$-group, that is, $G$ is a homotopy associative $H$-space 
with a homotopy inverse. The commutator on $G$ is the map 
\(c\colon\namedright{G\times G}{}{G}\) 
defined pointwise by 
$c(g,h)=ghg^{-1}h^{-1}$. Observe that $c$ is null homotopic when restricted to $G\vee G$, 
resulting in a quotient map 
\[\overline{c}\colon\namedright{G\wedge G}{}{G}.\] 
The homotopy class of $\overline{c}$ is uniquely determined by that of $c$ since the 
connecting map for the homotopy cofibration 
\(\nameddright{G\vee G}{}{G\times G}{}{G\wedge G}\) 
is null homotopic, implying that for any space~$Z$ there is an injection 
\(\namedright{[G\wedge G,Z]}{}{[G\times G,Z]}\). 
Given maps 
\(\bar{f}\colon\namedright{X}{}{G}\) 
and 
\(\bar{g}\colon\namedright{Y}{}{G}\), 
the \emph{Samelson product} of $\bar{f}$ and $\bar{g}$ is the composite 
\[\langle\bar{f},\bar{g}\rangle\colon\nameddright{X\wedge Y}{\bar{f}\wedge\bar{g}}{G\wedge G}{\overline{c}}{G}.\] 

Suppose that $G=\Omega Z$ and there are maps 
\(f\colon\namedright{\Sigma X}{}{Z}\) 
and 
\(g\colon\namedright{\Sigma Y}{}{Z}\). 
Let 
\(\bar{f}\colon\namedright{X}{}{\Omega Z}\) 
and 
\(\bar{g}\colon\namedright{Y}{}{\Omega Z}\) 
be the adjoints of $f$ and $g$ respectively. The \emph{Whitehead product} 
\[[f,g]\colon\namedright{\Sigma X\wedge Y}{}{Z}\] 
is the adjoint of the Samelson product $\langle\bar{f},\bar{g}\rangle$. 

Now suppose that there is a principal fibration 
\(\nameddright{\Omega Z}{\partial}{E}{p}{B}\) 
induced by a map 
\(\namedright{B}{\varphi}{Z}\). 
Let 
\(\theta\colon\namedright{\Omega Z\times E}{}{E}\) 
be the canonical homotopy action associated to the principal fibration and let $\vartheta$ 
be the composite 
\[\vartheta\colon\llnameddright{\Omega B\times E}{\Omega\varphi\times {\rm id}}{\Omega Z\times E}{\theta}{E}.\] 
Since the restriction of $\theta$ to $\Omega Z$ is $\partial$, the restriction of $\vartheta$ to $\Omega B$ 
is $\partial\circ\Omega\varphi$, which is null homotopic. Therefore $\vartheta$ factors as 
\(\nameddright{\Omega B\times E}{}{\Omega B\ltimes E}{\overline{\vartheta}}{E}\) 
for some map $\overline{\vartheta}$. There may be different choices of~$\overline{\vartheta}$. 
In~\cite[Proposition 2.9, Corollary 2.12]{BT2}, recorded here as Theorem~\ref{BTWhitehead}, 
it is shown that a choice can be made that behaves well with respect to Whitehead products. 

As general notation, for spaces $A$ and $B$, the \emph{right half-smash} of $A$ and $B$ is the quotient space 
\[A\rtimes B=(A\times B)/(\ast\times B).\] 
Let 
\(ev\colon\namedright{\Sigma\Omega X}{}{X}\) 
be the canonical evaluation map. If $A$ is a suspension then it is well known that there is a 
natural homotopy equivalence 
\[e\colon\namedright{(\Sigma B\wedge A)\vee\Sigma A}{}{B\ltimes\Sigma A}.\] 

\begin{theorem} 
   \label{BTWhitehead}  
   Let 
   \(\nameddright{\Omega Z}{}{E}{p}{B}\) 
   be a principal fibration induced by a map 
   \(\namedright{B}{\varphi}{Z}\).  
   Let 
   \(f\colon\namedright{\Sigma X}{}{B}\) 
   and 
   \(g\colon\namedright{\Sigma Y}{}{E}\) 
   be maps. Then there is a choice of $\overline{\vartheta}$ with the property that there 
   is a homotopy commutative diagram 
   \[\diagram 
         \Omega\Sigma X\ltimes\Sigma Y\rto^-{\Omega f\ltimes g}\dto^{e^{-1}} 
             &  \Omega B\ltimes E\rto^-{\overline{\vartheta}} & E\dto^{p} \\ 
         (\Sigma\Omega\Sigma X\wedge Y)\vee\Sigma Y\rrto^-{[f\circ ev,p\circ g]+p\circ g} & & B. 
     \enddiagram\] 
\end{theorem}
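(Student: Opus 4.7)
The plan is to construct a preferred $\overline{\vartheta}$ from the principal fibration structure and then compute its composite with $p$ on each wedge summand after applying $e^{-1}$.

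First, since $\theta$ restricts to $\partial$ on $\Omega Z\times \ast_{E}$ and to the identity on $\ast_{\Omega Z}\times E$, the map $\vartheta=\theta\circ(\Omega\varphi\times \mathrm{id})$ restricts to $\partial\circ\Omega\varphi$ on $\Omega B\times \ast_{E}$ and to the identity on $\ast_{\Omega B}\times E$. The principal structure provides a canonical null-homotopy of $\partial\circ\Omega\varphi$, arising as two consecutive terms in the Puppe sequence $\Omega B\to\Omega Z\to E\to B$ and produced explicitly from the path-space description of $E$. This canonical choice determines a specific $\overline{\vartheta}\colon \Omega B\ltimes E\to E$, and composing with $p$ yields a preferred map $p\circ\overline{\vartheta}\colon \Omega B\ltimes E\to B$.

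Next, I would use the splitting $e\colon (\Sigma\Omega\Sigma X\wedge Y)\vee\Sigma Y\xrightarrow{\simeq}\Omega\Sigma X\ltimes\Sigma Y$, valid because $\Sigma Y$ is a suspension, and examine $p\circ\overline{\vartheta}\circ(\Omega f\ltimes g)$ on each summand. On the $\Sigma Y$ summand, the map $\Omega f\ltimes g$ collapses the loop factor to the basepoint of $\Omega B$, so $\overline{\vartheta}$ acts as $\mathrm{id}_{E}$ and the composite is $p\circ g$, matching the second summand of the target map. On the smash summand $\Sigma\Omega\Sigma X\wedge Y$, one must identify the composite with the Whitehead product $[f\circ ev, p\circ g]$. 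The essential input here is the classical correspondence between Samelson products in $\Omega Z$ and Whitehead products in $Z$: on the smash quotient $\Omega B\wedge E$, the map $\overline{\vartheta}$ restricts, via the chosen null-homotopy, to the composite of the commutator pairing $\overline{c}\colon \Omega Z\wedge\Omega Z\to\Omega Z$ applied to $\Omega\varphi$ and a canonical lift of $g$ through $\partial$. After composing with $p$ and passing to adjoints, this becomes the Whitehead bracket, with the evaluation map $ev\colon \Sigma\Omega\Sigma X\to\Sigma X$ appearing from the double adjunction relating maps out of $\Omega\Sigma X\wedge Y$ to maps out of $\Sigma(X\wedge Y)$.

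The main obstacle is the smash-summand identification. This requires carefully unwinding the canonical null-homotopy defining $\overline{\vartheta}$, tracking it through the splitting $e$, and matching the result with the formula expressing a Whitehead product as the adjoint of the corresponding Samelson product, while simultaneously controlling the orientation and sign. Once both wedge summands are verified, the desired homotopy commutativity follows from the universal property of wedges, giving $p\circ\overline{\vartheta}\circ(\Omega f\ltimes g)\circ e\simeq [f\circ ev, p\circ g] + p\circ g$.
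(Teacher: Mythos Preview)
The paper does not prove this theorem; it is quoted from \cite[Proposition~2.9, Corollary~2.12]{BT2} and stated without argument (note the $\Box$ immediately after the statement). So there is no in-paper proof to compare against.

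On its own merits, your outline has the right large-scale shape: build $\overline{\vartheta}$ from a canonical null-homotopy of $\partial\circ\Omega\varphi$, split $\Omega\Sigma X\ltimes\Sigma Y$ via $e$, and verify $p\circ\overline{\vartheta}\circ(\Omega f\ltimes g)$ on each wedge summand. The $\Sigma Y$ summand is indeed routine. The smash summand, however, carries all the content, and your description there is imprecise in ways that matter. You invoke ``a canonical lift of $g$ through $\partial$,'' but $g\colon\Sigma Y\to E$ need not factor through $\partial\colon\Omega Z\to E$ at all, so no such lift is available in general. You also place the commutator pairing in $\Omega Z$, whereas the target Whitehead product $[f\circ ev,\,p\circ g]$ is a map into $B$; the relevant Samelson product lives in $\Omega B$, and the passage from the action to the bracket must take place there. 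The argument in \cite{BT2} works with an explicit path-space model and tracks the chosen null-homotopy through that model to exhibit the Samelson commutator concretely; your sketch gestures at this mechanism but does not supply it, and the misidentifications above suggest the missing details are not merely cosmetic.
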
 
\vspace{-1cm}~$\qqed$\bigskip 

A refined version of Theorem~\ref{BTWhitehead} appears in~\cite[Theorem 3.4]{BT1} 
under the additional assumption that $\Omega\varphi$ has a right homotopy inverse. The hypotheses 
in that paper are not quite relevant to our situation so we state and prove the version that is needed here. 

\begin{proposition} 
   \label{HTWhitehead} 
   Let 
   \(\nameddright{\Omega Z}{}{E}{p}{B}\) 
   be a principal fibration induced by a map 
   \(\namedright{B}{\varphi}{Z}\) 
   and suppose that $\Omega\varphi$ has a right homotopy inverse 
   \(s\colon\namedright{\Omega Z}{}{\Omega B}\). 
   Let 
   \(g\colon\namedright{\Sigma Y}{}{E}\) 
   be a map. Then there is a homotopy commutative diagram 
   \[\diagram 
        \Omega Z\ltimes\Sigma Y\rto^-{s\ltimes g}\dto^{e^{-1}} & \Omega B\ltimes E\rto^-{\overline{\vartheta}} 
             & E\dto^{p} \\ 
        (\Sigma\Omega Z\wedge Y)\vee\Sigma Y\rrto^-{[\gamma,p\circ g]+p\circ g} & & B  
     \enddiagram\] 
   where $\gamma$ is the composite 
   \(\nameddright{\Sigma\Omega Z}{\Sigma s}{\Sigma\Omega B}{ev}{B}\). 
\end{proposition}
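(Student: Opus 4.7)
The plan is to obtain Proposition \ref{HTWhitehead} as a specialization of Theorem \ref{BTWhitehead} applied to the map $f = \gamma \colon \Sigma \Omega Z \to B$. The crucial observation is that, under the suspension--loop adjunction $\Sigma \dashv \Omega$, the map $s$ is precisely the adjoint of $\gamma$: letting $\eta \colon X \to \Omega \Sigma X$ denote the unit of the adjunction, the definition $\gamma = ev \circ \Sigma s$ combined with naturality of $\eta$ and the triangle identity $\Omega\, ev \circ \eta_{\Omega B} = \mathrm{id}$ gives
\[\Omega \gamma \circ \eta_{\Omega Z} = \Omega\, ev \circ \Omega \Sigma s \circ \eta_{\Omega Z} = \Omega\, ev \circ \eta_{\Omega B} \circ s = s.\]
In particular, $s \ltimes g$ factors as $(\Omega \gamma \ltimes g) \circ (\eta_{\Omega Z} \ltimes \mathrm{id}_{\Sigma Y})$.

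Second, I would apply Theorem \ref{BTWhitehead} with $X = \Omega Z$ and $f = \gamma$; this supplies a homotopy commutative diagram with top row $\Omega \gamma \ltimes g$ and bottom row $[\gamma \circ ev_{\Sigma \Omega Z}, p \circ g] + p \circ g$, defined on $(\Sigma \Omega \Sigma \Omega Z \wedge Y) \vee \Sigma Y$. Precomposing the top row with $\eta_{\Omega Z} \ltimes \mathrm{id}_{\Sigma Y}$, and using the naturality of the splitting $e$ in its left factor, namely $(\eta_{\Omega Z} \ltimes \mathrm{id}_{\Sigma Y}) \circ e_{\Omega Z, Y} \simeq e_{\Omega \Sigma \Omega Z, Y} \circ \bigl((\Sigma \eta_{\Omega Z} \wedge \mathrm{id}_Y) \vee \mathrm{id}_{\Sigma Y}\bigr)$, produces a homotopy commutative diagram whose bottom row restricts to $p \circ g$ on the $\Sigma Y$ summand and to $[\gamma \circ ev_{\Sigma \Omega Z}, p \circ g] \circ (\Sigma \eta_{\Omega Z} \wedge \mathrm{id}_Y)$ on the $\Sigma \Omega Z \wedge Y$ summand.

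Finally, to collapse the second contribution I would invoke the identity $[\alpha \circ \Sigma a, \beta] \simeq [\alpha, \beta] \circ (\Sigma a \wedge \mathrm{id})$ for Whitehead products, which is the adjoint of the evident naturality of the Samelson product in its left factor. Applied with $a = \eta_{\Omega Z}$, this rewrites the precomposition as $[\gamma \circ ev_{\Sigma \Omega Z} \circ \Sigma \eta_{\Omega Z}, p \circ g]$, and the triangle identity $ev_{\Sigma \Omega Z} \circ \Sigma \eta_{\Omega Z} = \mathrm{id}_{\Sigma \Omega Z}$ reduces this to $[\gamma, p \circ g]$, completing the argument. I expect the main technical point to be the bookkeeping of the two naturality statements---the naturality of the half-smash splitting $e$ in its left factor and the naturality of Samelson products---since, once these are in hand, everything follows mechanically from the triangle identities of the suspension--loop adjunction.
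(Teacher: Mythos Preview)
Your proposal is correct and follows essentially the same approach as the paper's proof: both apply Theorem~\ref{BTWhitehead} with $X=\Omega Z$ and $f=\gamma$, precompose along the unit $\eta_{\Omega Z}$ (written $E$ in the paper), use naturality of the splitting $e$ and of the Whitehead product, and collapse via the triangle identity $ev\circ\Sigma\eta\simeq\mathrm{id}$. The only differences are notational and in the order of exposition.
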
    

\begin{proof} 
In general, let 
\(E\colon\namedright{X}{}{\Omega\Sigma X}\) 
be the suspension map that is adjoint to the identity map on~$\Sigma X$. Consider the diagram 
\[\diagram 
     \Omega Z\ltimes\Sigma Y\rrto^-{E\ltimes 1}\dto^{e^{-1}} 
          & & \Omega\Sigma\Omega Z\ltimes\Sigma Y\rto^-{\Omega\gamma\ltimes g}\dto^{e^{-1}} 
          & \Omega B\ltimes E\rto^-{\overline{\vartheta}} & E\dto^{p} \\ 
     (\Sigma\Omega Z\wedge Y)\vee\Sigma Y\rrto^-{(\Sigma E\wedge 1)\vee 1}
          & & (\Sigma\Omega\Sigma\Omega Z\wedge Y)\vee\Sigma Y\rrto^-{[\gamma\circ ev,p\circ g]+p\circ g} 
          & & B. 
  \enddiagram\] 
The left square homotopy commutes by the naturality of the homotopy equivalence $e$. 
The right rectangle homotopy commutes by Theorem~\ref{BTWhitehead} with $f=\gamma$. 
Along the bottom row, the naturality of the Whitehead product implies that 
$[\gamma\circ ev,p\circ g]\circ(\Sigma E\wedge 1)\simeq [\gamma\circ ev\circ\Sigma E,p\circ g]$. 
Since $ev$ is a left homotopy inverse for $\Sigma E$, we have $\gamma\circ ev\circ\Sigma E\simeq\gamma$. 
Thus the bottom row of the diagram is homotopic to $[\gamma,p\circ g]+p\circ g$. Therefore 
the homotopy commutativity of the diagram implies that 
$p\circ\overline{\vartheta}\circ(\Omega\gamma\ltimes g)\circ(E\ltimes 1)\simeq ([\gamma,p\circ g]+p\circ g)\circ e^{-1}$. 

Next consider the diagram   
\[\diagram 
     \Omega Z\ltimes\Sigma Y\rto^-{E\ltimes 1}\dto^{s\ltimes 1} 
          & \Omega\Sigma\Omega Z\ltimes\Sigma Y\rto^-{\Omega\gamma\ltimes g}\dto^{\Omega\Sigma s\ltimes 1} 
          & \Omega B\ltimes E \\ 
     \Omega B\ltimes\Sigma Y\rto^-{E\ltimes 1} 
          & \Omega\Sigma\Omega B\ltimes\Sigma Y\urto_-{\Omega ev\ltimes g} & 
  \enddiagram\] 
The left square homotopy commutes by the naturality of $E$ and the right triangle commutes by 
definition of $\gamma$. Since $\Omega ev$ is a left homotopy inverse for $E$, the homotopy 
commutativity of the diagram implies that 
$(\Omega\gamma\ltimes g)\circ(E\ltimes 1)\simeq(s\ltimes g)$. Thus we obtain 
$p\circ\overline{\vartheta}\circ(\Omega\gamma\ltimes g)\circ(E\ltimes 1)\simeq 
     p\circ\overline{\vartheta}\circ(s\ltimes g)$. 
     
The two homotopies for $p\circ\overline{\vartheta}\circ(\Omega\gamma\ltimes g)\circ(E\ltimes 1)$ 
therefore give $p\circ\overline{\vartheta}\circ(s\ltimes g)\simeq ([\gamma,p\circ g]+p\circ g)\circ e^{-1}$, 
as asserted. 
\end{proof} 

There is a finer articulation of Proposition~\ref{HTWhitehead}. Since $\Omega\varphi$ has a 
right homotopy inverse, the fibration connecting map 
\(\namedright{\Omega Z}{}{E}\) 
is null homotopic, so the homotopy action 
\(\namedright{\Omega Z\times E}{\theta}{E}\) 
factors through a map 
\(\namedright{\Omega Z\ltimes E}{}{E}\). 
There is a preferred choice for this factorization. Consider the diagram 
\[\diagram 
      \Omega Z\times E\rto^-{s\ltimes 1}\dto & \Omega B\times E\rto^-{\Omega\varphi\times {\rm id}}\dto 
          & \Omega Z\times E\rto^-{\theta} & E \\ 
      \Omega Z\ltimes E\rto^-{s\ltimes 1} & \Omega B\ltimes E.\urrto_-{\overline{\vartheta}} & & 
  \enddiagram\] 
The right square commutes by the naturality of the quotient map to the half-smash and the right 
triangle homotopy commutes by definition of $\overline{\vartheta}$. Since $s$ is a right homotopy 
inverse for $\Omega\varphi$, the top row is homotopic to $\theta$. Thus if $\overline{\theta}$ is defined 
as the composite 
\[\overline{\theta}\colon\nameddright{\Omega Z\ltimes E}{s\ltimes 1}{\Omega B\ltimes E} 
      {\overline{\vartheta}}{E}\] 
then there is a homotopy commutative diagram 
\[\diagram 
      \Omega Z\times E\rto^-{\theta}\dto & E \\ 
      \Omega Z\ltimes E.\urto_-{\overline{\theta}} & 
  \enddiagram\] 
Moreover, substituting the definition of $\overline{\theta}$ into Proposition~\ref{HTWhitehead} 
immediately gives the following reformulation in terms of the action of $\Omega Z$ on $E$. 

\begin{proposition} 
   \label{HTWhitehead2} 
   Let 
   \(\nameddright{\Omega Z}{}{E}{p}{B}\) 
   be a principal fibration induced by a map 
   \(\namedright{B}{\varphi}{Z}\) 
   and suppose that $\Omega\varphi$ has a right homotopy inverse 
   \(s\colon\namedright{\Omega Z}{}{\Omega B}\). 
   Let 
   \(g\colon\namedright{\Sigma Y}{}{E}\) 
   be a map. Then there is a homotopy commutative diagram 
   \[\diagram 
        \Omega Z\ltimes\Sigma Y\rto^{1\ltimes g}\dto^{e^{-1}} 
             & \Omega Z\ltimes E\rto^-{\overline{\theta}} & E\dto^{p} \\ 
        (\Sigma\Omega Z\wedge Y)\vee\Sigma Y\rrto^-{[\gamma,p\circ g]+p\circ g} & & B  
     \enddiagram\] 
   where $\gamma$ is the composite 
   \(\nameddright{\Sigma\Omega Z}{\Sigma s}{\Sigma\Omega B}{ev}{B}\).~$\qqed$  
\end{proposition}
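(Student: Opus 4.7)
The plan is to derive Proposition~\ref{HTWhitehead2} as a direct reformulation of Proposition~\ref{HTWhitehead}, using the explicit factorization of $\overline{\theta}$ established in the paragraph immediately preceding the statement. By construction there, $\overline{\theta}$ equals the composite $\nameddright{\Omega Z\ltimes E}{s\ltimes 1}{\Omega B\ltimes E}{\overline{\vartheta}}{E}$, so the entire content of the proposition will follow once this factorization is substituted into the conclusion of Proposition~\ref{HTWhitehead}.

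First I would rewrite the top row of the target diagram. The composite $\overline{\theta}\circ(1\ltimes g)$ unfolds as $\overline{\vartheta}\circ(s\ltimes 1)\circ(1\ltimes g)$. By the functoriality of the half-smash construction, $(s\ltimes 1)\circ(1\ltimes g) \simeq s\ltimes g$. Hence $\overline{\theta}\circ(1\ltimes g) \simeq \overline{\vartheta}\circ(s\ltimes g)$, which identifies the top row of the diagram in Proposition~\ref{HTWhitehead2} with the top row of the diagram in Proposition~\ref{HTWhitehead}.

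Next I would invoke Proposition~\ref{HTWhitehead} applied to the very same map $g\colon\namedright{\Sigma Y}{}{E}$. That proposition directly asserts
\[p\circ\overline{\vartheta}\circ(s\ltimes g)\;\simeq\;\bigl([\gamma,p\circ g]+p\circ g\bigr)\circ e^{-1},\]
with $\gamma$ the composite $ev\circ\Sigma s$, which matches the lower row of the required diagram verbatim. Combining this with the identification from the previous step yields $p\circ\overline{\theta}\circ(1\ltimes g)\simeq([\gamma,p\circ g]+p\circ g)\circ e^{-1}$, i.e.\ the homotopy commutativity of the asserted diagram.

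There is no genuine obstacle here; the only subtlety worth recording is the functoriality identity $(s\ltimes 1)\circ(1\ltimes g)=s\ltimes g$, which is immediate from the definition $A\rtimes B=(A\times B)/(\ast\times B)$ since both sides act as $(a,\sigma y)\mapsto (s(a),g(\sigma y))$ on representatives. Thus the proposition is essentially a naturality repackaging, and no additional homotopy-theoretic input beyond Proposition~\ref{HTWhitehead} and the construction of $\overline{\theta}$ is required.
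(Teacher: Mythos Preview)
Your proposal is correct and takes essentially the same approach as the paper: the paper itself simply says that substituting the definition of $\overline{\theta}$ into Proposition~\ref{HTWhitehead} immediately gives the result, and you have spelled out that substitution, including the functoriality identity $(s\ltimes 1)\circ(1\ltimes g)=s\ltimes g$.
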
   

The special case when $E$ is a suspension is worth singling out. 

\begin{corollary} 
   \label{HTWhiteheadcor} 
   Let 
   \(\nameddright{\Omega Z}{}{\Sigma E}{p}{B}\) 
   be a principal fibration induced by a map 
   \(\namedright{B}{\varphi}{Z}\) 
   and suppose that $\Omega\varphi$ has a right homotopy inverse 
   \(s\colon\namedright{\Omega Z}{}{\Omega B}\). 
   Then there is a homotopy commutative diagram 
   \[\diagram 
        \Omega Z\ltimes\Sigma E\rto^{\overline{\theta}}\dto^{e^{-1}} & \Sigma E\dto^{p} \\ 
        (\Sigma\Omega Z\wedge E)\vee\Sigma E\rto^-{[\gamma,p]+p} & B  
     \enddiagram\] 
   where $\gamma$ is the composite 
   \(\nameddright{\Sigma\Omega Z}{\Sigma s}{\Sigma\Omega B}{ev}{B}\).  
\end{corollary}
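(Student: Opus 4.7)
The corollary follows as an immediate specialization of Proposition \ref{HTWhitehead2}. The plan is to apply that proposition to the given principal fibration $\Omega Z \to \Sigma E \to B$ itself, taking the role of ``$\Sigma Y$'' in the statement of Proposition \ref{HTWhitehead2} to be $\Sigma E$ (so $Y = E$) and the map $g$ to be the identity map $\mathrm{id}_{\Sigma E} \colon \Sigma E \to \Sigma E$. Since $\Sigma E$ is a suspension, this is a valid choice of domain for $g$, and the hypothesis that $\Omega \varphi$ has a right homotopy inverse $s$ is exactly what is assumed in the corollary. Thus both hypotheses of Proposition \ref{HTWhitehead2} are satisfied.

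With $g = \mathrm{id}_{\Sigma E}$, the map $1 \ltimes g$ appearing in the top row of the diagram in Proposition \ref{HTWhitehead2} reduces to the identity on $\Omega Z \ltimes \Sigma E$, so the top composite collapses to $\overline{\theta}$ alone. Likewise, the factor $p \circ g$ appearing in the bottom Whitehead product simplifies to $p$, so the bottom map becomes $[\gamma, p] + p$, with $\gamma$ the composite $\Sigma \Omega Z \to \Sigma \Omega B \to B$ built from $\Sigma s$ and the evaluation map, exactly as in the corollary. Substituting these simplifications into the homotopy commutative diagram provided by Proposition \ref{HTWhitehead2} yields precisely the diagram asserted by Corollary \ref{HTWhiteheadcor}. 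No obstacle arises, since all of the real work (construction of $\overline{\vartheta}$, its compatibility with Whitehead products, and the replacement of $\overline{\vartheta}$ by $\overline{\theta}$ via the right inverse $s$) was already carried out in the lead-up to Proposition \ref{HTWhitehead2}; the corollary simply records the convenient special case in which the total space of the principal fibration is itself a suspension.
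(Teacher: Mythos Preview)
Your proof is correct and follows exactly the same approach as the paper: apply Proposition~\ref{HTWhitehead2} with $\Sigma Y=\Sigma E$ and $g$ the identity map, then observe that $1\ltimes g$ becomes the identity and $p\circ g$ becomes $p$.
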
  

\begin{proof} 
Take $\Sigma Y=\Sigma E$ and $g$ equal to the identity map in Proposition~\ref{HTWhitehead2}. 
\end{proof} 

These results are now applied to analyze the homotopy action
\(\namedright{\partial V\times S^{1}}{\theta}{\partial V}\) 
that appeared in Diagram (\ref{partialVaction}) of Theorem~\ref{loopNtilde} for topological blow ups. Consider 
the principal fibration 
\[\nameddright{S^{1}}{\delta}{\partial V}{q}{P\nu}\] 
induced by the map 
\(\namedright{P\nu}{e}{\mathbb{C}P^{\infty}}\). 

\begin{lemma} 
   \label{deltatriv} 
   The map 
   \(\namedright{\Omega P\nu}{\Omega e}{S^{1}}\) 
   has a right homotopy inverse. 
\end{lemma}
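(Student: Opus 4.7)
The plan is to exploit Lemma~\ref{esphere} by passing to adjoints. Recall that $\mathbb{C}P^{\infty}\simeq K(\mathbb{Z},2)$, so there is a canonical homotopy equivalence $\Omega\mathbb{C}P^{\infty}\simeq S^{1}$. After making this identification, the looped map
$$\Omega e\colon \Omega P\nu\longrightarrow \Omega\mathbb{C}P^{\infty}\simeq S^{1}$$
is the map whose target is the $S^{1}$ appearing in the principal fibration $S^{1}\stackrel{\delta}{\longrightarrow}\partial V\stackrel{q}{\longrightarrow}P\nu$.

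First, I would apply Lemma~\ref{esphere} to obtain a map $a\colon S^{2}\to P\nu$ with the property that the composite $e\circ a\colon S^{2}\to\mathbb{C}P^{\infty}$ induces an isomorphism on $H^{2}$. Let $\bar{a}\colon S^{1}\to\Omega P\nu$ be the adjoint of $a$. Then by naturality of the adjoint construction, the composite
$$\namedddright{S^{1}}{\bar{a}}{\Omega P\nu}{\Omega e}{\Omega\mathbb{C}P^{\infty}}{\simeq}{S^{1}}$$
is the adjoint of $e\circ a$.

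Since $e\circ a$ induces an isomorphism on $H^{2}(-;\mathbb{Z})$, its adjoint induces an isomorphism on $H^{1}(S^{1};\mathbb{Z})$, so it is a self-map of $S^{1}$ of degree $\pm 1$. Such a map is a homotopy equivalence (up to composition with the degree $-1$ map, which is itself a homotopy equivalence). Consequently, taking $s=\bar{a}$, or $s=\bar{a}$ composed with the inverse of this self-equivalence, produces a map $s\colon S^{1}\to\Omega P\nu$ with $\Omega e\circ s$ homotopic to the identity on $S^{1}$. This exhibits $s$ as a right homotopy inverse of $\Omega e$.

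There is no real obstacle here: once the correct identification $\Omega\mathbb{C}P^{\infty}\simeq S^{1}$ is made, everything follows formally from Lemma~\ref{esphere} together with the standard fact that a self-map of $S^{1}$ inducing an isomorphism on $H^{1}$ is a homotopy equivalence. The only minor point requiring attention is ensuring the identification of the target $S^{1}$ of $\Omega e$ with the fibre $S^{1}$ in the principal fibration $\nameddright{S^{1}}{\delta}{\partial V}{q}{P\nu}$ induced by $e$, but this is precisely the canonical identification used throughout the construction of the fibration.
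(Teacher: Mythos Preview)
Your proof is correct and is essentially the same as the paper's: both invoke Lemma~\ref{esphere}, take the adjoint $S^{1}\to\Omega P\nu$ of the resulting map $S^{2}\to P\nu$ (the paper writes this as the composite $S^{1}\stackrel{E}{\to}\Omega S^{2}\to\Omega P\nu$, which is the same thing), and then observe that composing with $\Omega e$ gives a degree~$\pm 1$ self-map of $S^{1}$.
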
 

\begin{proof} 
By Lemma~\ref{esphere}, there is a map 
\(\namedright{S^{2}}{}{P\nu}\) 
with the property that the composite 
\(\nameddright{S^{2}}{}{P\nu}{e}{\mathbb{C}P^{\infty}}\) 
induces an isomorphism in degree $2$ cohomology. If 
\(\namedright{S^{1}}{E}{\Omega S^{2}}\) 
is the suspension (equivalent to the inclusion of the bottom cell), then the composite 
\(\namedddright{S^{1}}{E}{\Omega S^{2}}{}{\Omega P\nu}{\Omega e}{S^{1}}\) 
induces an isomorphism in cohomology and so is a homotopy equivalence. 
Thus $\Omega e$ has a right homotopy inverse.  
\end{proof} 

Lemma~\ref{deltatriv} lets us apply Proposition~\ref{HTWhitehead2} to 
immediately obtain the following.

\begin{lemma} 
   \label{Pnuexamplelem} 
   Consider the principal fibration  
   \(\nameddright{S^{1}}{}{\partial V}{q}{P\nu}\) 
   induced by the map 
   \(\namedright{P\nu}{e}{\mathbb{C}P^{\infty}}\). 
   For any map 
   \(g\colon\namedright{\Sigma Y}{}{\partial V}\) 
   there is a homotopy commutative diagram 
   \[\diagram 
        S^{1}\ltimes\Sigma Y\rto^{1\ltimes g}\dto^{e^{-1}} 
             &S^{1}\ltimes\partial V\rto^-{\overline{\theta}} & \partial V\dto^{p} \\ 
        (\Sigma S^{1}\wedge Y)\vee\Sigma Y\rrto^-{[\gamma,p\circ g]+p\circ g} & & P\nu  
     \enddiagram\]  
   where $\gamma$ is the composite 
   \(\nameddright{\Sigma S^{1}}{\Sigma s}{\Sigma\Omega P\nu}{ev}{P\nu}\).~$\qqed$  
\end{lemma}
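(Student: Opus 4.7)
The plan is to deduce Lemma \ref{Pnuexamplelem} as an immediate specialization of Proposition \ref{HTWhitehead2}. In the notation of that proposition, I would take $Z = \mathbb{C}P^\infty$, $B = P\nu$, $E = \partial V$, and $\varphi = e$. Under the standard homotopy equivalence $\Omega \mathbb{C}P^\infty \simeq S^1$, the principal fibration \(\nameddright{\Omega \mathbb{C}P^\infty}{}{\partial V}{}{P\nu}\) induced by $e$ becomes exactly \(\nameddright{S^1}{}{\partial V}{q}{P\nu}\), as recorded in~(\ref{ecfibrations}). The role of the projection $p$ in Proposition \ref{HTWhitehead2} is played by $q$, and the associated half-smash action $\overline{\theta}$ agrees with the one in the statement of Lemma \ref{Pnuexamplelem}.

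The sole hypothesis of Proposition \ref{HTWhitehead2} requiring verification is the existence of a right homotopy inverse for $\Omega\varphi = \Omega e \colon \Omega P\nu \to \Omega \mathbb{C}P^\infty \simeq S^1$. This is precisely the content of Lemma \ref{deltatriv}, which produces such a section $s\colon S^1 \to \Omega P\nu$ by looping the map $S^2 \to P\nu$ supplied by Lemma \ref{esphere}. Feeding this $s$ together with an arbitrary map $g\colon \Sigma Y \to \partial V$ into the conclusion of Proposition \ref{HTWhitehead2} produces, verbatim, the homotopy commutative diagram claimed in Lemma \ref{Pnuexamplelem}, with $\gamma$ identified as the composite \(\nameddright{\Sigma S^{1}}{\Sigma s}{\Sigma\Omega P\nu}{ev}{P\nu}\).

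No substantive obstacle arises. The only minor bookkeeping point is the coherent identification $\Omega\mathbb{C}P^\infty \simeq S^1$ so that the source $\Omega Z \ltimes \Sigma Y$ in Proposition \ref{HTWhitehead2} matches the source $S^1 \ltimes \Sigma Y$ here, but this is already implicit in the construction of the circle bundle \(\nameddright{S^1}{}{\partial V}{q}{P\nu}\) via~(\ref{ecfibrations}). Thus the proof is essentially a one-line citation of Proposition \ref{HTWhitehead2}, using Lemma \ref{deltatriv} to check its hypothesis.
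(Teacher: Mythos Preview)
Your proposal is correct and matches the paper's own argument exactly: the paper simply remarks that Lemma~\ref{deltatriv} allows one to apply Proposition~\ref{HTWhitehead2} directly, and records Lemma~\ref{Pnuexamplelem} as an immediate consequence with no further proof.
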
 

\begin{remark} 
\label{Pnuexampleremark}
The map $\gamma$ in Lemma~\ref{Pnuexamplelem}, or its ocurrance in Corollary~\ref{HTWhiteheadcor} 
when $\Sigma E=\partial V$, can sometimes be described more simply. 
Consider the diagram 
\[\diagram 
     \Sigma S^{1}\rto^-{\Sigma s}\drdouble & \Sigma\Omega P\nu\rto^-{ev}\dto^{\Sigma\Omega e} & P\nu\dto^{e} \\ 
     & \Sigma\Omega\mathbb{C}P^{\infty}\rto^-{ev} & \mathbb{C}P^{\infty}. 
  \enddiagram\] 
The left triangle homotopy commutes since $s$ is a right homotopy inverse for $\Omega e$ and 
the right square homotopy commutes by the naturality of the evaluation map. The lower direction 
around the diagram is therefore homotopic to the inclusion of the bottom cell. The top row is the 
definition of $\gamma$, so $\gamma$ is a lift of the inclusion 
\(\namedright{S^{2}}{}{\mathbb{C}P^{\infty}}\). 
Consequently, for the homotopy fibration 
\(\nameddright{\partial V}{}{P\nu}{e}{\mathbb{C}P^{\infty}}\), 
if $\partial V$ is $2$-connnected then $e$ induces an isomorphism on $\pi_{2}$, 
implying that $\gamma$ is homotopic to the inclusion of the bottom cell into $P\nu$. 
\end{remark}

The special case of a blow up at a point will be discussed in more detail in the next section.


\section{The homotopy of a stabilized manifold revisited}
\label{sec: stab}
In \cite{HT2} we studied the homotopy theory of a stabilized manifold such as $N \# \mathbb{C}P^n$ or $N\# \mathbb{H}P^{\frac{n}{2}}$, which can be viewed as the complex or quaternionic blow up of $N$ at a point. 
One may refer to \cite{GGS} for more details on the quaternionic blow up at a point.
In this section, we will apply the results in Section \ref{sec: act} to recover the results of \cite{HT2} by a purely homotopy theoretic method, with the bonus of a slight improvement.

Let us consider the complex case first. Recall as in Example~\ref{blowuppointex}, the geometric and topological blow ups at a point coincide, and for $n\geq 2$ the defining pushout of the blow up at a point $\widetilde{N}:=N \# \mathbb{C}P^n$ reduces to a pushout 
\[\diagram 
     S^{2n-1}\rto^-{q_{0}}\dto^{i_{c}} & \mathbb{C}P^{n-1}\dto \\ 
     N_{0}\rto & \widetilde{N}, 
  \enddiagram\] 
  where $N_0$ is the manifold $N$ with one open disk removed and $q_0$ is the standard projection.
In this case the principal fibration 
\(\nameddright{S^{1}}{}{\partial V}{q}{P\nu}\)
of the blow up $\widetilde{N}$ is 
\(\nameddright{S^{1}}{}{S^{2n-1}}{q_0}{\mathbb{C}P^{n-1}}\). 
Lemma~\ref{Pnuexamplelem} therefore implies that there is a homotopy commutative diagram 
\begin{equation}
\label{*actwheq}
\diagram 
     S^{1}\ltimes S^{2n-1}\rrto^{\overline{\theta}}\dto^{e^{-1}} & & S^{2n-1}\dto^{q_0} \\ 
     (S^{1}\wedge S^{2n-1})\vee S^{2n-1}\rrto^-{[\epsilon,q_0]+q_0} & & \mathbb{C}P^{n-1}  
  \enddiagram
  \end{equation}
where $\epsilon$ is the inclusion of the bottom cell by Remark~\ref{Pnuexampleremark}.

In~\cite{BJS} the Whitehead product $[\epsilon,q_0]$ was described explicitly. If $n$ is even 
then $[\epsilon,q_0]$ is null homotopic. If $n$ is odd then $[\epsilon,q_0]$ is nontrivial and 
homotopic to the composite 
\(\nameddright{S^{2n}}{\eta}{S^{2n-1}}{q_0}{\mathbb{C}P^{n-1}}\), 
where $\eta$ represents a generator of $\pi_{2n}(S^{2n-1})\cong\mathbb{Z}/2\mathbb{Z}$ for $n\geq 2$. 
Since 
\(\namedright{S^{2n-1}}{q_{0}}{\mathbb{C}P^{n-1}}\) 
induces an isomorphism on $\pi_{2n}$ and $\pi_{2n-1}$, the diagram~(\ref{*actwheq}) implies the following. 

\begin{lemma} 
   \label{thetabarfactor} 
   There is a homotopy commutative diagram 
   \[\diagram 
         S^{1}\ltimes S^{2n-1}\rto^-{\overline{\theta}}\dto^{e^{-1}} & S^{2n-1}\ddouble \\ 
         S^{2n}\vee S^{2n-1}\rto^-{t\cdot\eta+1} & S^{2n-1} 
      \enddiagram\] 
    where $t\in\mathbb{Z}/2\mathbb{Z}$ equals $1$ if $n$ is odd and $0$ if $n$ is even.~$\qqed$  
\end{lemma}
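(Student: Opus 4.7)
The plan is to lift the homotopy commutative square~(\ref{*actwheq}) through the bundle projection $q_0\colon S^{2n-1}\to \mathbb{C}P^{n-1}$. First I would substitute into the bottom edge the Bott--James--Sugawara computation~\cite{BJS} of $[\epsilon,q_0]\in\pi_{2n}(\mathbb{C}P^{n-1})$ already recorded in the paragraph preceding the lemma: this Whitehead product is null homotopic when $n$ is even and is homotopic to the composite $q_0\circ\eta$ when $n$ is odd, where $\eta$ generates $\pi_{2n}(S^{2n-1})\cong\mathbb{Z}/2\mathbb{Z}$. Packaging the parity into $t\in\mathbb{Z}/2\mathbb{Z}$, the composite along the bottom row of~(\ref{*actwheq}) therefore factors as
\[
[\epsilon,q_0]+q_0 \;\simeq\; q_0\circ(t\cdot\eta+1)\colon S^{2n}\vee S^{2n-1}\longrightarrow \mathbb{C}P^{n-1},
\]
so~(\ref{*actwheq}) asserts $q_0\circ\overline{\theta}\simeq q_0\circ(t\cdot\eta+1)\circ e^{-1}$ as maps $S^1\ltimes S^{2n-1}\to\mathbb{C}P^{n-1}$.

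Next I would promote this identity over $\mathbb{C}P^{n-1}$ to an honest homotopy in $S^{2n-1}$. The principal $S^1$-bundle $S^1\to S^{2n-1}\to \mathbb{C}P^{n-1}$ has homotopy long exact sequence with $\pi_k(S^1)=0$ for $k\geq 2$, so for $n\geq 2$ the map $q_{0\ast}\colon\pi_k(S^{2n-1})\to\pi_k(\mathbb{C}P^{n-1})$ is an isomorphism in degrees $k=2n-1$ and $k=2n$. Since $S^1\ltimes S^{2n-1}\simeq S^{2n}\vee S^{2n-1}$ is a wedge of spheres concentrated in exactly these two dimensions, a based map out of it to $S^{2n-1}$ is completely determined by its pair of restriction classes in $\pi_{2n}(S^{2n-1})\oplus\pi_{2n-1}(S^{2n-1})$. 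The two isomorphisms above therefore detect $\overline{\theta}\circ e^{-1}$ through post-composition with $q_0$.

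To conclude I would simply read off the two restrictions from the already-identified composite $q_0\circ\overline{\theta}\circ e^{-1}\simeq [\epsilon,q_0]+q_0$. On the $S^{2n-1}$ wedge summand, $[\epsilon,q_0]$ contributes nothing (it lives on the $S^{2n}$ summand), while $q_0$ restricts to $q_0$, which is $q_{0\ast}$ of the identity class; by injectivity of $q_{0\ast}$ in degree $2n-1$, the corresponding component of $\overline{\theta}\circ e^{-1}$ is the identity. On the $S^{2n}$ summand, the restriction is $[\epsilon,q_0]\simeq q_0\circ(t\cdot\eta)$, and injectivity of $q_{0\ast}$ in degree $2n$ forces the lifted class to be $t\cdot\eta$. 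Assembling the two restrictions yields $\overline{\theta}\circ e^{-1}\simeq t\cdot\eta+1$, which is the diagram in the statement.

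The only real obstacle is the appeal to~\cite{BJS} for the precise value of $[\epsilon,q_0]$; this is a nontrivial but entirely classical input, after which the rest is a routine homotopy-group argument exploiting the fact that the fibre $S^1$ is too low-dimensional to interfere in degrees $2n-1$ and $2n$.
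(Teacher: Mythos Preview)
Your proposal is correct and follows essentially the same approach as the paper: the paper also invokes~\cite{BJS} for the value of $[\epsilon,q_0]$ and then observes that $q_0$ induces isomorphisms on $\pi_{2n-1}$ and $\pi_{2n}$ to lift diagram~(\ref{*actwheq}) through $q_0$. You have spelled out in more detail exactly why those isomorphisms suffice (namely because $S^1\ltimes S^{2n-1}$ splits as $S^{2n}\vee S^{2n-1}$), but the argument is the same.
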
  

This leads to a condition for when~(\ref{partialVaction}) holds in the case of a blow up at a point. 

\begin{proposition} 
   \label{} 
   If $n$ is even then there is a homotopy commutative diagram 
   \[\diagram 
        S^{1}\times S^{2n-2}\rto^-{\theta}\dto^{\pi_{2}} & S^{2n-1}\dto^{i_{c}} \\ 
        S^{2n-1}\rto^-{i_{c}} & N_{0}. 
     \enddiagram\] 
   If $n$ is odd then this diagram homotopy commutes after localization away from $2$. 
\end{proposition}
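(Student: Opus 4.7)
The approach is to push both composites $i_c\circ\theta$ and $i_c\circ\pi$ (where $\pi$ projects onto the $S^{2n-1}$-factor) through the reduced half-smash $S^{1}\ltimes S^{2n-1}$, and to detect their difference on the $S^{2n}$-summand of the standard splitting $e\colon S^{2n}\vee S^{2n-1}\xrightarrow{\simeq}S^{1}\ltimes S^{2n-1}$. The reason both maps factor is that Lemma \ref{deltatriv} provides a right homotopy inverse to $\Omega q_{0}$, which forces the connecting map $\delta\colon S^{1}\to S^{2n-1}$ of the principal fibration $S^{1}\to S^{2n-1}\xrightarrow{q_{0}}\mathbb{C}P^{n-1}$ to be null homotopic. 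Hence the restriction of $\theta$ to the $S^{1}$-factor is null, so $\theta$ factors through $\overline{\theta}\colon S^{1}\ltimes S^{2n-1}\to S^{2n-1}$; the projection $\pi$ factors trivially through a map $\overline{\pi}$ of the same form.

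By Lemma \ref{thetabarfactor}, under the identification $e$ the reduced action $\overline{\theta}$ corresponds to the map $t\cdot\eta+1\colon S^{2n}\vee S^{2n-1}\to S^{2n-1}$, where $t=0$ when $n$ is even and $t=1$ when $n$ is odd, and $\eta$ is the generator of $\pi_{2n}(S^{2n-1})$. The reduced projection $\overline{\pi}$ corresponds to $0+1$, namely the trivial map on the $S^{2n}$-summand and the identity on the $S^{2n-1}$-summand. Composing with $i_c\colon S^{2n-1}\to N_{0}$, the two maps $i_c\circ\overline{\theta}$ and $i_c\circ\overline{\pi}$ agree on the $S^{2n-1}$-summand (both equal $i_c$) and differ on the $S^{2n}$-summand by the class $t\cdot(i_c\circ\eta)\in\pi_{2n}(N_{0})$.

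For $n$ even, $t=0$, the difference vanishes, and the diagram in the statement homotopy commutes. For $n$ odd, $t=1$, and it remains to kill $i_c\circ\eta\colon S^{2n}\to N_{0}$; this uses only that $\eta$ generates the $2$-torsion group $\pi_{2n}(S^{2n-1})\cong\mathbb{Z}/2$ for $n\geq 2$, so after localization away from $2$ the class $i_c\circ\eta$ becomes null homotopic, and the diagram homotopy commutes $p$-locally for every odd prime $p$. The main technical content has already been absorbed into Lemma \ref{thetabarfactor} (and, through it, into the Bott--James--Samelson computation of $[\epsilon,q_{0}]$); the only real task remaining is the comparison of factorizations through the half-smash, and the principal pitfall to guard against is careful bookkeeping of factor orders and basepoint conventions in $S^{1}\ltimes S^{2n-1}$ versus $S^{2n-1}\rtimes S^{1}$.
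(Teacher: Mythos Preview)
Your proposal is correct and follows essentially the same route as the paper: factor $\theta$ through $\overline{\theta}\colon S^{1}\ltimes S^{2n-1}\to S^{2n-1}$, invoke Lemma~\ref{thetabarfactor} to identify $\overline{\theta}\circ e$ with $t\cdot\eta+1$, and compare with the projection, which corresponds to $0+1$. The paper in fact concludes the slightly stronger statement $\theta\simeq\pi_{2}$ (before composing with $i_{c}$) since $t\cdot\eta$ already vanishes in $\pi_{2n}(S^{2n-1})$ for $n$ even or after inverting~$2$; your passage through $i_{c}\circ\eta\in\pi_{2n}(N_{0})$ is harmless but unnecessary.
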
 

\begin{proof} 
Consider the diagram 
\[\diagram 
       S^{1}\times S^{2n-1}\rto^-{\theta}\dto & S^{2n-1}\ddouble \\ 
       S^{1}\ltimes S^{2n-1}\rto^-{\overline{\theta}}\dto^{\pi} & S^{2n-1}\ddouble \\ 
       S^{2n-1}\rto^-{=} &  S^{2n-1}  
  \enddiagram\] 
where $\pi$ is the projection. The upper square homotopy commutes by definition of $\overline{\theta}$. 
If $n$ is even then the lower square homotopy commutes by Lemma~\ref{thetabarfactor}. The composite 
along the left column is the projection $\pi_{2}$. Thus $\theta\simeq\pi_{2}$. Now compose with $i_{c}$  
to get $i_{c}\circ\theta\simeq i_{c}\circ\pi_{2}$, as asserted. 

If $n$ is odd then the lower square in the diagram above homotopy commutes after localization 
away from $2$ by Lemma~\ref{thetabarfactor} since the map $\eta$ localizes trivially. Now proceed 
as in the even case.  
\end{proof}

Thus by Theorem \ref{loopNtilde} we recover the following result from~\cite{HT2} that was obtained using a more geometric argument. 

\begin{corollary} 
   \label{BJScor} 
   In the case of a complex blow up at a point and $n\geq 2$, if $n$ is even then there is a homotopy equivalence 
   \[\Omega(N\conn\mathbb{C}P^{n})\simeq S^{1}\times\Omega N_{0}\times\Omega\Sigma^{2} F\] 
   where $F$ is the homotopy fibre of the inclusion 
   \(\namedright{S^{2n-1}}{}{N_{0}}\). If $n$ is odd then this homotopy equivalence exists after 
   localizing away from~$2$.~$\qqed$ 
\end{corollary}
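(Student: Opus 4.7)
The plan is to deduce Corollary~\ref{BJScor} as an immediate application of Theorem~\ref{loopNtilde}, using the preceding proposition to verify the required homotopy commutativity. For a blow up at a point, Example~\ref{blowuppointex} identifies the pushout data as $\partial V = S^{2n-1}$, $N_c = N_0$, and $P\nu = \mathbb{C}P^{n-1}$, with attaching map $q_0\colon S^{2n-1}\to\mathbb{C}P^{n-1}$. So the homotopy fibre $F$ in Theorem~\ref{loopNtilde} agrees with the $F$ in the statement, namely the homotopy fibre of the boundary inclusion $S^{2n-1}\hookrightarrow N_0$.

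First I would note that the proposition just before the corollary establishes precisely the hypothesis~(\ref{partialVaction}) in this setting: namely, $i_c\circ\theta\simeq i_c\circ\pi_2$ for the canonical action $\theta\colon S^1\times S^{2n-1}\to S^{2n-1}$ arising from the principal bundle $S^1\to S^{2n-1}\to\mathbb{C}P^{n-1}$, holding integrally when $n$ is even and after localization away from $2$ when $n$ is odd. Next I would invoke Theorem~\ref{loopNtilde} with $N_c=N_0$ and $\partial V=S^{2n-1}$ to obtain the homotopy equivalence $\Omega(N\conn\mathbb{C}P^n)\simeq S^1\times\Omega N_0\times\Omega\Sigma^2 F$. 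The parity hypothesis propagates from the proposition to the corollary without further work: integral in the even case, away from $2$ in the odd case.

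There is no serious obstacle in this last step, since the technical work was already carried out in the build-up. The heart of the argument, and where all the real content lives, is Lemma~\ref{thetabarfactor}: the factorization of $\overline{\theta}\colon S^1\ltimes S^{2n-1}\to S^{2n-1}$ through $(t\cdot\eta+1)\colon S^{2n}\vee S^{2n-1}\to S^{2n-1}$. This factorization is what makes the vertical composition $i_c\circ\theta$ agree with $i_c\circ\pi_2$ (integrally when $t=0$, i.e.\ $n$ even; after inverting $2$ when $t=1$, i.e.\ $n$ odd, since then $\eta$ vanishes). Lemma~\ref{thetabarfactor} in turn rests on the diagram~(\ref{*actwheq}) supplied by Lemma~\ref{Pnuexamplelem}, the identification of $\gamma$ with the bottom cell inclusion via Remark~\ref{Pnuexampleremark}, and the classical Bott--Samelson computation of $[\epsilon,q_0]$ recorded in~\cite{BJS}. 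With all of this in hand, the proof of the corollary collapses to the two-line argument described above.
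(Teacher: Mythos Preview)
Your proposal is correct and follows exactly the paper's route: the preceding (unnamed) proposition verifies hypothesis~(\ref{partialVaction}) for the blow up at a point, and then Theorem~\ref{loopNtilde} delivers the decomposition, with the parity dependence coming from Lemma~\ref{thetabarfactor}. One small correction: the Whitehead product computation in~\cite{BJS} is due to Barratt--James--Stein, not Bott--Samelson.
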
 

There is an analogous result in the quaternionic case. 
Suppose $n=2s\geq 4$.
Start with the principal fibration 
\(\nameddright{S^{3}}{}{S^{4s-1}}{q_0}{\mathbb{H}P^{s-1}}\) 
induced by the map 
\(\namedright{\mathbb{H}P^{s-1}}{u}{\mathbb{H}P^{\infty}}\)
corresponding to a generator $u\in H^4(\mathbb{H}P^{s-1})$. 
Arguing as above, there is a homotopy commutative diagram 
\[\diagram 
      S^{3}\times S^{4s-1}\rto^-{\theta}\dto^{\pi_{2}} & S^{4s-1}\dto^{q_0} \\ 
      S^{4s-1}\rto^-{q_0} & \mathbb{H}P^{s-1} 
  \enddiagram\] 
if and only if the Whitehead product 
\(\namedright{\Sigma S^{3}\wedge S^{4s-2}}{[\epsilon,q_0]}{\mathbb{H}P^{s-1}}\) 
is null homotopic, where $\epsilon$ is the inclusion of the bottom cell. In~\cite{BJS} it was shown that $[\epsilon,q_0]$ is homotopic to the composite 
\(\lnameddright{S^{4s+2}}{(\pm s)\cdot\nu}{S^{4s-1}}{q_0}{\mathbb{H}P^{s-1}}\), 
where $\nu$ represents a generator of $\pi_{4s+2}(S^{4s-1})\cong\mathbb{Z}/24\mathbb{Z}$ for $s\geq 2$. 
On the other hand, a quaternionic version of Theorem \ref{loopNtilde} can be proved for $N\conn\mathbb{H}P^{s}$ by the same argument in Section \ref{sec:topblowup}. Thus we have the following, which refines a result in~\cite{HT2} that was stated only after localizing away from $2$ and $3$.  

\begin{corollary} 
   \label{BJScor2} 
   In the case of a quaternionic blow up at a point and $s\geq 2$, if $s\equiv 0\bmod{24}$ then 
   there is a homotopy equivalence 
   \[\Omega(N\conn\mathbb{H}P^{s})\simeq S^{3}\times\Omega N_{0}\times\Omega\Sigma^{4} F\] 
   where $F$ is the homotopy fibre of the inclusion 
   \(\namedright{S^{4s-1}}{}{N_{0}}\). If $s\not\equiv 0\bmod{24}$ then this homotopy equivalence 
   exists after localizing away from the primes dividing~$\frac{24}{(24,s)}$.
   ~$\qqed$ 
\end{corollary}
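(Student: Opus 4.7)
The plan is to parallel the complex case (Corollary \ref{BJScor}), substituting the loop space $S^{3} \simeq \Omega\mathbb{H}P^\infty$ for $S^{1} \simeq \Omega\mathbb{C}P^\infty$ throughout. First, I would establish a quaternionic analog of Theorem \ref{loopNtilde}: for the quaternionic blow up $\widetilde{N} = N \conn \mathbb{H}P^{s}$ at a point (with $n = 2s$), the canonical $S^{3}$-bundle $\widetilde{E}\to \widetilde{N}$ classified by a generator of $H^{4}(\widetilde{N})$ splits off an $S^{3}$ after looping (the analog of Lemma \ref{canbundletriv}, using an $S^{4}$-test map into $\widetilde{N}$ detecting the $H^{4}$ generator). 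Under the hypothesis $i_{c}\circ\theta \simeq i_{c}\circ\pi_{1}\colon S^{4s-1}\times S^{3}\to N_{0}$ for the relevant homotopy action $\theta$, the double application of Mather's Cube Lemma from Section~\ref{sec:topblowup} carries over verbatim, with $\Sigma^{2} F$ replaced by $S^{3}\ast F \simeq \Sigma^{4} F$, yielding $\Omega\widetilde{N} \simeq S^{3}\times\Omega N_{0}\times\Omega\Sigma^{4} F$.

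Second, I would verify the hypothesis on $\theta$. The relevant principal fibration is $S^{3}\to S^{4s-1}\stackrel{q_{0}}{\longrightarrow}\mathbb{H}P^{s-1}$, induced by $u\colon\mathbb{H}P^{s-1}\to\mathbb{H}P^{\infty}$. The inclusion of the bottom cell $S^{4}\hookrightarrow\mathbb{H}P^{s-1}$, followed by $u$, realises a generator of $H^{4}(\mathbb{H}P^{\infty})$, and looping produces a right homotopy inverse of $\Omega u\colon\Omega\mathbb{H}P^{s-1}\to S^{3}$ (quaternionic analog of Lemma~\ref{deltatriv}). Corollary~\ref{HTWhiteheadcor} then applies, with $\Sigma E = S^{4s-1} = \Sigma S^{4s-2}$, giving
\[
q_{0}\circ\overline{\theta} \simeq \bigl([\gamma, q_{0}] + q_{0}\bigr)\circ e^{-1}\colon S^{3}\ltimes S^{4s-1}\simeq S^{4s+2}\vee S^{4s-1}\longrightarrow \mathbb{H}P^{s-1},
\]
where $\gamma\colon S^{4}\to\mathbb{H}P^{s-1}$ is the inclusion of the bottom cell (by the quaternionic analog of Remark~\ref{Pnuexampleremark}, since $S^{4s-1}$ is $4$-connected for $s\geq 2$, so $u$ induces an isomorphism on $\pi_{4}$).

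Third, I would invoke the Bott--James--Sutherland formula \cite{BJS}: $[\gamma,q_{0}] \simeq q_{0}\circ((\pm s)\cdot\nu)$ with $\nu$ a generator of $\pi_{4s+2}(S^{4s-1})\cong\mathbb{Z}/24$ for $s\geq 2$. Hence $[\gamma,q_{0}]$ is null-homotopic integrally exactly when $24\mid s$, that is $48\mid n$; otherwise it is null-homotopic after inverting the primes dividing the order of $s$ in $\mathbb{Z}/24$, namely $24/(24,s) = 48/(48,n)$. With $[\gamma,q_{0}]$ null-homotopic, the quaternionic analog of Lemma~\ref{thetabarfactor} shows that $\overline{\theta}$ is homotopic to the projection, and the argument preceding Corollary~\ref{BJScor} (composing with $i_{c}$, using the universal property of the half-smash) yields $i_{c}\circ\theta \simeq i_{c}\circ\pi_{1}$. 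Invoking the quaternionic Theorem~\ref{loopNtilde} from the first paragraph then produces the stated decomposition of $\Omega(N\conn\mathbb{H}P^{s})$.

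The main obstacle I anticipate is setting up the quaternionic analog of Theorem~\ref{loopNtilde} rigorously: one must verify that the shearing/associativity argument in Lemma~\ref{2tildeElemma} passes through for an $S^{3}$-action (routine, since $S^{3}$ is a topological group), and that the composite $S^{4}\to\widetilde{N}\to\mathbb{H}P^{\infty}$ coming from the $\mathbb{H}P^{s}$-summand realises the $H^{4}$ generator so that the canonical $S^{3}$-bundle splits after looping. Once this formal framework is in place, the whole computation reduces to the Whitehead-product calculation summarised above.
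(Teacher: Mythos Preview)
Your proposal is correct and follows essentially the same approach as the paper: set up a quaternionic analogue of Theorem~\ref{loopNtilde} by rerunning Section~\ref{sec:topblowup} with $S^{3}$ in place of $S^{1}$, then verify the action hypothesis by reducing to the Whitehead product $[\epsilon,q_{0}]$ via Corollary~\ref{HTWhiteheadcor} and invoking the calculation from \cite{BJS} (Barratt--James--Stein, not Bott--James--Sutherland) that $[\epsilon,q_{0}]\simeq q_{0}\circ((\pm s)\cdot\nu)$ with $\nu$ of order $24$.
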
 

%

\bibliographystyle{amsalpha}

\end{sloppypar}
\end{document}